\theoremstyle{plain}
\newtheorem{theorem}{Theorem}[section]
\newtheorem{assumption}[theorem]{Assumption}
\newtheorem{lemma}[theorem]{Lemma}
\newtheorem{corollary}[theorem]{Corollary}
\newtheorem{proposition}[theorem]{Proposition}
\theoremstyle{remark}
\newtheorem{remark}[theorem]{Remark}
\def\R{{\mathbb R}}
\def\N{{\mathbb N}}
\def\Z{{\mathbb Z}}
\def\T{{\mathbb T}}
\def\({\left(}
\def\){\right)}
\def\<{\left\langle}
\def\>{\right\rangle}
\def\Tend#1#2{\mathop{\longrightarrow}\limits_{#1\rightarrow#2}}
\def\eps{\varepsilon}
\def\op{{\rm op}}
\DeclareMathOperator{\loc}{loc}
\DeclareMathOperator{\supp}{supp}
\DeclareMathOperator{\Tr}{Tr}
\newcommand{\To}{\longrightarrow}
\numberwithin{equation}{section}
\begin{document}

\title[]{Wigner measures and effective mass theorems}
\author[V. Chabu]{Victor Chabu}
\thanks{V. Chabu was supported by the grant 2017/13865-0, São Paulo Research Foundation (FAPESP). F. Macià has been supported by grants StG-2777778 (U.E.) and MTM2013-41780-P, MTM2017-85934-C3-3-P, TRA2013-41096-P (MINECO, Spain).}
\address{Universidade de São Paulo, IF-USP, DFMA, CP 66.318 05314-970, São Paulo, SP, Brazil}
\email{vbchabu@if.usp.br}
\author[C. Fermanian]{Clotilde~Fermanian-Kammerer}
\address{LAMA, UMR CNRS 8050,
Universit\'e Paris EST\\
61, avenue du G\'en\'eral de Gaulle\\
94010 Cr\'eteil Cedex\\ France}
\email{Clotilde.Fermanian@u-pec.fr}
\author[F. Maci\`{a}]{Fabricio Maci\`{a}}
\address{Universidad Polit\'{e}cnica de Madrid. ETSI Navales. Avda. de la Memoria 4, 28040 Madrid, Spain}
\email{fabricio.macia@upm.es}

\begin{abstract}
We study a Schr\"odinger equation which describes the dynamics of an electron in a crystal in the presence of impurities. We consider the regime of small wave-lengths comparable to the characteristic scale of  the crystal. 
It is well-known that under suitable assumptions on the initial data and for highly oscillating potentials, the wave function can be approximated  by the solution of a simpler equation, the effective mass equation. 
Using Floquet-Bloch decomposition, as it is classical in this subject, we establish effective mass equations in a rather general setting. In particular,  Bloch bands  are allowed to have  degenerate critical points, as  may occur in dimension strictly larger than one.
Our analysis leads to a new type of effective mass equations which are operator-valued  and of Heisenberg form and relies on Wigner measure theory and, more precisely, to its applications to the analysis of dispersion effects. \\

\noindent {\bf Keywords}: {Bloch modes, semi-classical analysis on manifolds, Wigner measures, two-microlocal measures, Effective mass theory.}
\end{abstract}

\maketitle

\section{Introduction} 

\subsection{The dynamics of an electron in a crystal and the effective mass equation}
The dynamics of an electron in a crystal in the presence of impurities is described by a wave function $\Psi(t,x)$ that solves the Schrödinger equation:
\begin{equation}\label{eq:schrodbrut}
\left\{
\begin{array}{l}
i\hbar\partial_t \Psi(t,x) +\dfrac{\hbar^2}{2m} \Delta_x \Psi(t,x) - Q_{\rm per}\left(x\right)\Psi(t,x) - Q_{\rm ext}(t,x)\Psi(t,x) =0,\vspace{0.2cm}\\
\Psi |_{t=0}=\Psi_0.
\end{array}\right.
\end{equation}
The potential $Q_{\rm per}$ is periodic with respect to some lattice in $\R^d$ and describes the interactions between the electron and the crystal. The external potential $Q_{\rm ext}$ takes into account the effects of impurities on the otherwise perfect crystal. Here $\hbar$ denotes the Planck constant and $m$ is the mass of the electrons. In many cases of physical interest, the ratio $\eps$ between the mean spacing of the lattice and the characteristic length scale of variation of $Q_{\rm ext}$ is very small. After performing a suitable change of units, and rescaling the external potential and the wave function (see for instance \cite{PR96}) the Schrödinger equation becomes:
\begin{equation}\label{eq:schro}
\left\{
\begin{array}{l}
i\partial_t \psi^\eps (t,x)+\dfrac{1}{2} \Delta_x \psi^\eps(t,x) - \dfrac{1}{\eps^{2}} V_{\rm per}\left(\dfrac{x}{\eps}\right)\psi^\eps(t,x) - V_{\rm ext}(t,x)\psi^\eps(t,x) =0,\vspace{0.2cm} \\
\psi^\eps |_{t=0}=\psi^\eps_0.
\end{array}\right.
\end{equation}
The potential $V_{\rm per}$ is periodic with respect to a fixed lattice in $\R^d$, which, for the sake of definiteness will be assumed to be $\Z^d$.

\medskip

Effective Mass Theory consists in showing that, under suitable assumptions on the initial data $\psi^\eps_0$, the solutions of \eqref{eq:schro} can be approximated for $\eps$ small by those of a simpler Schrödinger equation, the \textit{effective mass equation}, which is of the form:
\begin{equation}\label{eq:embrut}
i\partial_t \phi (t,x)+\dfrac{1}{2} \langle B\, D_x,D_x\rangle\phi(t,x) - V_{\rm ext}(t,x)\phi(t,x)=0,
\end{equation}
where, as usual, $D_x={1\over i}\partial_x$.
The approximation has to be understood in the sense that any weak limit of the density $|\psi^\eps(t,x)|^2dxdt$ is  the density $|\phi(t,x)|^2dxdt$ as $\eps$ goes to~$0$. In equation~(\ref{eq:embrut}), $B$ is a $d\times d$ matrix called the \textit{effective mass tensor}; it generates the {\it effective Hamiltonian}
$$H_{\rm eff}(x,\xi) = {1\over 2}B\xi\cdot \xi + V_{\rm ext}(t,x).$$

The effective mass tensor is an experimentally accessible quantity that can be used to study the effect of the impurities on the dynamics of the electrons. Both the question of finding those initial conditions for which the corresponding solutions of \eqref{eq:schro} converge (in a suitable sense) to solutions to the effective mass equation and that of clarifying the dependence of $B$ on the sequence of initial data have been extensively studied in the literature \cite{BLP78, PR96, AP05, HW11, BBA11}. The effective mass tensor is related to the critical points of the {\it Bloch modes}. These are  the eigenvalues of the operator $P(\xi)$ on $L^2(\T^d)$ which is canonically  associated with the equation~(\ref{eq:schro}),
\begin{equation}\label{def:P}
P(\xi)={1\over 2} |\xi-i\nabla_y|^2 +V_{\rm per}(y),\;\; y\in\T^d,\;\;\xi\in\R^d.
\end{equation}

We focus here on initial data which are structurally related with one of the  Blochs modes in a sense that we will make precise later; we assume that this Bloch mode is of constant multiplicity and we introduce a new method for deriving rigorously equation~(\ref{eq:embrut}). The advantage of this method is that it allows to treat the case where the critical points of the considered Bloch modes are degenerate, leading to the introduction of a new family of Effective mass equations which are of Heisenberg type. 
Our strategy is based on the analysis of the dispersion of PDEs by a Wigner measure approach which has led us to develop global {\it two microlocal Wigner measures} in this specific context, while they are only defined locally in general (\cite{F2micro:00,F2micro:05}).  

\medskip 

Note that different scaling limits  for equation~(\ref{eq:schrodbrut}) have been studied in the literature: 
the interested reader can consult, among many others, references \cite{Ge91,GMMP,PR96, HST:01,BMP01,AP06,CS12,PST:03,DGR06}.

\subsection{Floquet-Bloch decomposition}\label{sec:FB}

The analysis of Schrödinger operators with periodic potentials has a long history that has its origins in the seminal works by Floquet \cite{Floquet1883} on ordinary differential equations with periodic coefficients, and by Bloch \cite{Bloch28}, who developed a spectral theory of periodic Schrödinger operators in the context of solid state physics. Floquet-Bloch theory can be used to study the spectrum of the perturbed periodic Schrödinger operator:
$$-\frac{\eps^2}{2}\Delta_x+V_{\rm per}\left(\frac{x}{\eps}\right)+\eps^2V_{\rm ext}(t,x), $$
see for instance  \cite{RS:V4,Kuch01,Kuch04,Kuch16} and the references therein, and \cite{Out:87,GMS91,HW11} for results in the semiclassical context. The Floquet-Bloch decomposition gives as a result that the corresponding Schrödinger evolution can be decoupled in an infinite family of dispersive-type equations for the so-called \emph{Bloch modes}. We briefly recall the basic facts that we shall need by following the approach in \cite{GMS91,Ge91}. 

\medskip 

The Floquet-Bloch decomposition is based on assuming that the solutions to \eqref{eq:schro} depend on both the ``slow'' $x$ and the ``fast'' $x/\eps$ variables. The fast variables should moreover respect the symmetries of the lattice. This leads to the following Ansatz on the form of the solutions $\psi^\eps$ of~\eqref{eq:schro}:
\begin{equation}\label{def:U}
\psi^\eps(t,x)=U^\eps\left(t,x,\frac{x}{\eps}\right),
\end{equation}
where $U^\eps(t,x,y)$ is assumed to be $\Z^d$-periodic with respect to the variable $y$ (and, therefore, that it can be identified to a function defined on $\R\times\R^d\times\T^d$, where $\T^d$ denotes the torus $\R^d \slash \mathbb{Z}^d$). The function $U^\eps$ then satisfies the  equation:  
\begin{equation}\label{eq:U}
\left\{\begin{array}{l}
i\eps^2\partial_tU^\eps (t,x,y)=  P(\eps D_x) U^\eps (t,x,y) +\eps^2 V_{\rm ext}(t,x) U^\eps(t,x,y),\vspace{0.2cm}\\
U^\eps|_{t=0}= U^\eps_0(x,y), \;\;{\rm such\;\; that}\;\;\psi_{0}^{\eps}=L^\eps U^\eps_0,\end{array}\right.
\end{equation}
where the operator $L^\eps$ maps functions $F$ defined on $\R^d\times \T^d$ on functions on $\R^d$ according to:
\begin{equation}\label{def:Leps}
L^\eps F(x):=F\left(x,\frac{x}{\eps}\right),
\end{equation}
and $P(\eps D_x)$ denotes the operator-valued Fourier multiplier associated with the symbol $\xi\mapsto P(\eps \xi)$ defined in~(\ref{def:P}).
The initial condition in \eqref{eq:U} can be interpreted in terms of the natural embedding $L^2(\R^d_x)\hookrightarrow L^2(\R^d_x\times \T^d_y)$ by taking 
$U^\eps_0(x,y) = \psi^\eps_0(x) \otimes \mathds{1}(y) .$ One can also have more elaborated identifications depending on the structure of the initial data, as we shall see later. 
Identity~\eqref{def:U} makes sense, since one can check that, under suitable assumptions on the initial datum, $U^\eps(t,x,\cdot)$ has enough regularity with respect to the variable $y$ (the fact that $\psi^\eps$ must be given by~\eqref{def:U} following from the uniqueness of solutions to the initial value problem \eqref{eq:schro}).

\medskip 

Assuming that the function $y\mapsto V_{\rm per}(y)$ is smooth is enough for proving that the operator $P(\xi)$  is self-adjoint on $L^2(\T^d)$ (with domain $H^2(\R^d)$) and has a compact resolvent. For the sake of simplicity, we shall make here this assumption, even though it can be relaxed into assuming $V_{\rm per}\in L^p (\T^d)$ for some  convenient set of indices~$p$ which authorizes Coulombian singularity in dimension~$3$ (see~\cite{lewin_lecture}).
As a consequence of the fact that $P(\xi)$ has compact resolvent,  there exist a non-decreasing sequence of eigenvalues (the so-called \textit{Bloch energies}):
$$\varrho_1(\xi)\leq \varrho_2(\xi)\leq \cdots\leq \varrho_n(\xi) \leq \cdots \longrightarrow +\infty,$$
and an orthonormal basis of $L^2(\T^d)$ consisting of eigenfunctions $\left(\varphi_n(\xi,\cdot)\right)_{n\in\N}$ (called \textit{Bloch waves}): 
$$P(\xi)\varphi_n(y,\xi)=\varrho_n(\xi)\varphi_n(y,\xi),\quad \text{ for }y\in\T^d.$$
Moreover, the Bloch energies $\varrho_n(\xi)$ are $2\pi\Z^d$-periodic whereas the Bloch waves satisfy $$\varphi_n(y,\xi+2\pi k)=e^{-i2\pi k\cdot y}\varphi_n(y,\xi),\quad \text{ for every } k\in\Z^d.$$ 
This follows from the fact that for every $k\in\Z^d$, the operator $P(\xi+2\pi k)$ is unitarily equivalent to $P(\xi)$ since $P(\xi + 2\pi k) = {\rm e}^{-i2\pi k\cdot y}P(\xi){\rm e}^{i2\pi k\cdot y}$.
It is proved in \cite{Wilcox78} that the Bloch energies $\varrho_n$ are continuous and piecewise analytic functions of $\xi\in\R^d$. Actually, the set~$\{(\xi,\varrho_n(\xi)),\;n\in\N,\;\;\xi\in\R^d \}$ is an analytic set of $\R^{2d}$. Moreover, if the multiplicity of the eigenvalue~$\varrho_n(\xi)$ is equal to the same constant for all $\xi\in\R^d$, then $\varrho_n$  and the eigenprojector~$\Pi_n$ on this mode are  globally analytic functions of $\xi$. The reader can refer to~\cite{Kuch16} for a survey on the subject.

\medskip 

Observing that, via the decomposition in Fourier series, any function $U\in L^2(\R^d_x\times\T^d_y)$ can be written as:
\[
U(x,y)=\sum_{k\in\Z^d}U_k(x){\rm e}^{i2\pi k\cdot y}\;\;{\rm 
with}\;\;
\|U\|_{L^2(\R^d\times\T^d)}^2=\sum_{k\in\Z^d}\|U_k\|_{L^2(\R^d)}^2,
\]
we denote by $H^s_\eps(\R^d\times\T^d)$, for $s\geq 0$, the Sobolev space consisting of those functions $U \in L^2(\R^d\times\T^d)$ such that there exists $C>0$ and: 
\begin{equation}\label{def:Hseps}
\forall \eps>0,\;\;\|U\|_{H^s_\eps(\R^d\times\T^d)}^2:=\sum_{k\in\Z^d}\int_{\R^d}(1+|\eps\xi|^2+|k|^2)^s|\widehat{U_k}(\xi)|^2d\xi\leq C,
\end{equation}
where 
$\displaystyle{\widehat{U_k}(\xi)= \int_{\R^d} {\rm e}^{-ix\cdot \xi} U_k(x) dx.}$

\subsection{Main result}
We consider the following set of assumptions. 

\begin{assumption} \label{hypothesis:main}
\begin{enumerate}
\item Assume $V_{\rm per}$ is smooth and real-valued
and that  $V_{\rm ext}$ is a continuous function in time taking values in the set of smooth, real-valued,  bounded functions on~$\R^d$ with bounded derivatives.
\item  Assume that $\varrho_n$ is  a Bloch mode 
 of constant multiplicity and that the set of critical points of $\varrho_n$ 
  $$\Lambda_n:=\{\xi\in\R^d,\;\; \nabla \varrho_n(\xi)=0\}$$
  is a submanifold  of $\R^d$.
  \item Assume that  the Hessian $d^2 \varrho_n(\xi)$ is of maximal rank above each point $\xi\in\Lambda_n$  (or equivalently that  ${\rm Ker} \, d^2 \varrho_n(\xi)=  T_\xi \Lambda_n$ for all $\xi\in\Lambda_n$).
 \item Assume that the initial data $\psi^\eps_0(x)$ satisfies 
 $$
 \psi^\eps_0(x)= U^\eps_0\left(x,{x\over \eps}\right)\;\; {\rm with} \;\; \widehat U^\eps_0(\xi, \cdot)\in {\rm Ran} \,\Pi_n(\eps \xi),$$
with  $U^\eps_0$  uniformly bounded in $H^s_\eps(\R^d\times\T^d)$ for some $s>d/2$. 
\end{enumerate}
 \end{assumption}

It will be convenient to identify $\varrho_n$ to a function defined on $(\R^d)^*$ rather than $\R^d$ (via the standard identification given by duality). Then we define the cotangent bundle of $\Lambda_n$ as the union of all cotangent spaces to $\Lambda_n$
\begin{equation}\label{def:T*X}
T^*\Lambda_n:=\{(x,\xi)\in \R^d\times \Lambda_n \, : \, x \in T_\xi^* \Lambda_n \},
\end{equation}
each fibre $T_\xi^*\Lambda_n$ is the dual space of the tangent space $T_\xi\Lambda_n$. Note that this is well-defined, since $T_\xi^*\Lambda_n\subset (\R^d)^{**}=\R^d$. We 
shall denote by $\mathcal{M}_+(T^*\Lambda_n)$ the set of positive Radon measures on~$T^*\Lambda_n$. We
also define the normal bundle of $\Lambda_n$ which is the union of those linear subspaces of~$\R^d$ that are normal to $\Lambda_n$:
\begin{equation}\label{def:NX}
N\Lambda_n:=\{(z,\xi)\in \R^d\times \Lambda_n \, : \, z \in N_\xi \Lambda_n \},
\end{equation}
 where $N_\xi \Lambda_n$ consists of those $x\in(\R^d)^{**}=\R^d$ that annihilate $T_\xi \Lambda_n$.
Every point $x\in\R^d$ can be uniquely written as $x=v+z$, where $v\in T^*_\xi\Lambda_n$ and $z\in N_\xi\Lambda_n$. Given a function $\phi\in L^\infty(\R^d)$ we write $m_\phi(v,\xi)$, where $v\in T^*_\xi\Lambda_n$, to denote the operator acting on $L^2(N_\xi\Lambda_n)$ by multiplication by $\phi(v+\cdot)$.   Note that assumption (3) implies that the Hessian of $\varrho_n$ defines an operator $d^2\varrho_n(\xi)D_z\cdot D_z$ acting on $L^2(N_\xi\Lambda_n)$ for any $\xi\in\Lambda_n$. 

\medskip 

In the statement below, 
the weak limit of the energy density are described by means of 
a time-dependent family $M_n$ of trace-class operators acting on a certain $L^2$-space. More precisely, the operators $M_n$ depend on $t\in\R$ and on $\xi\in\Lambda_n$, $v\in T^*_\xi\Lambda_n$; for every choice of these parameters, $M_n(t,v,\xi)$ is a trace-class operator acting on  $L^2$ functions of the vector space~$N_\xi\Lambda$.  Note that $M_n(t,\cdot)$ can also be viewed as a section of a vector bundle over $T^*\Lambda_n$, namely: $\bigsqcup_{(v,\xi)\in T^*\Lambda}{\mathcal L}^1_+\left(L^2(N_\xi\Lambda_n)\right)$.

\begin{theorem}\label{mainresult}
Assume the hypotheses of Assumption~\ref{hypothesis:main}.  Then, 
there exist a subsequence $(\eps_k)_{k\in\N}$, a positive measure $\nu_n\in\mathcal{M}_+(T^*\Lambda_n)$, and a measurable family of self-adjoint, positive, trace-class operator 
$$M_{0,n}:T^*_\xi\Lambda_n\ni (v,\xi)\longmapsto M_{0,n}(v,\xi)\in \mathcal{L}_+^1(L^2(N_\xi\Lambda_n)),\quad \Tr_{L^2(N_\xi\Lambda_n)} M_{0,n}(v,\xi)=1,$$ 
such that for every 
for every $a<b$ and every $\phi\in \mathcal{C}_c(\R^d)$ one has:
$$\lim_{k\to\infty}\int_a^b\int_{\R^d}\phi(x)|\psi^{\eps_k}(t,x)|^2dxdt
=\int_a^b \int_{T^*\Lambda_n}\Tr_{L^2(N_\xi\Lambda_n)}\left[m_\phi(v,\xi)M_n(t,v,\xi)\right]\nu_n(dv,d\xi)dt,
$$
where $M_n(\cdot, v,\xi)\in \mathcal{C}(\R;\mathcal{L}_+^1(L^2(N_\xi\Lambda_n))$ solves the Heisenberg equation:
\begin{equation}\label{eq:heis}
\left\{ \begin{array}{l}
i\partial_t M_n(t,v,\xi) +\left[\dfrac{1}{2} d^2\varrho_n(\xi)D_z\cdot D_z + m_{V_{\rm ext}(t,\,\cdot\,)}(v,\xi), M_n(t,v,\xi)\right] =0,\vspace{0.2cm}\\
M_n |_{t=0}=M_{0,n}.
\end{array} \right.
\end{equation}
\end{theorem}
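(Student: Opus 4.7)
The plan is to lift~(\ref{eq:schro}) to the two-scale equation~(\ref{eq:U}), project onto the $n$-th Bloch mode, and analyze the resulting evolution via an operator-valued two-microlocal Wigner transform concentrated on the critical submanifold $\Lambda_n$. First, since $\widehat{U^\eps_0}(\xi,\cdot)\in{\rm Ran}\,\Pi_n(\eps\xi)$ and $\Pi_n(\eps D_x)$ is smooth in $\xi$ (by constant multiplicity) and commutes with $P(\eps D_x)$, I would show by an energy estimate in $H^s_\eps$ that $U^\eps(t,\cdot)$ remains at distance $o(1)$ from ${\rm Ran}\,\Pi_n(\eps D_x)$: the only obstruction to exact conservation is the commutator $[\Pi_n(\eps D_x),V_{\rm ext}(t,x)]$, which is $O(\eps)$ by pseudodifferential calculus. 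On the projection, the equation reads
$$i\partial_t U^\eps=\frac{1}{\eps^2}\varrho_n(\eps D_x)U^\eps+V_{\rm ext}(t,x)U^\eps+o(1).$$

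Next I would introduce a two-microlocal Wigner transform adapted to $\Lambda_n$. The standard $\eps$-Wigner transform of $\psi^\eps$ concentrates on $\{\eps\xi\in\Lambda_n\}$, and its $x$-marginal loses information about the oscillations in the normal directions. I would therefore, for each $\xi\in\Lambda_n$, zoom in at the secondary scale $\eps$ by writing $\eps D_x=\xi+\eps D_z$ along the normal directions, and test $U^\eps$ against observables $a(v,\xi,z,D_z)$ with $(v,\xi)\in T^*\Lambda_n$ (semiclassical) and $z\in N_\xi\Lambda_n$ (full scale). A Calder\'on--Vaillancourt-type estimate gives uniform bounds; a Banach--Alaoglu argument in trace-class-valued distributions extracts a subsequence along which the transform converges to $\nu_n(dv,d\xi)\otimes M_n(t,v,\xi)$, with $M_n$ a positive trace-class operator on $L^2(N_\xi\Lambda_n)$. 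The reconstruction of $|\psi^\eps|^2$ comes from the identity $\phi(x)=\phi(v+z)$: in the two-microlocal limit, multiplication by $\phi$ on the $z$-fiber is precisely $m_\phi(v,\xi)$, yielding the claimed integral representation.

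The Heisenberg equation is obtained by differentiating the two-microlocal transform in $t$ and passing to the limit. Taylor-expanding around $\xi\in\Lambda_n$ and using that $\nabla\varrho_n|_{\Lambda_n}=0$ with ${\rm Ker}\,d^2\varrho_n(\xi)=T_\xi\Lambda_n$,
$$\frac{1}{\eps^2}\varrho_n(\xi+\eps\eta)=\frac{\varrho_n(\xi)}{\eps^2}+\frac{1}{2}d^2\varrho_n(\xi)\eta_N\cdot\eta_N+O(\eps|\eta|^3),$$
where $\eta_N$ is the projection of $\eta$ onto $N_\xi\Lambda_n$. The constant $\eps^{-2}\varrho_n(\xi)$ is a global phase on the $n$-th band and cancels in the commutator. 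The potential $V_{\rm ext}(t,x)$ acts on the $z$-fiber, after the two-microlocal restriction, as $m_{V_{\rm ext}(t,\cdot)}(v,\xi)$. Symbolic calculus produces commutators and yields~(\ref{eq:heis}) in the distributional sense; existence, uniqueness, and trace-preservation for this Heisenberg flow extend the identity to every $t$. The initial datum $M_{0,n}$ is the two-microlocal transform of $U^\eps_0$, whose positivity and unit trace follow from the projection hypothesis on $\Pi_n(\eps\xi)$ and the $H^s_\eps$ bound.

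The principal obstacle is the construction of a \emph{global} two-microlocal Wigner measure valued in trace-class operators on the fibers $L^2(N_\xi\Lambda_n)$: the existing theory in~\cite{F2micro:00,F2micro:05} is developed locally near a single point, whereas here $\Lambda_n$ is a manifold whose normal bundle varies with~$\xi$. One must patch local constructions via a partition of unity subordinate to a tubular neighborhood of $\Lambda_n$, control the error terms in $\eps$ uniformly in the base point, and verify that the resulting section of $\bigsqcup_{(v,\xi)\in T^*\Lambda_n}\mathcal{L}^1_+(L^2(N_\xi\Lambda_n))$ is well-defined and measurable. A secondary difficulty is controlling interband transitions: the $O(\eps)$ commutator with $V_{\rm ext}$ must be shown not to leak mass out of the $n$-th band on $O(1)$ time scales, which relies crucially on constant multiplicity, so that $\Pi_n$ is smooth and a gap can be exploited.
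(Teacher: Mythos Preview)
Your outline matches the paper's strategy almost exactly: reduce to the $n$-th Bloch component via the adiabatic estimate $[\Pi_n(\eps D_x),V_{\rm ext}]=O(\eps)$, pass to the scalar dispersive equation with symbol $\varrho_n(\eps D_x)$, and analyze it with a two-microlocal Wigner construction over $\Lambda_n$. You also correctly flag the global construction of the operator-valued measure as the main technical novelty.

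There is, however, one genuine gap. Your Banach--Alaoglu step tacitly assumes that all the second-microlocal mass is captured by a trace-class operator on $L^2(N_\xi\Lambda_n)$, i.e.\ by the ``compact part'' of the two-microlocal measure. But when you zoom in at scale $\eps$ in the normal directions, mass can escape to infinity at this second scale: frequencies with $|\eps\xi-\sigma(\eps\xi)|\gg\eps$ yet $\to 0$ are not seen by compact observables in $(z,D_z)$. In the paper this is handled by compactifying the normal fibre and obtaining, in addition to $M_n\nu_n$, a measure $\gamma_t$ on the sphere bundle $S\Lambda_n$. One must then show $\gamma_t=0$; this is a separate dispersion argument: $\gamma_t$ is invariant under the flow $(x,\sigma,\omega)\mapsto (x+s\,d^2\varrho_n(\sigma)\omega,\sigma,\omega)$ on $S\Lambda_n$, and Assumption~\ref{hypothesis:main}(3) --- maximal rank of $d^2\varrho_n$ on $N_\sigma\Lambda_n$ --- guarantees this flow has no stationary points, forcing $\gamma_t=0$ by finiteness of the measure. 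Your Taylor expansion uses the Hessian condition only to identify the generator of the Heisenberg equation; its second, and equally essential, role in killing the part at infinity is missing from your sketch. Without it, the right-hand side of the limiting identity could fall short of the full mass.
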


\begin{remark}
We point out that the measure $\nu_n$ and the family of operators $M_{0,n}$ only depend on the subsequence $\psi^{\eps_k}_0$ of initial data. The way of computing them  will be made clear in Section~\ref{sec:4}.
\end{remark}

When the critical points of $\varrho_n(\xi)$ are all non degenerate, then $\Lambda_n$ is discrete and $2\pi\Z^d$-periodic, $T^*\Lambda_n=\Lambda_n\times \{0\}$  and  $N\Lambda_n= \R^d$. We then have the following corollary.

\begin{corollary}\label{mainresul:case1}
Assume we have Assumption~\ref{hypothesis:main} and that the critical points of $\varrho_n(\xi)$ are all non degenerate.
Then the measure $\nu_{n}$ and the operator $M_{n}$ of Theorem~\ref{mainresult} above satisfy:
\begin{enumerate}
\item The operator $M_{n}(t,\xi)$ is the orthogonal projection on
$\psi_\xi$ which  solves the effective mass equation:
\begin{equation}\label{eq:schroh}
i\partial_t \psi_\xi(t,x) ={1\over 2}d^2\varrho_{n}(\xi)D_x\cdot D_x \psi_\xi(t,x) +V_{\rm ext}(t,x)\psi_\xi(t,x),
\end{equation}
with initial data:
\begin{center}
$\psi_\xi|_{t=0}$ is the weak limit in $L^2(\R^d)$ of the sequence $\left({\rm e}^{-\frac{i}{\eps_k} \xi \cdot x} \psi^{\eps_k}_{0}\right)$.
\end{center}
\item The measure $\nu_{n}$ is given by 
$$\nu_{n}=\sum_{\xi\in \Lambda_{n}}\alpha_\xi \delta_\xi,\;\; \alpha_\xi = \| \psi_\xi|_{t=0}\|_{L^2(\R^d)}.$$
\end{enumerate}
\end{corollary}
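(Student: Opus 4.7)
My plan is to specialize Theorem~\ref{mainresult} to the discrete setting. Under the extra assumption that all critical points of $\varrho_n$ are non-degenerate, the set $\Lambda_n$ consists of isolated points (and is $2\pi\Z^d$-periodic by periodicity of $\varrho_n$). For each $\xi\in\Lambda_n$ we have $T_\xi\Lambda_n=\{0\}$, hence $T^*_\xi\Lambda_n=\{0\}$ and $N_\xi\Lambda_n=\R^d$. The cotangent bundle $T^*\Lambda_n$ identifies with $\Lambda_n$, so any $\nu_n\in\mathcal{M}_+(T^*\Lambda_n)$ is necessarily of the form $\sum_{\xi\in\Lambda_n}\alpha_\xi\delta_\xi$; the variable $v$ disappears and $M_n(t,\xi)$ becomes a one-parameter family of self-adjoint, positive, trace-one operators on $L^2(\R^d)$. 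Moreover $m_{V_{\rm ext}(t,\cdot)}(0,\xi)$ is simply multiplication by $V_{\rm ext}(t,\cdot)$ on $L^2(\R^d)$, so~\eqref{eq:heis} reduces to the genuine Heisenberg equation
$$i\partial_t M_n(t,\xi)+\bigl[H(t,\xi),\,M_n(t,\xi)\bigr]=0,\qquad H(t,\xi):=\tfrac{1}{2}\,d^2\varrho_n(\xi)D_z\cdot D_z+V_{\rm ext}(t,z).$$

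The second step exploits the structure of this equation. Since $H(t,\xi)$ is self-adjoint on $L^2(\R^d)$ (with domain the Schwartz class extended by the usual Kato argument, as $V_{\rm ext}$ is bounded with bounded derivatives), it generates a unitary propagator $U(t;\xi)$, and the unique trace-class solution is $M_n(t,\xi)=U(t;\xi)M_{0,n}(\xi)U(t;\xi)^*$. Conjugation by a unitary preserves rank and spectrum, so a rank-one orthogonal projection remains a rank-one orthogonal projection for all $t$. Consequently, once we know that $M_{0,n}(\xi)$ is the orthogonal projection onto a single line $\C\cdot\psi_\xi|_{t=0}$, we immediately get $M_n(t,\xi)=|\psi_\xi(t)\rangle\langle\psi_\xi(t)|/\|\psi_\xi|_{t=0}\|^2$ where $\psi_\xi(t):=U(t;\xi)\psi_\xi|_{t=0}$ solves~\eqref{eq:schroh}.

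The main obstacle is the identification of the initial operator $M_{0,n}(\xi)$ and of the weight $\alpha_\xi$, which genuinely requires the construction carried out in Section~\ref{sec:4}. Heuristically, the assumption that $\widehat{U_0^\eps}(\xi,\cdot)\in\mathrm{Ran}\,\Pi_n(\eps\xi)$ localises the initial data on the $n$-th Bloch band, and the two-microlocal analysis at scale $\eps$ at an isolated $\xi_0\in\Lambda_n$ selects the Fourier content of $\psi_0^\eps$ concentrated near $\xi/\eps\sim\xi_0/\eps$. The corresponding microlocal profile is captured exactly by the weak $L^2$-limit of the demodulated sequence $e^{-i\xi_0\cdot x/\eps_k}\psi_0^{\eps_k}(x)$; by the construction of Section~\ref{sec:4}, $M_{0,n}(\xi_0)$ is then the orthogonal projection onto the line spanned by this weak limit, normalized by its $L^2$-norm, and the measure $\nu_n$ puts mass $\alpha_{\xi_0}=\|\psi_{\xi_0}|_{t=0}\|_{L^2}^2$ at $\xi_0$ so as to recover the total $L^2$-mass radiated from that critical point.

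Combining the three steps, the expression of Theorem~\ref{mainresult} becomes
$$\lim_{k\to\infty}\int_a^b\!\!\int_{\R^d}\phi(x)|\psi^{\eps_k}(t,x)|^2\,dx\,dt=\sum_{\xi\in\Lambda_n}\int_a^b\int_{\R^d}\phi(x)|\psi_\xi(t,x)|^2\,dx\,dt,$$
since $\Tr[m_\phi(0,\xi)M_n(t,\xi)]=\|\psi_\xi|_{t=0}\|^{-2}\langle\psi_\xi(t),\phi\psi_\xi(t)\rangle$ and the weight $\alpha_\xi=\|\psi_\xi|_{t=0}\|^2$ cancels the normalization. This proves both assertions of the corollary, the only non-routine input being the identification of $M_{0,n}$ from Section~\ref{sec:4}.
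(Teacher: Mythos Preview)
Your approach is essentially the one the paper takes: the corollary is obtained by specializing Theorem~\ref{mainresult} to the case where $\Lambda_n$ is discrete, so that $T^*\Lambda_n=\{0\}\times\Lambda_n$, $N_\xi\Lambda_n=\R^d$, and the Heisenberg equation~\eqref{eq:heis} propagates a rank-one projection via the unitary group generated by $\tfrac12 d^2\varrho_n(\xi)D_z\cdot D_z+V_{\rm ext}$; the paper makes exactly this reduction in the paragraph following Theorem~\ref{theo:nondis} and in Section~5.5. The one step you flag as ``non-routine'' --- identifying $M_{0,n}(\xi)$ with the projection onto the weak $L^2$-limit of ${\rm e}^{-i\xi\cdot x/\eps_k}\psi_0^{\eps_k}$ and $\alpha_\xi$ with the squared norm of that limit --- is likewise not spelled out in the paper but deferred to the two-microlocal construction (Proposition~\ref{prop:compact} together with the vector-space result quoted from \cite{CFMProc}); your heuristic is the correct intuition, and your honesty about relying on Section~\ref{sec:4} matches the paper's own level of detail. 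A minor remark: the statement of the corollary writes $\alpha_\xi=\|\psi_\xi|_{t=0}\|_{L^2}$ without a square, but your computation (and the paper's own discussion after Theorem~\ref{theo:nondis}) shows the square is needed for the trace formula to reproduce $|\psi_\xi(t,x)|^2$.
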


This corollary is  well known and we refer to  the work by Allaire and Piatnitski~\cite{AP05} or to  \cite{AP06} for similar results in a related problem; in that work homogenization and two-scale convergence techniques are used to obtain a precise description of the solution profile for data similar to ours and for Blochs mode having non-degenerated critical points. In~\cite{BBA11}, Barletti and Ben~Abdallah obtained a result similar to Corollary \ref{mainresul:case1} by following the approach initiated by Kohn and Luttinger in~\cite{LuttingerKohn} consisting in introducing a (non-canonical) basis of modified Bloch functions. 

\medskip 

The starting point in our approach is conceptually closer to that in~\cite{PR96}, in the sense that we analyse the structure of Wigner measures associated to sequences of solutions. The main novelty here is the use of two-microlocal Wigner measures, that give a more explicit geometric description of the mechanism that underlies the Effective Mass Approximation, showing that it is a result of the dispersive effects associated to high-frequency solutions to the semiclassical Bloch band equations. Moreover, we are able to deal with the presence of non-isolated critical points on the Bloch energies and to prove Theorem~\ref{mainresult}. We believe our approach is sufficiently robust to be implemented on a Bloch band, isolated from the remainder of the spectrum,  and consisting of several Bloch modes which may present crossings. We will devote further works to this specific problem. 
It is also interesting to notice that our result generalizes to initial data which are a finite sum of data satisfying (4) of Assumption~\ref{hypothesis:main}. The weak limit of the energy density associated with the solution corresponding to this new data is the sum of weak limits of the energy densities of the solution associated with each term of the data, without any interference (see section~\ref{sec:superposition} for a precise statement).

\subsection{Strategy of the proof}

The proof of Theorem~\ref{mainresult}  relies on the analysis of the solution~$U^\eps$  to equation~(\ref{eq:U}) with initial data $U^\eps_0$ as introduced in (4) of Assumption~\ref{hypothesis:main}, and more precisely on its  component $U^\eps_n$ on the $n$-th Bloch mode  and its restriction $\psi^\eps_n$ by $L^\eps$:
 $$U^\eps_n= \Pi_n(\eps D_x)U^\eps,\;\;
\psi^\eps_n = L^\eps U^\eps_n.$$
It is shown in Section~\ref{sec:psiepsn} that the family $(\psi^\eps_n)$ solves the equation 
\begin{equation}\label{eq:U_components}
\left\{ \begin{array}{l}
i\eps^2 \partial_{t} \psi^\eps_{n} (t,x)- \varrho_{n}(\eps D_x) \psi^\eps_{n}(t,x)- \eps^2 V_{\rm ext}(t,x) \psi^\eps_{n}(t,x)= \eps^2 f^\eps_n(t,x),  \vspace{0.2cm}\\
\psi^\eps_{n}|_{t=0}(x)= \psi_{0}^{\eps}(x)
\end{array}\right.
\end{equation}
 with 
  $$f^\eps_n=L^\eps  \left[  \Pi(\eps D_x) , V_{\rm ext}\right] U^\eps,$$
There, we prove that   
\begin{equation}\label{eq:adiabatic}
\forall T\in\R,\;\;\exists C_T>0,\;\; \sup_{t\in[0,T]} \| \psi^\eps(t,\cdot\,)- \psi^\eps_n(t,\cdot\,)\|_{L^2(\R^d)}\leq C_T \eps .
\end{equation}
and 
\begin{equation}\label{eq:fneps}
\exists C>0,\;\; 
\forall t\in\R,\;\;\|f_n^\eps(t,\cdot\,)\|_{L^2(\R^d)}\leq C\eps. 
\end{equation}
Equation~(\ref{eq:adiabatic})   shows that no other Bloch modes is concerned in the decomposition of $U^\eps$ and~$\psi^\eps$: the mass of $\psi^\eps$ remains above the specific mode $\varrho_n$ because it is separated from the other ones. 
Therefore, a crucial step in this strategy  consists in performing a detailed analysis of the dispersive equation~(\ref{eq:U_components}).

\subsection{Structure of the article}
Sections~\ref{sec:disp}
 to~\ref{sec:4}  are devoted to the analysis of a dispersive equation of the form~(\ref{eq:U_components}) in a more general setting. For this, we use pseudodifferential operators and semi-classical measures (Section~\ref{sec:semiclassical}) and we introduce two-microlocal tools (Section~\ref{sec:two_microlocal}) that allow us to prove the main results of Section~\ref{sec:disp}   in Section~\ref{sec:4}. Finally, in Section~\ref{sec:5}  we come back to the effective mass equations and prove Theorem~\ref{mainresult} , which requires additional results on the restriction operator~$L^\eps$, the projector~$\Pi_n(\xi)$ and energy estimates for solutions to \eqref{eq:U}. Some Appendices are devoted to basic results about pseudodifferential calculus and trace-class operator-valued measures, and to the proof of technical lemmata.


\section{Quantifying the lack of dispersion}\label{sec:disp}

As emphasized in the introduction,  understanding the limiting behavior as $\eps\to0$ of the position densities of solutions to the Schrödinger equation \eqref{eq:schro} relies on a careful analysis of the solutions of equations of the form: 
\begin{equation}\label{eq:disp2}
\left\{\begin{array}{l}
i\eps^2 \partial_{t} u^{\eps}(t,x) = \lambda(\eps D_x) u^{\eps}(t,x) + \eps^2 V_{\rm ext}(t,x) u^{\eps}(t,x)+\eps^3g^\eps(t,x) , \; (t,x)\in\R\times\R^d,\\
u^\eps_{|t=0}=u^\eps_0,
\end{array}
\right.
\end{equation}
where  $(g^\eps(t,\cdot))$ is locally uniformly bounded with respect to $t$ in $L^2(\R^d)$. 

\medskip

This equation ceases to be dispersive as soon as $\lambda(\xi)$ has critical points $\xi \neq 0$, and this is always the case if $\lambda$ is a Bloch energy. Heuristically, one can think that one of the consequences of a dispersive time evolution is a regularization of the high-frequency effects (that is associated to frequencies $\eps\xi=c\neq 0$) caused by the sequence of initial data. These heuristics have been made precise in many cases; a presentation of our results from this point of view can be found in \cite{CFMProc}. The reader can also find there a detailed account on the literature on the subject.

\medskip 

Here we show that, in the presence of critical points of $\lambda$, some of the high-frequency effects exhibited by the sequence of initial data persist after applying the time evolution~\eqref{eq:disp2}.  We provide a quantitative picture of this persistence by giving a complete description of the asymptotic behavior of the densities $|u^\eps(t,x)|^2dxdt$ associated to a bounded sequence $(u^\eps)$ of solutions to \eqref{eq:disp2}. We  give an explicit procedure to compute all weak-$\star$ accumulation points of the sequence of positive measures $(|u^\eps(t,x)|^2dxdt)$ in terms of quantities that can be obtained from the sequence of initial data $(u_{0}^{\eps})$. These results are of independent interest; we have thus chosen to present them in a more general framework than what it is necessary in our applications to Effective Mass Theory. 

\medskip 

In order to obtain a non trivial result we must make sure that the characteristic length-scale of the oscillations carried by the sequence of initial data is of the order of $\eps$. The following assumption is sufficient for our purposes:

\begin{enumerate}
\item[\textbf{H0}] The sequence $(u^\eps_0)$ is uniformly bounded in $L^2(\R^d)$ and $\eps$-oscillating, in the sense that its energy is concentrated on frequencies smaller or equal than $1/\eps$ :
\begin{equation}\label{def:eps-osc}
\limsup_{\eps\rightarrow 0^+}\int_{|\xi|>R/\eps} | \widehat {u^\eps_0} (\xi)| ^2 d\xi \Tend{R}{+\infty} 0.
\end{equation}
\end{enumerate} 

We shall assume that $\lambda$ is smooth and grows at most polynomially, and that its set of critical points is a submanifold of $\R^d$.  More precisely, we impose the following hypotheses on $\lambda$ and $V$:

\begin{enumerate}
\item[\textbf{H1}] $V_{\rm ext}\in C^\infty(\R\times\R^d)$ is bounded together with its derivatives and $\lambda\in C^\infty(\R^d)$ , together with its derivatives, grows at most polynomially; \textit{i.e.} there exists $N>0$ such that, for every $\alpha\in\N^d_+$, one has:
$$\sup_{\xi\in\R^d}|\partial_\xi^\alpha\lambda(\xi)|(1+|\xi|^N)^{-1}<\infty.$$
\item[\textbf{H2}] The set $$\Lambda := \left\lbrace \xi \in \R^d : \nabla\lambda(\xi) = 0 \right\rbrace$$ is a connected, closed embedded submanifold of $\R^d$ of codimension $0<p\leq d$ and the Hessian $d^2\lambda$ is of maximal rank over $\Lambda$. 

\end{enumerate}

Hypothesis {\bf H2} implies the existence of tubular coordinates in a neighborhood of~$\Lambda$. A stronger version of {\bf H2} is to suppose that all critical points of $\lambda$ are non-degenerate (that is, the Hessian of $\lambda$, $d^2\lambda(\xi)$ is a non-degenerate quadratic form for every $\xi\in\Lambda$). This implies that $p=d$ and $\Lambda$ is a discrete set in~$\R^d$; if moreover one has that $\lambda$ is $\Z^d$-periodic, which is the situation when $\lambda$ is a Bloch energy, this set is finite modulo $\Z^d$. We first state the main result of this section under this stronger hypothesis. 

\begin{theorem}\label{theo:disc}
Suppose that the sequence of initial data $(u^\eps_0)$ verifies {\bf H0}, denote by~$(u^\eps)$ the corresponding sequence of solutions to \eqref{eq:disp2}. Suppose in addition that {\bf H1} is satisfied and all critical points of $\lambda$ are non-degenerate. 
Then there exists a subsequence $(u^{\eps_k}_0)$ such that for every $a<b$ and every $\phi\in \mathcal{C}_c(\R^d)$ the following holds:
\begin{equation}\label{eq:limu}
\lim_{k\to\infty}\int_a^b\int_{\R^d}\phi(x)|u^{\eps_k}(t,x)|^2dxdt=\sum_{\xi\in \Lambda}\int_a^b\int_{\R^d}\phi(x)|u_\xi (t,x)|^2dxdt,
\end{equation}
where $u_\xi$ solves the following Schrödinger equation:
\begin{equation}\label{eq:schrohprofil}
i\partial_t u_\xi(t,x) =d^2\lambda(\xi)D_x\cdot D_x u_\xi(t,x) +V_{\rm ext}(t,x)u_\xi(t,x),
\end{equation}
with initial data:
\begin{center}
$u_\xi|_{t=0}$ is the weak limit in $L^2(\R^d)$ of the sequence $\left({\rm e}^{-\frac{i}{\eps_k} \xi \cdot x} u^{\eps_k}_0\right)$. 
\end{center}
If $\Lambda=\emptyset$ then the right-hand side of \eqref{eq:limu} is equal to zero.
\end{theorem}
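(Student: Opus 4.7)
The plan is to combine a semi-classical Wigner-measure analysis (to localize the phase-space mass on $\Lambda$) with a Fourier cut-off decomposition around each critical point and a shift-plus-Taylor argument that reduces the dynamics on each piece to the effective equation \eqref{eq:schrohprofil}.

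\emph{Step 1 (localization on $\Lambda$).} Under \textbf{H0} and the uniform $L^2$-bound on $u^\eps(t)$ propagated by \eqref{eq:disp2} (the operator $\lambda(\eps D_x)$ is self-adjoint, $V_{\rm ext}$ is real-valued bounded, and $\eps^3 g^\eps$ is an $L^2$-forcing of size $O(\eps)$ once the equation is divided by $\eps^2$), a subsequence of the scale-$\eps$ Wigner transforms of $u^\eps(t)$ converges in $\mathcal{D}'(\R_t\times\R^{2d})$ to a positive Radon measure $\mu_t(dx,d\xi)\,dt$ with total mass $\int\mu_t(\R^{2d})\,dt=\lim\int\|u^\eps(t)\|_{L^2}^2\,dt$. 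The symbol $\eps^{-2}\lambda(\eps\xi)$ generates a Hamiltonian flow of velocity $\eps^{-1}\nabla\lambda(\eps\xi)$ in $x$; the $\eps^{-1}$ prefactor makes this infinite in the limit unless $\nabla\lambda(\eps\xi)=0$. Concretely, a Weyl-commutator argument applied to $\partial_t\langle(\phi(x)b(\xi))^W u^\eps,u^\eps\rangle$ produces the approximate transport equation $\partial_t\mu+\eps^{-1}\nabla\lambda(\eps\xi)\cdot\partial_x\mu=o(1)$, which, tested against compactly $x$-supported functions, forces $\supp\mu\subset\R\times\R^d\times\Lambda$. Because $\Lambda$ is discrete and closed,
\[
\mu_t=\sum_{\xi_0\in\Lambda}\nu_t^{\xi_0}(dx)\otimes\delta_{\xi_0}(d\xi)
\]
for positive measures $\nu_t^{\xi_0}$ on $\R^d$.

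\emph{Step 2 (profile extraction at each $\xi_0$).} Pick $\chi\in C_c^\infty(\R^d;[0,1])$ with $\chi=1$ near $0$ and supported in a ball $B(0,r)$ so small that the translates $\xi_0+B(0,2r)$, $\xi_0\in\Lambda$, are pairwise disjoint, and set
\[
u^\eps_{\xi_0}(t,x):=\chi(\eps D_x-\xi_0)u^\eps(t,x),\qquad v^\eps_{\xi_0}(t,x):=e^{it\lambda(\xi_0)/\eps^2}e^{-i\xi_0\cdot x/\eps}u^\eps_{\xi_0}(t,x).
\]
The intertwining $\eps D_x(e^{i\xi_0\cdot x/\eps}w)=e^{i\xi_0\cdot x/\eps}(\xi_0+\eps D_x)w$, the cancellation $\nabla\lambda(\xi_0)=0$ and the Taylor expansion of $\lambda$ at $\xi_0$ show that, tested against any smooth $\psi$ with compactly supported Fourier transform, $\eps^{-2}[\lambda(\xi_0+\eps D_x)-\lambda(\xi_0)]\psi$ converges to the effective Hamiltonian in \eqref{eq:schrohprofil} applied to $\psi$. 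Passing to the weak limit in the distributional equation satisfied by $v^\eps_{\xi_0}$ then yields that any weak $L^2$-limit $u_{\xi_0}$ of $v^\eps_{\xi_0}$ solves \eqref{eq:schrohprofil}. The initial datum $v^\eps_{\xi_0}|_{t=0}=\chi(\eps D_x)(e^{-i\xi_0\cdot x/\eps}u^\eps_0)$ converges weakly in $L^2$ to the weak $L^2$-limit of $e^{-i\xi_0\cdot x/\eps}u^\eps_0$ (which is $u_{\xi_0}|_{t=0}$ in the theorem's notation), since $\chi(\eps D_x)\to\mathrm{Id}$ strongly on $L^2$; stability of \eqref{eq:schrohprofil} then identifies $u_{\xi_0}$ uniquely along the extracted subsequence.

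\emph{Step 3 (assembly and identification of the diagonal density).} Write $u^\eps=\sum_{\xi_0\in\Lambda}u^\eps_{\xi_0}+r^\eps$. Step~1 gives $\int_a^b\!\int\phi|r^\eps|^2\,dx\,dt\to 0$ since $1-\sum_{\xi_0}\chi(\xi-\xi_0)$ vanishes on $\Lambda$. The off-diagonal cross-terms $\int\phi\,\overline{u^\eps_{\xi_0}}u^\eps_{\xi_0'}\,dx$ with $\xi_0\neq\xi_0'$ carry the oscillating phase $e^{i(\xi_0'-\xi_0)\cdot x/\eps}$ and vanish in the time-integrated limit by non-stationary phase (Riemann--Lebesgue). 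The diagonal terms satisfy $|u^\eps_{\xi_0}|^2=|v^\eps_{\xi_0}|^2$, and the identification $\int_a^b\!\int\phi|v^\eps_{\xi_0}|^2\,dx\,dt\to\int_a^b\!\int\phi|u_{\xi_0}|^2\,dx\,dt$ uses the two-microlocal Wigner calculus of Section~\ref{sec:two_microlocal}: the two-microlocal measure of $(u^\eps_{\xi_0})$ at $\xi_0$ evolves under the Heisenberg flow of the effective Hamiltonian in \eqref{eq:schrohprofil}, whose dispersion (guaranteed by the non-degeneracy of $d^2\lambda(\xi_0)$) kills over any positive time interval the defect of the initial operator-valued measure from the rank-one projector onto $u_{\xi_0}(0)$. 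Combining the three facts yields \eqref{eq:limu}.

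The main obstacle is the concentration statement in Step~1 and the identification of the diagonal density in Step~3: both rest on the quantitative dispersion of the relevant semiclassical evolutions, and the cleanest framework for carrying them out is the two-microlocal refinement of the Wigner calculus developed in Section~\ref{sec:two_microlocal} and applied in Section~\ref{sec:4}.
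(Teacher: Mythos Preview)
Your proposal is correct and follows essentially the paper's route: the paper deduces Theorem~\ref{theo:disc} from the more general Theorem~\ref{theo:nondis}, whose proof in Sections~\ref{sec:two_microlocal}--\ref{sec:4} rests on exactly the two-microlocal decomposition (compact part $M_t\nu$ evolving by the Heisenberg equation, part at infinity $\gamma_t$ killed by the invariance of Proposition~\ref{prop:inv2micro} combined with non-degeneracy of $d^2\lambda$) that you invoke in Step~3; your explicit Fourier cut-off and shift in Step~2 is just the concrete instantiation of this machinery when $\Lambda$ is discrete. One clarification on your phrasing in Step~3: the compact part $M_0$ at $\xi_0$ is \emph{already} the rank-one projector onto $u_{\xi_0}(0)$ (since the operators $Q_a$ are compact on $L^2(\R^d)$ and $v^\eps_{\xi_0}(0)\rightharpoonup u_{\xi_0}(0)$ weakly, one has $(Q_a v^\eps_{\xi_0}(0),v^\eps_{\xi_0}(0))\to (Q_a u_{\xi_0}(0),u_{\xi_0}(0))$); what dispersion kills for $t\neq 0$ is the \emph{measure at infinity} $\gamma_t$, not a defect of $M_0$ from being rank one.
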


Note that $u_\xi$ may be identically equal to zero even if the sequence $(u^\eps_0)$ oscillates in the direction $\xi$. For instance, if the sequence of initial data is a coherent state:
$$u^\eps_0(x)=\frac{1}{\eps^{d/4}}\rho\left(\frac{x-x_0}{\sqrt{\eps}}\right){\rm e}^{\frac{i}{\eps}\xi_0 \cdot x},$$
centered at a point $(x_0,\xi_0)$ in phase space with $\rho \in C_0(\R^d)$, then $u_\xi|_{t=0}=0$ for every $\xi\in\R^d$. Theorem \eqref{theo:disc} allows us to conclude that the corresponding solutions $(u^\eps)$ converge to zero in $L^2_{\loc}(\R\times\R^d)$. 

\medskip 

Theorem \ref{theo:disc} can be interpreted as a description of the obstructions to the validity of smoothing-type estimates for the solutions to equation \eqref{eq:disp2} in the presence of critical points of the symbol of the Fourier multiplier. We refer the reader to \cite{CFMProc} for additional details concerning this issue and a simple proof of Theorem~\ref{theo:disc}. Here, 
we obtain Theorem \ref{theo:disc} as a particular case of a more general result which requires some geometric preliminaries. 

\medskip 

As for the mode Bloch $\varrho_n$ in the Introduction, we identify $\lambda$ to a function defined on $(\R^d)^*$ rather than $ \R^d$, and we associate with $\Lambda$ its 
 cotangent bundle  $T^*\Lambda$ and its normal bundle  $N\Lambda$.
In the analogue of Theorem \ref{theo:disc} in this context, the sum over critical points is replaced by an integral with respect to a measure over $T^*\Lambda$, and the Schr\"odinger equation \eqref{eq:schrohprofil} becomes a Heisenberg equation for a time-dependent family $M$ of trace-class operators 
of $\bigsqcup_{(v,\xi)\in T^*\Lambda}{\mathcal L}^1_+\left(L^2(N_\xi\Lambda)\right)$.

\begin{theorem}\label{theo:nondis}
Let $(u^\eps_0)$ be a sequence of initial data satisfying {\bf H0}, and denote by $(u^\eps)$ the corresponding sequence of solutions to \eqref{eq:disp2}. If {\bf H1} and {\bf H2} hold, then there exist a subsequence $(u^{\eps_k}_0)$, a positive measure $\nu\in\mathcal{M}_+(T^*\Lambda)$ and a measurable family of self-adjoint, positive, trace-class operators 
$$M_0:T^*_\xi\Lambda \ni (v,\xi)\longmapsto M_0(z,\xi)\in \mathcal{L}_+^1(L^2(N_\xi\Lambda)),\quad \Tr_{L^2(N_\xi\Lambda)} M_0(v,\xi)=1,$$ 
such that for every $a<b$ and every $\phi\in \mathcal{C}_c(\R^d)$ one has:
\begin{equation}\label{eq:limubis}
\lim_{k\to\infty}\int_a^b\int_{\R^d}\phi(x)|u^{\eps_k}(t,x)|^2dxdt=\int_a^b \int_{T^*\Lambda}\Tr_{L^2(N_\xi\Lambda)}\left[m_\phi(v,\xi)M_t(v,\xi)\right]\nu(dv,d\xi)dt,
\end{equation}
where $t\mapsto M_t(v,\xi)\in \mathcal{C}(\R;\mathcal{L}_+^1(L^2(N_\xi\Lambda))$ solves the following Heisenberg equation:
\begin{equation}\label{eq:heis1}
\left\{ \begin{array}{l}
i\partial_t M_t(v,\xi) =\left[\dfrac{1}{2} d^2\lambda(\xi)D_z\cdot D_z + m_{V_{\rm ext}(t,\cdot)}(v,\xi), M_t(v,\xi)\right] ,\vspace{0.2cm}\\
M |_{t=0}=M_0.
\end{array} \right.
\end{equation}
\end{theorem}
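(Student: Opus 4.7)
The plan is to extract the measure $\nu$ and the operator-valued initial datum $M_{0}$ by a two-microlocal analysis centered on the critical submanifold $\Lambda$, and to derive the Heisenberg equation \eqref{eq:heis1} by testing the evolution \eqref{eq:disp2} against two-scale symbols that resolve the $\eps$-scale oscillations normal to $\Lambda$. First, I would invoke the standard semiclassical pseudodifferential calculus of Section \ref{sec:semiclassical} to extract, along a subsequence, a time-dependent Wigner measure $\mu_{t}(dx,d\xi)$ associated with $(u^{\eps}(t,\cdot))$. Testing \eqref{eq:disp2} against symbols supported in the open set $\{\nabla\lambda(\xi)\neq 0\}$ and using a non-stationary phase argument on the semiclassical time scale $\eps^{2}$, one shows that $\mu_{t}$ is supported in $\R^{d}\times\Lambda$. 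Hypothesis \textbf{H0} is precisely what prevents mass from escaping at frequencies $\vert\xi\vert\gg 1/\eps$, ensuring that the weak-$\star$ limit of $\vert u^{\eps}(t,x)\vert^{2}\,dx\,dt$ is the $x$-projection of $\mu_{t}\otimes dt$.

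Because $\mu_{t}$ lives above $\Lambda$, it is blind to the normal oscillations at scale $\eps$ that drive the residual dynamics. Using the tubular coordinates provided by \textbf{H2}, any $\xi$ near $\Lambda$ decomposes uniquely as $\xi=\xi_{\Lambda}+\eta$ with $\xi_{\Lambda}\in\Lambda$ and $\eta\in N_{\xi_{\Lambda}}\Lambda$, and I would test against two-scale symbols $a(x,v,\xi_{\Lambda},z_{*})$ with $v\in T^{*}_{\xi_{\Lambda}}\Lambda$ and the blown-up normal frequency $z_{*}=\eta/\eps$. Following the two-microlocal machinery developed in Section \ref{sec:two_microlocal}, together with weak-$\star$ compactness of trace-class-operator-valued measures (Appendix), one extracts along a further subsequence a positive Radon measure $\nu$ on $T^{*}\Lambda$ and a measurable family of positive unit-trace operators $M_{t}(v,\xi)\in\mathcal{L}^{1}_{+}(L^{2}(N_{\xi}\Lambda))$ such that the two-scale limit of the Wigner distribution of $u^{\eps}$ pairs $a$ against $\int_{\R}\!\int_{T^{*}\Lambda}\Tr_{L^{2}(N_{\xi}\Lambda)}[\mathrm{Op}(a)(v,\xi)\,M_{t}(v,\xi)]\,\nu(dv,d\xi)\,dt$. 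Specializing to $a(x,v,\xi,z_{*})=\phi(x)$ produces precisely the right-hand side of \eqref{eq:limubis}: writing $x=v+z$, the quantization of $\phi$ on the two-microlocal fibre is exactly $m_{\phi}(v,\xi)$. At $t=0$, the pair $(\nu,M_{0})$ is defined intrinsically from $(u^{\eps}_{0})$ by the very same construction.

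To obtain the Heisenberg equation, I would test \eqref{eq:disp2} against such a two-scale symbol and divide by $\eps^{2}$. Taylor-expanding at $\xi\in\Lambda$ using $\nabla\lambda(\xi)=0$ and \textbf{H2} yields
\begin{equation*}
\frac{1}{\eps^{2}}\lambda(\xi+\eps\,\cdot\,)=\frac{1}{2}\,d^{2}\lambda(\xi)(\,\cdot\,)\cdot(\,\cdot\,)+O(\eps),
\end{equation*}
with the constant $\lambda(\xi)/\eps^{2}$ contributing only an overall phase that disappears from the density, and the remainder controlled on compact sets in $z_{*}$ thanks to the polynomial growth in \textbf{H1}. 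Simultaneously, multiplication by $V_{\rm ext}(t,x)$ reads, in the decomposition $x=v+z$ with $z\in N_{\xi}\Lambda$, as the multiplication operator $m_{V_{\rm ext}(t,\cdot)}(v,\xi)$ on the fibre $L^{2}(N_{\xi}\Lambda)$, and the source term $\eps^{3}g^{\eps}$ is $O(\eps)$ in the weak formulation. Passing to the limit $\eps\to 0$, the variables $(v,\xi)$ are frozen while the normal variable $z$ carries all the non-trivial dynamics, which yields
\begin{equation*}
i\partial_{t}M_{t}(v,\xi)=\bigl[\tfrac{1}{2}\,d^{2}\lambda(\xi)D_{z}\cdot D_{z}+m_{V_{\rm ext}(t,\cdot)}(v,\xi),\,M_{t}(v,\xi)\bigr]
\end{equation*}
in the weak sense against bounded operators on $L^{2}(N_{\xi}\Lambda)$, which is exactly \eqref{eq:heis1}; positivity and unit trace of $M_{t}$ are preserved because the generator is self-adjoint.

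The delicate point is the construction and manipulation of the operator-valued object $M_{t}(v,\xi)$: one must justify the disintegration of the two-scale Wigner distribution into $\nu\otimes M_{t}$, show rigorously that the full rescaled symbol $\lambda(\xi+\eps z_{*})/\eps^{2}$ converges to the Hessian quadratic form on $N_{\xi}\Lambda$ uniformly in $\xi\in\Lambda$—this is where the maximal-rank hypothesis in \textbf{H2} is essential, since it guarantees that the limiting quadratic form is non-degenerate on the fibre and defines an essentially self-adjoint operator—and control the commutator between the tubular localization and multiplication by $V_{\rm ext}$ up to remainders vanishing in the trace-class-valued weak-$\star$ sense. Once this machinery is in place, Theorem \ref{theo:disc} is recovered by specialization: when all critical points are non-degenerate, $\Lambda$ is discrete, $N_{\xi}\Lambda=\R^{d}$, $T_{\xi}^{*}\Lambda=\{0\}$, the measure $\nu$ reduces to a sum of Dirac masses, $M_{t}(\xi)$ is the rank-one projector on a solution of \eqref{eq:schrohprofil}, and the Heisenberg equation collapses to that scalar Schrödinger equation.
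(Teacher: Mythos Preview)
Your outline captures the overall architecture correctly—localize the semiclassical measure on $\R^d\times\Lambda$, then perform a second microlocalization normal to $\Lambda$, extract an operator-valued measure, and derive the Heisenberg equation by Taylor expansion of $\lambda$ around $\Lambda$. This is indeed the paper's strategy. However, there is a genuine gap in your argument.

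The two-microlocal decomposition of Section~\ref{sec:two_microlocal} does not produce only the pair $(\nu,M_t)$: it splits into a \emph{compact part}, which is the operator-valued measure you describe, and a \emph{part at infinity}, a positive measure $\gamma_t$ on the sphere bundle $S\Lambda$ capturing mass for which the rescaled normal frequency $z_*=\eta/\eps$ escapes to infinity. Your Taylor expansion $\eps^{-2}\lambda(\xi+\eps z_*)=\tfrac12 d^2\lambda(\xi)z_*\cdot z_*+O(\eps)$ is only valid on compact sets in $z_*$; on the region $|z_*|\to\infty$ it fails, and nothing in your argument prevents mass from living there. The paper handles this by proving (Proposition~\ref{prop:inv2micro}) that $\gamma_t$ is invariant under the flow $(x,\sigma,\omega)\mapsto(x+s\,d^2\lambda(\sigma)\omega,\sigma,\omega)$ on $S\Lambda$, and then using \textbf{H2} to conclude that this flow has no stationary points (since $d^2\lambda(\sigma)$ is injective on $N_\sigma\Lambda$), forcing $\gamma_t=0$ by the finite-measure argument of Lemma~\ref{lem:classicdisp}.

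This is the actual role of the maximal-rank hypothesis in \textbf{H2}: not to make the fibre Hamiltonian essentially self-adjoint (the Heisenberg equation \eqref{eq:heis1} is well-posed for any quadratic form), but to kill the measure at infinity. Without this step, formula \eqref{eq:limubis} would carry an additional term coming from $\gamma_t$, and the theorem would be false as stated—indeed the paper remarks that dropping the rank condition produces exactly such an extra term.
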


\begin{remark}
When hypothesis {\bf H2} about the rank of the Hessian $d^2\lambda$ is dropped, then an additional term appears in~(\ref{eq:limubis}) (see~\cite{CFMProc}).
\end{remark}

When $\Lambda$ consists of a set of isolated critical points, Theorems \ref{theo:disc} and \ref{theo:nondis} are completely equivalent. Note that in this case, $T^*\Lambda=\{0\}\times\Lambda$ and the measure $\nu$ (which in this case is a measure depending on $\xi\in\R^d$ only) is simply 
$$\nu=\sum_{\xi\in\Lambda}\alpha_\xi\delta_\xi,$$
where $\alpha_\xi = \|u_\xi|_{t=0}\|^2_{L^2(\R^d)}$. In addition, $N_\xi\Lambda=\R^d$ and the operator $M_t(\xi)$ (which again does not depend on $z$) is the orthogonal projection onto $u_\xi(t,\cdot)$ in $L^2(\R^d)$ (recall that $u_\xi$ solves the Schrödinger equation \eqref{eq:schroh}). These orthogonal projections satisfy the Heisenberg equation \eqref{eq:heis1}.

\medskip

The proof of Theorem~\ref{theo:nondis} follows a strategy developed in the references~\cite{MaciaTorus,AM:12,AFM:15} in a different (though related) context.  
As in those references, the measure~$\nu$ and the family of operators $M_0$
only depend on the subsequence of initial data $(u^{\eps_k}_0)$; we will see in Section~\ref{sec:semiclassical} that they are defined as two microlocal Wigner measures of $(u^{\eps_k}_0)$ in the sense of~\cite{Fethese95,F2micro:00,F2micro:05,MaciaTorus}. At this point, it might be useful to stress out that in this regime the limiting objects $M,\nu$ cannot be computed in terms of the Wigner/semiclassical measure of the sequence of initial data, as it is the case when dealing with the semiclassical limit. 
 In~\cite{CFMProc},  we have explicitly constructed sequences of initial data having the same semiclassical measure but such that their time dependent measures differ.
 This type of behavior was first remarked in this context in the case of the Schrödinger equation on the torus, see \cite{MaciaAv, MaciaTorus}.

\medskip

We also emphasize that the original definition of  two-microlocal Wigner measures performed in~\cite{F2micro:00} and their extension to more general geometric setting~\cite{F2micro:05} were only defined locally. We prove here that they extend to global objects in the geometric context of closed simply connected embedded submanifolds of~$\R^d$; related constructions were performed in   the torus~\cite{MaciaTorus,AM:12,AFM:15,MR:18} and the disk~\cite{ALM:16}.

\medskip

See also, that as soon as $\Lambda$ has strictly positive dimension (\textit{i.e.} it is not a union of isolated critical points), the measure $\nu$ may be singular with respect to the $z$ variable, while when $\Lambda$ consists in isolated points, the weak limit of the densities $|\psi^\eps(t,x)|^2dx$ are proved to be absolutely continuous with respect to the measure $dx$. See \cite{CFMProc} for specific examples exhibiting this type of behavior; see also that reference for examples proving the necessity of hypothesis {\bf H2}; it is shown there that different types of behavior can happen whenever the Hessian of $\lambda$ is not of full rank on $\Lambda$. 

\medskip 

The main idea of the proof comes from the following remark. Setting 
$v^\eps(t,x)= u^\eps(\eps t, x),$
then $(v^\eps)$ solves the semi-classical equation 
\begin{equation}\label{eq:semischro}
\left\{\begin{array}{l}
i\eps \partial_{t} v^{\eps}(t,x) = \lambda(\eps D_x) v^{\eps}(t,x) + \eps^2 V_{\rm ext}(t,x) v^{\eps}(t,x)+\eps^3g^\eps(t,x) , \; (t,x)\in\R\times\R^d,\\
v^\eps_{|t=0}=u^\eps_0,
\end{array}
\right.
\end{equation}
which means that, in the preceding analysis, we have performed 
the semiclassical limit $\eps\to 0$ in \eqref{eq:semischro} simultaneously with the limit $t / \eps\to +\infty$.  Such analysis, combining high-frequencies ($\eps\to 0$) and long times ($t\sim t_\eps\to +\infty$) is relevant if one wants to understand the behavior of solutions of \eqref{eq:semischro} beyond the Ehrenfest time. This approach was followed in the case of confined geometries in the references \cite{MaciaAv, AFM:15, MR:16}. Note also that in the particular case when $\lambda(\xi)$ is homogeneous of degree two, this change of time scale transforms the semiclassical equation~\eqref{eq:semischro} into the non-semiclassical one (that is, the one corresponding to $\eps=1$). Therefore, it is possible to derive  results on the dynamics of the Schrödinger equation via this scaling limit \cite{MaciaTorus, AnRiv, AM:14, ALM:16}. The reader can consult the survey articles \cite{MaciaDispersion, AM:12} and the introductory lecture notes \cite{MaciaLille} for additional details and references on this approach.

\section{Pseudodifferential operators and semiclassical measures -- preliminaries}\label{sec:semiclassical}

In this section we recall some basic facts on Wigner distributions and semiclassical measures, which are the tools we are going to use to prove Theorem \ref{theo:nondis} and derive preliminary results about Wigner measures associated with families of solutions of equations of the form~(\ref{eq:disp2}). 

\subsection{Wigner transform and Wigner measures}
Wigner distributions provide a useful way for 
computing  weak-$\star$ accumulation points of a sequence of densities $|f^\eps(x)|^2dx$ constructed from a $L^2$-bounded sequence $(f^\eps)$ of solutions of a semiclassical (pseudo) differential equation. They provide a joint physical/Fourier space description of the energy distribution of functions in $\R^d$.
The Wigner distribution of a function $f\in L^2(\R^d)$ is defined as:
$$W^\eps_f(x,\xi):=\int_{\R^d}f\left(x-\frac{\eps v}{2}\right)\overline{f\left(x+\frac{\eps v}{2}\right)}{\rm e}^{i\xi\cdot v}\frac{dv}{(2\pi)^d}, $$
and has several interesting properties (see, for instance, \cite{FollandPhaseSpace}). 
\begin{itemize}
\item $W^\eps_f\in L^2(\R^d\times\R^d)$.
\item Projecting $W_f^\eps$ on $x$ or $\xi$  gives the position or momentum densities of $f$ respectively :
$$\int_{\R^d}W_f^\eps(x,\xi)d\xi=|f(x)|^2,\quad \int_{\R^d}W_f^\eps(x,\xi)dx=\frac{1}{(2\pi\eps)^d}\left|\widehat{f}\left(\frac{\xi}{\eps}\right)\right|^2.$$
Note that despite this, $W_f^\eps$ is not positive in general.
\item For every $a\in\mathcal{C}^\infty_c(\R^d\times\R^d)$ one has:
\begin{equation}\label{eq:wbdd}
\int_{\R^d\times\R^d}a(x,\xi)W_f^\eps(x,\xi)dx\,d\xi=(\op_\eps(a)f,f)_{L^2(\R^d)},
\end{equation}
where $\op_\eps(a)$ is the semiclassical pseudodifferential operator of symbol $a$ obtained through the Weyl quantization rule: 
$$\op_\eps(a) f(x)=\int_{\R^{d}\times\R^d} a\left(\frac{x+y}{2},\eps\xi\right) {\rm e}^{i \xi\cdot (x-y)} f(y)dy\,\frac{d\xi}{(2\pi)^d}.$$  
\end{itemize} 

If $(f^\eps)$ is a bounded sequence in $L^2(\R^d)$ then $(W^\eps_{f^\eps})$ is a bounded sequence of tempered distributions in $\mathcal{S}'(\R^d\times\R^d)$. This is proved using identity \eqref{eq:wbdd} combined with the fact that the operators $\op_\eps(a)$ are uniformly bounded by a suitable semi-norm in $\mathcal{S}(\R^d\times\R^d)$, see \eqref{est:pseudo}. Appendix \ref{sec:pdo} contains additional facts on the theory of pseudodifferential operators, as well as references to the literature.

\medskip 

In addition, every accumulation point of $(W^\eps_{f^\eps})$ in $\mathcal{S}'(\R^d\times\R^d)$ is a positive distribution and therefore, by Schwartz's theorem, a positive measure on $\R^d\times\R^d$. These measures are called \textit{semiclassical} or \textit{Wigner measures}. See references \cite{Ge91,LionsPaul,GerLeich93,GMMP} for different proofs of the results we have presented so far.

\medskip 

Now, if $\mu\in\mathcal{M}_+(\R^d\times\R^d)$ is an accumulation point of $(W^\eps_{f^\eps})$ along some subsequence $(\eps_k)$
and $(|f^{\eps_k}|^2)$ converges weakly-$\star$ towards a measure $\nu\in\mathcal{M}_+(\R^d)$ then one has:
\begin{equation}\label{muleqnu}
\int_{\R^d}\mu(\cdot,d\xi)\leq \nu.
\end{equation}
Equality holds if and only if $(f^\eps)$ is $\eps$-oscillating:
\begin{equation}\label{def:xoscclassical}
\limsup_{\eps\rightarrow 0^+}\int_{|\xi|>R/\eps} | \widehat {f^\eps} (\xi)| ^2 d\xi \Tend{R}{+\infty} 0,
\end{equation} 
see \cite{Ge91,GerLeich93,GMMP}. The hypothesis {\bf H0} that we made on the initial data for equation \eqref{eq:disp2}, is this $\eps$-oscillating property.
Note also that~(\ref{muleqnu}) implies that $\mu$ is always a finite measure of total mass bounded by $\sup_\eps \|f^\eps\|^2_{L^2(\R^d)}$.

\begin{remark}\label{rem:Hsepsosc}
If $\|\langle \eps D_x\rangle^s f^\eps\|_{L^2(\R^d)}$ is uniformly bounded for some constant $s>0$, then the family $f^\eps$ is $\eps$-oscillating.
\end{remark}

\subsection{Wigner measure and family of solutions of dispersive equations}

We will now consider Wigner distributions associated to solutions of the evolution equation~(\ref{eq:disp2}) where $V_{\rm ext}$ and $\lambda$ satisfy hypothesis {\bf H1} and $(g^\eps(t,\cdot))$ is locally uniformly bounded with respect to $t$ in $L^2(\R^d)$.

\medskip

When the sequence $(u^\eps_0)$ of initial data is uniformly bounded in $L^2(\R^d)$, so is the corresponding sequence $(u^\eps(t,\cdot))$ of solutions to \eqref{eq:disp2} for every $t\in\R$. Therefore the sequence of Wigner distributions $(W^\eps_{u^\eps(t,\cdot)})$ is bounded in $\mathcal{C}(\R;\mathcal{S}'(\R^d\times\R^d))$. Nevertheless, its time derivatives are unbounded and, in general, one cannot hope to find a subsequence that converges pointwise (or even almost everywhere) in~$t$ (see Proposition \ref{prop:loc} below).
This difficulty can be overcome if one considers  the time-average  of the Wigner distributions. 

\begin{proposition}\label{prop:msc}
Let $(u^{\eps})$ be a sequence of solutions to \eqref{eq:disp2} issued from an $L^2(\R^d)$-bounded family of initial data $(u^\eps_0)$. Then there exist a subsequence $(\eps_k)$ tending to zero as $k\to\infty$ and a $t$-measurable family $\mu_t\in\mathcal{M}_+(\R^d\times\R^d)$ of finite measures, with total mass essentially uniformly bounded in $t\in\R$, such that, for every $\theta\in L^1(\R)$ and $a\in\mathcal{C}^\infty_c(\R^d\times\R^d)$:
$$\lim_{k\to\infty}\int_{\R\times\R^d\times\R^d} \theta(t) a(x,\xi)W_{u^{\eps_k}(t,\cdot)}^{\eps_k}(x,\xi)dx\,d\xi\, dt=\int_{\R\times\R^d\times\R^d}\theta(t)a(x,\xi)\mu_t(dx,d\xi)dt.$$
If moreover,  the families $(u^\eps_0)$ and $g^\eps(t,\cdot)$ are $\eps$-oscillating, then for every $\theta\in L^1(\R)$ and $\phi\in {\mathcal C}_c^\infty(\R^d)$: 
$$\lim_{k\to\infty}\int_\R\int_{\R^d} \theta(t) \phi(x)|u^{\eps_{k}}(t,x)|^2 dx \,dt=\int_\R\int_{\R^d\times\R^d} \theta(t)\phi(x) \mu_t( dx,d\xi) dt.$$ 
\end{proposition}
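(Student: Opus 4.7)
The key inputs are energy estimates for \eqref{eq:disp2} and the uniform $L^2$-boundedness of the Weyl quantization $\op_\eps(a)$ for compactly supported symbols. First I would establish that $\|u^\eps(t,\cdot)\|_{L^2(\R^d)}$ is bounded on every compact time interval, uniformly in $\eps$. The operator $\lambda(\eps D_x)+\eps^2 V_{\rm ext}(t,x)$ is self-adjoint on $L^2(\R^d)$, so the standard computation yields
\begin{equation*}
\frac{d}{dt}\|u^\eps(t,\cdot)\|_{L^2}^2=2\eps\,\IM\bigl(g^\eps(t,\cdot),u^\eps(t,\cdot)\bigr)_{L^2},
\end{equation*}
and Gronwall plus the local $L^2$-boundedness of $g^\eps$ gives the desired bound $\sup_{t\in[-T,T]}\|u^\eps(t,\cdot)\|_{L^2}\leq C_T$.

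Next I would produce the time-averaged limit. For $a\in\mathcal{C}^\infty_c(\R^{2d})$, identity~\eqref{eq:wbdd} and the Calderón-Vaillancourt theorem~\eqref{est:pseudo} give
\begin{equation*}
\bigl|\langle W^\eps_{u^\eps(t,\cdot)},a\rangle\bigr|=\bigl|(\op_\eps(a)u^\eps(t,\cdot),u^\eps(t,\cdot))_{L^2}\bigr|\leq C\,\|a\|_{\mathcal{S}}\,\|u^\eps(t,\cdot)\|_{L^2}^2,
\end{equation*}
so the map $t\mapsto\langle W^\eps_{u^\eps(t,\cdot)},a\rangle$ belongs to $L^\infty_{\loc}(\R)$ uniformly in $\eps$. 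Viewing $(W^\eps_{u^\eps(t,\cdot)})$ as an $\eps$-bounded family in $L^\infty_{\loc}(\R;\mathcal{S}'(\R^{2d}))=(L^1_{\comp}(\R;\mathcal{S}(\R^{2d})))'$, the Banach--Alaoglu theorem, combined with the separability of $L^1_{\comp}(\R;\mathcal{S}(\R^{2d}))$ and a diagonal extraction, produces a subsequence $(\eps_k)$ and a weak-$\star$ limit $\mu_\cdot\in L^\infty_{\loc}(\R;\mathcal{S}'(\R^{2d}))$. This yields the announced convergence for tensor products $\theta(t)a(x,\xi)$, and then for linear combinations; positivity of $\mu_t$ for a.e.\ $t$ is a consequence of the sharp Gårding inequality: if $a\geq 0$ then $\op_{\eps_k}(a)\geq -C\eps_k$, hence for every nonnegative $\theta\in L^1(\R)$ the quantity $\int\theta(t)\langle W^{\eps_k}_{u^{\eps_k}(t,\cdot)},a\rangle\,dt\geq -C\eps_k\int\theta(t)\|u^{\eps_k}(t,\cdot)\|_{L^2}^2 dt$ tends to a nonnegative limit. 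Schwartz's theorem then upgrades $\mu_t$ to a positive Radon measure on $\R^{2d}$, of finite mass bounded by $\limsup_\eps\|u^\eps(t,\cdot)\|_{L^2}^2$.

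For the second assertion, I would propagate the $\eps$-oscillation property from the initial data to $u^\eps(t,\cdot)$. By Remark~\ref{rem:Hsepsosc}, it suffices to show that $\|\langle\eps D_x\rangle^s u^\eps(t,\cdot)\|_{L^2}$ is uniformly bounded on compact time intervals whenever the same holds for $u^\eps_0$ (and for $g^\eps$). Since $\langle\eps D_x\rangle^s$ is a Fourier multiplier, it commutes with $\lambda(\eps D_x)$; applying it to \eqref{eq:disp2} produces a commutator $\eps^2[\langle\eps D_x\rangle^s,V_{\rm ext}(t,\cdot)]u^\eps$, which by symbolic calculus is of order $\eps^3$ as an operator from $H^{s}_\eps$ to $L^2$. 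The same energy argument as above then yields a Gronwall-controlled bound on $\|\langle\eps D_x\rangle^s u^\eps(t,\cdot)\|_{L^2}$. Having established the $\eps$-oscillation of the family $(u^{\eps_k}(t,\cdot))$ locally uniformly in $t$, the classical identity~\eqref{muleqnu} becomes an equality at almost every $t$: for every $\phi\in\mathcal{C}^\infty_c(\R^d)$,
\begin{equation*}
\int_{\R^d}\phi(x)|u^{\eps_k}(t,x)|^2\,dx-\int_{\R^{2d}}\phi(x)W^{\eps_k}_{u^{\eps_k}(t,\cdot)}(x,\xi)\,dx\,d\xi\longrightarrow 0
\end{equation*}
uniformly in $t$ on compacts. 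Multiplying by $\theta\in L^1(\R)$, integrating, and passing to the limit along the subsequence delivers the second claimed identity.

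\textbf{Main obstacle.} The delicate point is handling the time variable. Fixed-time Wigner measures are not well-behaved because $\partial_t W^\eps_{u^\eps(t,\cdot)}$ is unbounded as $\eps\to 0$, so one cannot hope to pass to the limit pointwise in $t$; the whole argument has to be carried out at the level of time-averages against $\theta\in L^1(\R)$, and the measurability and positivity of the resulting family $\mu_t$ have to be deduced from duality and Gårding rather than from a fibrewise argument. A secondary subtlety is the uniform propagation of $\eps$-oscillation despite the $\eps^{-2}$ time-scale of the equation: this works because the singular Fourier-multiplier part of the Hamiltonian commutes with $\langle\eps D_x\rangle^s$ and only the sub-principal perturbation $\eps^2 V_{\rm ext}$ contributes a commutator, which is small enough to be absorbed.
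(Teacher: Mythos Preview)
Your argument for the first assertion is correct and is essentially the standard one (the paper does not prove the proposition but cites \cite{MaciaAv,MR:16}, where the same weak-$\star$ compactness/G\aa rding/Schwartz route is followed).

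For the second assertion there is a genuine gap. You reduce the propagation of $\eps$-oscillation to the propagation of a uniform $H^s_\eps$ bound, invoking Remark~\ref{rem:Hsepsosc}. But that remark only says that $H^s_\eps$-boundedness \emph{implies} $\eps$-oscillation; it does not say the converse, and the converse is false. (For instance, add to a fixed $L^2$ function a piece $w^\eps$ with $\|w^\eps\|_{L^2}^2=\eps$ and $\widehat{w^\eps}$ supported where $|\eps\xi|\sim \mathrm{e}^{1/\eps}$; the resulting family is $\eps$-oscillating but $\|\langle\eps D_x\rangle^s w^\eps\|_{L^2}$ blows up for every $s>0$.) Since the hypothesis of the proposition is only that $(u^\eps_0)$ and $(g^\eps(t,\cdot))$ are $\eps$-oscillating, you are not entitled to assume $H^s_\eps$ bounds on the data, and your energy argument in $H^s_\eps$ does not start.

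The fix is to run exactly the same energy/commutator computation you already have, but with a high-frequency Fourier cutoff in place of $\langle\eps D_x\rangle^s$. Apply a smooth multiplier $\chi_R(\eps D_x)$ localising to $\{|\eps\xi|\geq R\}$ to \eqref{eq:disp2}; it commutes with $\lambda(\eps D_x)$, and $[\chi_R(\eps D_x),V_{\rm ext}(t,\cdot)]=O_{\mathcal L(L^2)}(\eps)$ by symbolic calculus. The resulting energy inequality gives
\[
\|\chi_R(\eps D_x)u^\eps(t,\cdot)\|_{L^2}^2\leq \|\chi_R(\eps D_x)u^\eps_0\|_{L^2}^2 + C_R\,\eps|t| + 2\eps\int_0^{|t|}\|\chi_R(\eps D_x)g^\eps(s,\cdot)\|_{L^2}\,\|\chi_R(\eps D_x)u^\eps(s,\cdot)\|_{L^2}\,ds,
\]
so taking $\limsup_{\eps\to 0^+}$ and then $R\to\infty$ yields the $\eps$-oscillation of $(u^\eps(t,\cdot))$ locally uniformly in $t$, using only the stated hypotheses. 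This is precisely the content of Remark~\ref{remark:tx}, and from there your concluding paragraph goes through unchanged.
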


This result is proved in \cite{MaciaAv}, Theorem 1; see also Appendix B in \cite{MR:16}. 
Note that its proof uses the following observation.

\begin{remark} \label{remark:tx} Let $(u^{\eps}(t,\cdot))$ be a sequence of solutions to \eqref{eq:disp2} with $\eps$-oscillating sequence of initial data $(u^\eps_0)$ and assume $g^\eps(t,\cdot)$ is $\eps$-oscillating for all time $t\in\R$. Then,  $u^\eps(t,\cdot)$ also is $\eps$-oscillating for all $t\in\R$.
\end{remark}

\subsection{Localisation of Wigner measures on the critical set}

The fact that $(u^{\eps}(t,\cdot))$ is a sequence of solutions to~\eqref{eq:disp2} imposes restrictions on the measures $\mu_t$ that can be attained as a limit of their Wigner functions. In the region in the phase space $\R^d_x\times\R^d_\xi$ where equation \eqref{eq:disp2} is dispersive (\textit{i.e.} away from the critical points of $\lambda$) the energy of the sequence $(u^\eps(t,\cdot))$ is dispersed at infinite speed to infinity. These heuristics are made precise in the following result.

\begin{proposition}\label{prop:loc}
Let $(u^{\eps}(t,\cdot))$ be a sequence of solutions to \eqref{eq:disp2} issued from an $L^2(\R^d)$-bounded and $\eps$-oscillating sequence of initial data $(u^\eps_0)$,  and suppose that the measures $\mu_t$ are given by Proposition \ref{prop:msc}. Then, for almost every $t\in\R$ the measure $\mu_t$ is supported above the set of critical points of $ \lambda$ :
$${\rm supp}\, \mu_t\subset\Lambda=\{ (x,\xi) \in  \R^d\times\R^d \; : \; \nabla \lambda(\xi)=0\}.$$ 
\end{proposition}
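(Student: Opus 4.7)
The plan is to derive a transport-type identity satisfied by the family $\mu_t$, from which the support property follows because $\mu_t$ is a finite measure. More precisely, the aim is to prove that for every $a\in\mathcal{C}_c^\infty(\R^d\times\R^d)$ and almost every $t\in\R$,
\begin{equation*}
\int_{\R^d\times\R^d}\nabla_\xi\lambda(\xi)\cdot\nabla_x a(x,\xi)\,\mu_t(dx,d\xi)=0.
\end{equation*}
This expresses invariance of $\mu_t$ under the Hamiltonian flow of $\lambda$, which translates the $x$ variable with velocity $\nabla_\xi\lambda(\xi)$ while leaving $\xi$ fixed; a finite invariant measure for such a flow must be concentrated on the locus where the velocity vanishes, that is, over $\Lambda$.

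To produce the identity, I would differentiate $(\op_\eps(a)u^\eps(t,\cdot),u^\eps(t,\cdot))_{L^2}$ in time using equation~\eqref{eq:disp2}. Self-adjointness of $\lambda(\eps D_x)+\eps^2 V_{\rm ext}$ turns the time derivative into a commutator with $\op_\eps(a)$, and the Weyl symbolic calculus (Appendix~\ref{sec:pdo}) gives $[\lambda(\eps D_x),\op_\eps(a)] = \tfrac{\eps}{i}\op_\eps(\{\lambda,a\}) + O_{L^2\to L^2}(\eps^3)$ together with $[\eps^2 V_{\rm ext},\op_\eps(a)]=O_{L^2\to L^2}(\eps^3)$; the inhomogeneity $\eps^3 g^\eps$ contributes $O(\eps)$ after division by $\eps^2$ and pairing with $u^\eps$. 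The net outcome is
\begin{equation*}
\eps\,\frac{d}{dt}(\op_\eps(a)u^\eps,u^\eps) = (\op_\eps(\{\lambda,a\})u^\eps,u^\eps) + O(\eps),
\end{equation*}
with $\{\lambda,a\}(x,\xi)=\nabla_\xi\lambda(\xi)\cdot\nabla_x a(x,\xi)$. Multiplying by $\theta\in\mathcal{C}_c^\infty(\R)$, integrating in $t$, and moving the factor $\eps\,d/dt$ onto $\theta$ via integration by parts, the left-hand side becomes $-\eps\int\theta'(t)(\op_\eps(a)u^\eps,u^\eps)\,dt$, which is $O(\eps)$ by the uniform bound on $(\op_\eps(a)u^\eps,u^\eps)$. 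Passing to the limit $\eps=\eps_k\to 0$ via Proposition~\ref{prop:msc} and invoking the arbitrariness of $\theta$ yields the transport identity.

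For the support conclusion, fix a compact set $K\subset\{(x,\xi):\nabla\lambda(\xi)\neq 0\}$ and $a\in\mathcal{C}_c^\infty(\R^{2d})$ supported in $K$; the goal is to show $\int a\,d\mu_t=0$. Since $|\nabla\lambda|$ is bounded away from zero on the $\xi$-projection of $K$ and $a$ is compactly supported in $x$, the antiderivative
\begin{equation*}
b(x,\xi):=\int_{-\infty}^0 a\bigl(x+s\nabla\lambda(\xi),\xi\bigr)\,ds
\end{equation*}
is smooth, uniformly bounded, and satisfies $\nabla_\xi\lambda\cdot\nabla_x b=a$ pointwise, but it is not compactly supported in $x$. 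Applying the transport identity to the truncation $\chi_R(x)b(x,\xi)$, where $\chi_R(x)=\chi(x/R)$ is a standard cutoff equal to one on $B_R$, produces
\begin{equation*}
\int\chi_R(x)a(x,\xi)\,\mu_t(dx,d\xi) + \frac{1}{R}\int b(x,\xi)\,\nabla_\xi\lambda(\xi)\cdot(\nabla\chi)(x/R)\,\mu_t(dx,d\xi)=0.
\end{equation*}
Letting $R\to\infty$, the first term tends to $\int a\,d\mu_t$ by dominated convergence, while the second vanishes thanks to the prefactor $1/R$ and the finiteness of $\mu_t$. A partition-of-unity then extends this conclusion to every $a\in\mathcal{C}_c^\infty(\R^{2d}\setminus\Lambda)$. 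The main obstacle is this last step: the natural antiderivative $b$ fails to be compactly supported, so one must combine the truncation scheme above with the fact that $\mu_t$ has finite mass in order to discard the boundary contribution.
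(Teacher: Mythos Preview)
Your argument is correct. The derivation of the transport identity is essentially identical to the paper's Lemma~\ref{lemma:inv1}: differentiate $(\op_\eps(a)u^\eps,u^\eps)$ in time, convert to a commutator, apply symbolic calculus, integrate against $\theta$ and pass to the limit. The paper phrases the outcome as invariance of $\mu_t$ under the flow $\phi^1_s(x,\xi)=(x+s\nabla\lambda(\xi),\xi)$, which is equivalent to your differential identity.

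The difference lies in how the support conclusion is extracted. You construct an explicit antiderivative $b(x,\xi)=\int_{-\infty}^0 a(x+s\nabla\lambda(\xi),\xi)\,ds$ along the flow, truncate by $\chi_R(x)$, apply the transport identity, and use finiteness of $\mu_t$ to kill the boundary term. The paper instead proves an abstract lemma (Lemma~\ref{lem:classicdisp}): any finite positive measure invariant under a flow that pushes compacts to infinity linearly in $s$ must vanish off the stationary set, the proof being that one can find infinitely many disjoint translates $\phi^1_{s_k}(K)$ all of measure $\mu_t(K)$. Your approach is more hands-on and avoids stating a separate lemma; the paper's approach isolates a reusable principle that is invoked again later (for the two-microlocal measure at infinity, Proposition~\ref{prop:inv2micro}). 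Both routes rely on exactly the same ingredient, namely the finiteness of $\mu_t$.

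One small point worth making explicit in your write-up: while $b$ is bounded, its $\xi$-derivatives grow with $|x|$ (a factor of $s$ appears when differentiating through $\nabla\lambda(\xi)$), so $b\notin\mathcal{C}^\infty_c$; but $\chi_R b$ is genuinely in $\mathcal{C}^\infty_c(\R^{2d})$ since the cutoff confines $|x|$, hence the effective range of $s$, to a bounded set. This justifies applying the transport identity to $\chi_R b$.
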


\medskip

The result of Proposition~\ref{prop:loc} follows from a geometric argument : the fact that $u^\eps$ are solutions to \eqref{eq:disp2} translates in an invariance property of the measures $\mu_t$.
\begin{lemma}\label{lemma:inv1}
For almost every $t\in\R$, the measure $\mu_t$ is invariant by the flow 
$$\phi^1_s : \R^d\times\R^d\ni(x,\xi)\longmapsto (x+s\nabla \lambda(\xi),\xi)\in\R^d\times\R^d,\;\;s\in\R.$$
This means that for every function $a$ on $\R^d\times\R^d$ that is Borel measurable one has:
$$\int_{\R^d\times\R^d}a\circ\phi^1_s(x,\xi)\mu_t(dx,d\xi)=\int_{\R^d\times\R^d}a(x,\xi)\mu_t(dx,d\xi), \quad s\in\R.$$
\end{lemma}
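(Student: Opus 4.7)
The plan is to exploit the fact that $u^\eps$ solves \eqref{eq:disp2} to derive an evolution equation for the quantity $(\op_\eps(a) u^\eps(t,\cdot),u^\eps(t,\cdot))_{L^2(\R^d)}$ with $a\in\mathcal{C}_c^\infty(\R^d\times\R^d)$, and then pass to the limit $\eps\to 0$ along the subsequence produced by Proposition~\ref{prop:msc}. Writing $A_\eps:=\tfrac{1}{i\eps^2}\lambda(\eps D_x)+\tfrac{1}{i}V_{\rm ext}(t,\cdot)$ (which is skew-adjoint), the equation reads $\partial_t u^\eps = A_\eps u^\eps + \tfrac{\eps}{i}g^\eps$ and a direct computation gives
\begin{equation*}
\frac{d}{dt}\bigl(\op_\eps(a) u^\eps,u^\eps\bigr) = -\bigl([A_\eps,\op_\eps(a)]u^\eps,u^\eps\bigr) + R^\eps(t),
\end{equation*}
where the remainder $R^\eps(t)$ gathers the contributions of $g^\eps$ and is of size $O(\eps)$, locally uniformly in $t$, by the $L^2$-boundedness of $\op_\eps(a)$ (see \eqref{est:pseudo}) and of $(g^\eps(t,\cdot))$.

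Next I would use semiclassical symbolic calculus (cf.\ Appendix \ref{sec:pdo}) to expand the commutator. Since $\lambda\in\mathcal{C}^\infty$ with polynomial growth and $V_{\rm ext}\in\mathcal{C}_b^\infty$, the Weyl calculus yields
\begin{equation*}
[\lambda(\eps D_x),\op_\eps(a)] = \frac{\eps}{i}\op_\eps\bigl(\{\lambda,a\}\bigr) + O_{L^2\to L^2}(\eps^3), \qquad [V_{\rm ext},\op_\eps(a)] = \frac{\eps}{i}\op_\eps\bigl(\{V_{\rm ext},a\}\bigr) + O_{L^2\to L^2}(\eps^3),
\end{equation*}
so that, after dividing by $i\eps^2$ and $i$ respectively and assembling,
\begin{equation*}
[A_\eps,\op_\eps(a)] = -\frac{1}{\eps}\op_\eps\bigl(\{\lambda,a\}\bigr) + O_{L^2\to L^2}(\eps).
\end{equation*}
Multiplying the evolution identity by $\eps$ and integrating against a test function $\theta\in\mathcal{C}_c^\infty(\R)$ in time (with an integration by parts on the $t$-derivative) then gives
\begin{equation*}
-\eps\int_\R \theta'(t)\bigl(\op_\eps(a)u^\eps(t),u^\eps(t)\bigr)\,dt = \int_\R\theta(t)\bigl(\op_\eps\!\bigl(\{\lambda,a\}\bigr)u^\eps(t),u^\eps(t)\bigr)\,dt + O(\eps).
\end{equation*}

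Now I would pass to the limit along the subsequence $(\eps_k)$ of Proposition~\ref{prop:msc}. The left-hand side is $O(\eps)$ because $(\op_\eps(a)u^\eps(t),u^\eps(t))$ is uniformly bounded in $\eps$ and $t$, whereas the right-hand side converges to $\int_\R\theta(t)\int_{\R^d\times\R^d}\{\lambda,a\}(x,\xi)\mu_t(dx,d\xi)\,dt$. As $\theta$ is arbitrary, I deduce that for a.e.\ $t\in\R$,
\begin{equation*}
\int_{\R^d\times\R^d}\{\lambda,a\}(x,\xi)\,\mu_t(dx,d\xi) = 0, \qquad \forall a\in\mathcal{C}_c^\infty(\R^d\times\R^d).
\end{equation*}
Because $\lambda$ depends only on $\xi$, the Poisson bracket simplifies to $\{\lambda,a\}(x,\xi)=\nabla\lambda(\xi)\cdot\nabla_x a(x,\xi)$, which is exactly the action on $a$ of the infinitesimal generator of the flow $\phi^1_s$. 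Differentiating $s\mapsto\int a\circ\phi^1_s\,d\mu_t$ and using the identity above (applied to $a\circ\phi^1_s$ in place of $a$, which is legitimate since the flow is smooth and preserves the class of test functions after a cutoff) shows this map is constant, hence $\int a\circ\phi^1_s\,d\mu_t=\int a\,d\mu_t$ for every $s\in\R$ and every $a\in\mathcal{C}_c^\infty$. A standard density/monotone class argument, using that $\mu_t$ is a finite Radon measure, then extends the identity to all bounded Borel $a$.

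The main technical obstacle is the order of magnitude of the commutator: the $1/\eps$ blow-up in $[A_\eps,\op_\eps(a)]$ is exactly compensated by the factor $\eps$ coming from the time averaging, so that the limit extracts the principal symbol of the commutator but discards all lower-order contributions. The $V_{\rm ext}$ and $g^\eps$ terms are then harmless, since after multiplication by $\eps$ they are $O(\eps)$ and $O(\eps^2)$ respectively; checking that all these remainders are truly uniform in $t$ on compact time intervals (via the uniform $L^2$-bound on $u^\eps(t,\cdot)$ inherited from energy conservation modulo the forcing $\eps^3 g^\eps$) is the only point where some care is required.
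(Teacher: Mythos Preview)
Your proposal is correct and follows essentially the same route as the paper: both use the equation to compute the time derivative of $(\op_\eps(a)u^\eps,u^\eps)$, identify the leading commutator $\tfrac{i}{\eps}[\lambda(\eps D_x),\op_\eps(a)]=\op_\eps(\{\lambda,a\})+O(\eps)$ via symbolic calculus, multiply by $\eps$ and integrate by parts against $\theta\in\mathcal{C}_c^\infty(\R)$, and conclude that $\int\{\lambda,a\}\,d\mu_t=0$. Your write-up is in fact slightly more explicit than the paper's in its treatment of the $V_{\rm ext}$ and $g^\eps$ contributions and in spelling out the passage from the infinitesimal identity to full flow invariance and to Borel test functions.
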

This result is part of Theorem 2 in \cite{MaciaAv}. We reproduce the argument here for the reader's convenience, since we are going to use similar techniques in the sequel.

\begin{proof}[Proof of Lemma \ref{lemma:inv1}]
It is enough to show that, for all $a\in{\mathcal C}_c^\infty(\R^d\times\R^d)$ and $\theta\in{\mathcal C}_c^\infty(\R)$, the quantity 
$$R^\eps(\theta,a):=\int_{\R\times\R^d\times\R^d} \theta(t) \left.\frac{d}{ds}(a\circ\phi^1_s(x,\xi) )\right|_{s=0}W_{u^{\eps_k}(t,\cdot)}^{\eps_k}(x,\xi)dx\,d\xi\, dt$$tends to $0$ for the subsequence $\eps_k$  of Proposition~\ref{prop:msc}.
Note that
$$\left.\frac{d}{ds}(a\circ\phi^1_s)\right|_{s=0}=\nabla_\xi\lambda\cdot\nabla_x a=\{\lambda,a\};$$
therefore, by the symbolic calculus of semiclassical pseudodifferential operators, Proposition~\ref{prop:symbol}: 
\begin{eqnarray*}
\op_\eps\left(\left.\frac{d}{ds}(a\circ\phi^1_s)\right|_{s=0}\right) & = & \frac{i}{\eps} \left[ \lambda(\eps D)\;,\;\op_\eps(a)\right]+O_{{\mathcal L}\left(L^2(\R^d)\right)}(\eps)
\end{eqnarray*}
and, using the fact that $u^\eps$ solves \eqref{eq:disp2}:
$$\displaylines{
\qquad \frac{i}{\eps}\int_\R\theta(t) \left(  \left[ \lambda(\eps D),\op_\eps(a)\right]  u^\eps(t,\cdot),u^\eps(t,\cdot)\right) dt+O(\eps)
\hfill\cr\hfill
= -\eps \int_\R \theta(t) \frac{d}{dt} \left( \op_\eps(a)  u^\eps(t,\cdot),u^\eps(t,\cdot)\right) dt
= \eps  \int_\R \theta'(t)\left( \op_\eps(a)  u^\eps(t,\cdot),u^\eps(t,\cdot)\right) dt
= O(\eps).\cr}$$
This estimate together with identity \eqref{eq:wbdd} show that $R^\eps(\theta,a)=O(\eps)$, which gives the result that we wanted to prove. \end{proof}

Proposition~\ref{prop:loc} follows easily from Lemma \ref{lemma:inv1} and the following elementary fact.

\begin{lemma}\label{lem:classicdisp} Let $\Omega\subset\R^d$ and $\Phi_s:\R^d\times\Omega\To\R^d\times\Omega$ a flow satisfying: for every compact $K\subset\R^d\times\Omega$ such that $K$ contains no stationary points of $\Phi$ there exist constants $\alpha,\beta>0$ such that:
$$\alpha |s| - \beta \leqslant |\Phi_s(x,\xi)| \leqslant \alpha|s|+\beta,\quad\forall(x,\xi)\in K.$$ 
Let $\mu$ be a finite, positive Radon measure on $\R^d\times\Omega$ that is invariant by the flow $\Phi_s$. Then $\mu$ is supported on the set of stationary points of  $\Phi_s$.
\end{lemma}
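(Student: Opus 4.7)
The plan is to argue by contradiction: if $\mu$ charges the open complement of the stationary set, then a compact set $K$ of positive measure containing no stationary point will, by the growth hypothesis, have its orbit $\Phi_s(K)$ escape every fixed ball; combined with invariance and finiteness of $\mu$ this is impossible.

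First, I would denote by $\mathcal{F}$ the set of stationary points of $\Phi_s$ and assume for contradiction that $\mu\bigl((\R^d\times\Omega)\setminus\mathcal{F}\bigr) > 0$. Since $\Phi_s$ is continuous in $(x,\xi)$, the set $\mathcal{F}$ is closed (as an intersection of closed fixed-point sets over $s\in\R$), so its complement is open. Inner regularity of the finite Radon measure $\mu$ then provides a compact $K \subset (\R^d\times\Omega)\setminus\mathcal{F}$ with $\mu(K) > 0$.

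Next, applying the growth hypothesis to this $K$ furnishes constants $\alpha, \beta > 0$ such that $|\Phi_s(x,\xi)| \geqslant \alpha|s| - \beta$ for every $(x,\xi) \in K$ and every $s\in\R$. Writing $B_R$ for the closed ball of radius $R$ in $\R^d\times\Omega$, this forces $\Phi_s(K) \cap B_R = \emptyset$ as soon as $|s| > (R+\beta)/\alpha$. Since $\mu$ is finite, $\mu((\R^d\times\Omega)\setminus B_R) \to 0$ as $R\to\infty$, so one may fix $R$ large enough that $\mu((\R^d\times\Omega)\setminus B_R) < \mu(K)/2$.

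The contradiction is now immediate: invariance of $\mu$ under the flow, together with the fact that $\Phi_s$ is a bijection with inverse $\Phi_{-s}$, gives $\mu(\Phi_s(K)) = \mu(K)$ for every $s\in\R$, whereas the previous paragraph yields $\mu(\Phi_s(K)) \leqslant \mu((\R^d\times\Omega)\setminus B_R) < \mu(K)/2$ for all sufficiently large $|s|$. The argument is essentially kinematic and presents no real obstacle; the only mildly delicate point is that continuity of the flow is implicitly used, both to ensure that $\mathcal{F}$ is closed (so that inner regularity produces $K$) and that $\Phi_s(K)$ is Borel so that $\mu(\Phi_s(K))$ makes sense.
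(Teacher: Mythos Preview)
Your proof is correct and follows essentially the same outline as the paper's---find a compact $K$ with no stationary points and positive measure, then derive a contradiction from invariance plus finiteness of $\mu$---but the mechanism for the contradiction is different. The paper uses \emph{both} inequalities $\alpha|s|-\beta\leq|\Phi_s(x,\xi)|\leq\alpha|s|+\beta$ to produce a sequence $s_k\to+\infty$ for which the images $\Phi_{s_k}(K)$ are pairwise disjoint (each lies in an annulus determined by the two bounds), and then the counting argument $N\mu(K)=\mu\bigl(\bigcup_{k=1}^N\Phi_{s_k}(K)\bigr)\leq\mu(\R^d\times\Omega)<\infty$ forces $\mu(K)=0$. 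You instead use only the lower bound to push $\Phi_s(K)$ outside any fixed ball, and then invoke that a finite measure has small tails. Your route is marginally more economical in that the upper bound is never needed; the paper's route is the classical ``infinitely many disjoint copies of equal mass'' trick. Both are short and entirely adequate for the lemma.
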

\begin{proof}
It suffices to show that $\mu(K)=0$ for every compact set $K\subset\R^d\times\Omega$ as in the statement of the lemma. By the assumption made on $\Phi_s$, it is possible to find $s_k\to+\infty$ such that $\Phi_{s_k}(K)$, $k\in\N$, are mutually disjoint. The invariance property of $\mu$ implies that $\mu(\Phi_{s_k}(K))=\mu(K)$ and therefore, for every $N>0$:
$$\mu\left(\bigcup_{k=1}^N\Phi_{s_k}(K)\right)=N\mu(K).$$
Since $\mu$ is finite, we must have $\mu(K)=0$.
\end{proof} 
\medskip


 \section{Two-microlocal Wigner distributions}\label{sec:two_microlocal}

The localization result for semiclassical measures that we obtained in the preceding section is still very far from the conclusions of Theorems \ref{theo:disc} and \ref{theo:nondis}. In particular, Proposition \ref{prop:loc} does not explain how the measures $\mu_t$ depend on the sequence of initial data of the sequence of solutions $(u^\eps(t,\cdot))$. For obtaining more information, we use two-microlocal tools that  
we introduce in a rather general framework in this section.

\medskip 

From now on, we assume that $X$ is a connected, closed embedded submanifold of $(\R^d)^*$ with codimension $p>0$. Given any $\sigma\in X$, $T_\sigma X$ and $N_\sigma X$ will stand for the cotangent and normal spaces of $X$ at $\sigma$ respectively (as defined in~(\ref{def:T*X}) and~(\ref{def:NX})). The tubular neighborhood theorem (see for instance \cite{Hirsch}) ensures that there exists an open neighborhood $U$ of $\{(\sigma,0)\,:\,\sigma\in X\}\subseteq N X$ such that the map:
$$
U\ni (\sigma,v)\longmapsto \sigma+v\in(\R^d)^*,
$$
is a diffeomorphism onto its image $V$. Its inverse is given by:
$$
\begin{array}{ccl}
V\ni\xi \longmapsto (\sigma(\xi),\xi-\sigma(\xi))\in U,
\end{array}
$$
for some  smooth map $\sigma:V\longrightarrow X$. 
When $X=\{\xi_0\}$ consists of a single point, the function $\sigma$ is constant, identically equal to $\xi_0$.

\medskip

We extend the phase space $T^*\R^d:=\R^d_x\times(\R^d)^*_\xi$ with a new variable $\eta\in \overline{ \R^d}$, where $\overline{\R^d}$ is the compactification of $\R^d$ obtained by adding a sphere ${\bf S}^{d-1}$ at infinity. The test functions associated with this extended phase space are functions $a\in{\mathcal C}^\infty(T^*\R^d_{x,\xi}\times\R^d_\eta)$ which satisfy the two following properties:
\begin{enumerate}
\item There exists a compact $K \subset T^*\R^d$ such that, for all $\eta\in\R^p$, the map $(x,\xi)\mapsto a(x,\xi,\eta)$ is a smooth function compactly supported in $K$.
\item There exists a smooth function $a_\infty$ defined on $T^*\R^d\times{\bf S}^{d-1}$ and $R_0>0$ such that, if $|\eta|>R_0$, then $a(x,\xi,\eta)=a_\infty(x,\xi,\eta/|\eta|)$.  
\end{enumerate}
We denote by $\mathcal{A}$ the set of such functions and for $a\in  \mathcal{A}$ we write:
\begin{equation}\label{def:aeps}
a_\eps(x,\xi):=a\left(x,\xi,\frac{\xi-\sigma(\xi)}{\epsilon}\right).
\end{equation}
Given $f\in L^2(\R^d)$, we define the two-microlocal Wigner distribution $W^{X,\eps}_{f}\in\mathcal{D}'(\R^d\times V\times \overline{\R^d})$ by:
\begin{equation}
\left\langle W^{X,\eps}_{f},a\right\rangle :=(\op_\eps(a_\eps) f|f)_{L^2(\R^d)},\quad \forall a\in {\mathcal A}.
\end{equation}
Since $a_\eps(x,\eps\xi) = a\left(x,\eps \xi,\frac{\eps \xi-\sigma(\eps\xi)}{\epsilon}\right)$ has derivatives that are uniformly bounded in~$\eps$,  the Calderón-Vaillancourt theorem (see Appendix~\ref{sec:pdo}) gives the uniform boundedness of the family of operators $(\op_\eps(a_\eps))_{\eps>0}$ in $L^2(\R^d)$. In addition, any function $a\in {\mathcal C}^\infty_c(\R^d\times V)$ can be naturally identified to a function in ${\mathcal A}$ which does not depend on the last variable. For such $a$, one clearly has 
$$\left\langle W^{X,\eps}_{f},a\right\rangle=\int_{\R^d\times\R^d}a(x,\xi)W^\eps_f(x,\xi)dx\,d\xi.$$
Putting the above remarks together, one obtains the following.
\begin{proposition}\label{prop:2microdis} Let $(f^\eps)_{\eps>0}$ be bounded in $L^2(\R^d)$; suppose in addition that this sequence has a semiclassical measure $\mu$. Then, $(W^{X,\eps}_{f^\eps})_{\eps>0}$ is a bounded sequence in $\mathcal{D}'(\R^d\times V\times \overline{\R^d})$ whose accumulation points $\mu^X$ satisfy:
\[
\left\langle \mu^{X},a\right\rangle=\int_{\R^d\times\R^d}a(x,\xi)\mu(dx,d\xi),\quad \forall a\in {\mathcal C}^\infty_c(\R^d\times V).
\]
\end{proposition}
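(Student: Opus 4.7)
The plan is to reduce everything to Calder\'on--Vaillancourt for a suitably rescaled symbol, and then identify the limit on symbols independent of $\eta$ by direct comparison with the ordinary Wigner transform. First, I would show that $(W^{X,\eps}_{f^\eps})$ is bounded in $\mathcal{A}'$, which by the definition of the pairing amounts to the estimate
$$ \bigl|\langle W^{X,\eps}_{f^\eps},a\rangle\bigr| = \bigl|(\op_\eps(a_\eps)f^\eps|f^\eps)_{L^2}\bigr| \le \|\op_\eps(a_\eps)\|_{\mathcal L(L^2)}\, \sup_\eps\|f^\eps\|_{L^2}^2, $$
so the task reduces to controlling the operator norm of $\op_\eps(a_\eps)$ uniformly in $\eps$. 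By the semiclassical Calder\'on--Vaillancourt theorem recalled in Appendix~\ref{sec:pdo}, this in turn reduces to uniform control of finitely many partial derivatives of the rescaled symbol
$$ b_\eps(x,\xi) := a_\eps(x,\eps\xi) = a\!\left(x,\eps\xi,\frac{\eps\xi-\sigma(\eps\xi)}{\eps}\right). $$

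Next, I would verify that the relevant seminorms of $b_\eps$ are bounded independently of $\eps$. Derivatives in $x$ are trivially controlled because $a$ is smooth and compactly supported in $(x,\xi)$. Derivatives in $\xi$ involve the chain rule applied to two slots: differentiating the second slot gives factors of $\eps$, while differentiating the third slot gives factors of $\eps \cdot \eps^{-1}(I-d\sigma(\eps\xi)) = O(1)$. The delicate point is that, as $\eps\to0$, the argument $\eta=(\eps\xi-\sigma(\eps\xi))/\eps$ ranges over all of $\R^d$, including arbitrarily large values, where naive application of the chain rule would lose powers of $\eps^{-1}$ for each $\partial_\eta$. Here the homogeneity-at-infinity condition (item (2) in the definition of $\mathcal A$) is essential: for $|\eta|>R_0$ one has $a(x,\xi,\eta)=a_\infty(x,\xi,\eta/|\eta|)$, so $\partial_\eta^\gamma a = O(|\eta|^{-|\gamma|})$, and the $\eps^{-1}$ factors produced by differentiating $\eta = (\eps\xi-\sigma(\eps\xi))/\eps$ in $\xi$ are absorbed by these negative powers of $|\eta|$ (since $|\eta|\ge R_0$ in that region implies $|\eps\xi-\sigma(\eps\xi)|\ge R_0\eps$). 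Combined with the obvious bounds on the compact region $|\eta|\le R_0$, this yields uniform-in-$\eps$ bounds on all derivatives of $b_\eps$, hence uniform $L^2$-boundedness of $\op_\eps(a_\eps)$.

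With boundedness in hand, Banach--Alaoglu supplies weak-$\star$ accumulation points $\mu^X$ along subsequences. To identify $\mu^X$ on $a\in\mathcal C_c^\infty(\R^d\times V)$, viewed as an element of $\mathcal A$ independent of the $\eta$ variable, observe that $a_\eps=a$ for every $\eps$, whence
$$ \langle W^{X,\eps}_{f^\eps},a\rangle = (\op_\eps(a)f^\eps|f^\eps)_{L^2} = \int_{\R^d\times\R^d} a(x,\xi)\,W^\eps_{f^\eps}(x,\xi)\,dx\,d\xi. $$
Passing to the limit along the subsequence for which $W^\eps_{f^\eps}\rightharpoonup\mu$ (given by hypothesis), the right-hand side converges to $\int a\,d\mu$, giving the claimed identity. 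The main obstacle is the derivative analysis of $b_\eps$: without the homogeneity at infinity of elements of $\mathcal A$, the $\eps^{-1}$ factors from differentiating the third slot would blow up and Calder\'on--Vaillancourt would fail to give a uniform bound. The sphere at infinity in $\overline{\R^d}$ is precisely what is needed to make the two-microlocal symbolic calculus compatible with $L^2$-boundedness.
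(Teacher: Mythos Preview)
Your approach is correct and matches the paper's own brief argument: uniform bounds on the derivatives of $b_\eps(x,\xi)=a_\eps(x,\eps\xi)$ via Calder\'on--Vaillancourt, then direct identification on $\eta$-independent symbols with the ordinary Wigner pairing. One minor point of exposition: your chain-rule computation already shows $\partial_\xi b_\eps = (\partial_\xi a)\cdot O(\eps) + (\partial_\eta a)\cdot(I-d\sigma(\eps\xi)) = O(1)$ with no $\eps^{-1}$ factors present, so the homogeneity at infinity in the definition of $\mathcal A$ is needed only to keep $\partial_\eta^\gamma a$ bounded as $|\eta|\to\infty$, not to cancel negative powers of $\eps$.
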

The distributions $ \mu^{X}$ turn out to have additional structure (they are not positive measures on $\R^d\times V\times \overline{\R^d}$, though) and can be used to give a more precise description of the restriction $\mu\rceil_{\R^d\times X}$ of semiclassical measures. The measure $\mu^{X}$ decomposes into two parts: a \textit{compact} part, which is essentially the restriction of $\mu^{X}$ to the interior $\R^d\times V\times \R^d$ of  $\R^d\times V\times \overline{\R^d}$, and \emph{a part at infinity}, which corresponds to the restriction to the sphere at infinity $\R^d\times V\times\mathbf{S}^{d-1}$. 


\subsection{The compact part}
On the neighborhood of any point $\sigma \in X$, one may find a system of $p$ equations on $\R^d$ for which $X$ is the zero set. Let $\varphi(\xi)=0$ be such a system in an open set $\Omega$ that we can assume included in the set $V$ where the map $\sigma$ is defined. Then, a parametrization of $N_\sigma X$ associated to this system of equations is 
$$N_\sigma X= \{ \, ^td\varphi(\sigma) z,\;\; z\in\R^p\}.$$
For $\sigma\in X$, we define functions of $L^2(N_\sigma X)$ as square integrable functions 
$$\R^p\ni z\mapsto f(z),$$
where $z$ is the parameter of a parametrization of $N_\sigma X$ that we fixed \emph{a priori}.

Besides, one associates with the system $\varphi(\xi)=0$ a smooth map $\xi\mapsto B(\xi)$ from the neighborhood~$\Omega$ of $\sigma$ into the set of $d\times p$ matrices such that 
\begin{equation}\label{def:B}
\xi-\sigma(\xi)=B(\xi)\varphi(\xi),\;\;\xi\in\Omega.
\end{equation}
Given a function $a\in {\mathcal C}^\infty_c(\R^d\times \Omega\times \R^d)$ and a point $(\sigma,v)\in TX$, we can use the system of coordinates $\varphi(\xi)=0$ to define an operator acting on $f\in L^2(N_\sigma X)$ given by:
$$Q_a^\varphi (\sigma,v)f(z)=\int_{\R^p\times \R^p}a\left(v+ \, ^td\varphi(\sigma) \frac{z+y}{2},\sigma,  B(\sigma) \eta\right)f(y){\rm e}^{i\eta\cdot (z-y)}\frac{d\eta\,dy}{(2\pi)^p}.$$
In other words, $Q_a^\varphi(\sigma,v)$ is obtained from $a$ by applying the non-semiclassical Weyl quantization to the symbol 
$$(z,\eta)\mapsto a\left(v+ \, ^td\varphi(\sigma) z ,\sigma,  B(\sigma)  \eta\right)\in {\mathcal C}^\infty_c(\R^p\times\R^p).$$
We write
$$Q_a^\varphi (\sigma,v)= a^W\left(v+ \, ^td\varphi(\sigma) z,\sigma,  B(\sigma) D_z\right).$$

\medskip 

If one changes the system of coordinates into $\widetilde\varphi(\xi)=0$ on some open neighborhood~$\widetilde\Omega$ of~$\sigma$, then, there exists  a smooth  map $R(\xi)$ defined on the open set $\Omega\cap\widetilde\Omega$ (where both system of coordinates can be used), and valued in the set of invertible $p\times p$ matrices, such that $\widetilde \varphi(\xi)=R(\xi) \varphi(\xi)$.
One then observe that the matrix $\widetilde B(\xi)$ associated with the choice of $\widetilde \varphi$ is given by $\widetilde B(\xi)=B(\xi)R(\xi)^{-1}.$ Besides, for $a\in {\mathcal C}^\infty_c(\R^d\times ( \Omega\cap \widetilde\Omega) \times \R^d)$,
\begin{eqnarray*}
 Q_a^{\widetilde \varphi} (\sigma,v)& = &
\int_{\R^p\times \R^p}a\left(v+ \, ^td\widetilde\varphi(\sigma) \frac{z+y}{2},\sigma,  \widetilde B(\sigma) \eta\right)f(y){\rm e}^{i\eta\cdot (z-y)}\frac{d\eta\,dy}{(2\pi)^p}\\
& = & \int_{\R^p\times \R^p}a\left(v+ \, ^td\varphi(\sigma) \,^tR(\sigma) \frac{z+y}{2},\sigma,  B(\sigma)R(\sigma)^{-1} \eta\right)f(y){\rm e}^{i\eta\cdot (z-y)}\frac{d\eta\,dy}{(2\pi)^p}.
\end{eqnarray*}
We obtain 
$$ Q_a^{\widetilde \varphi} (\sigma,v)= U(\sigma)Q_a^{\varphi}(\sigma,v)U^*(\sigma),$$
where $U(\sigma)$ is the unitary operator of $L^2(N_\sigma X)\sim L^2(\R^p)$ associated with the linear map from~$\R^p$ into itself :
$ z\mapsto \, ^tR(\sigma) z$. More precisely, 
$$\forall f\in L^2(\R^p),\;\; U(\sigma) f(z)= \left|{\rm det} \,R(\sigma)\right|^{p\over 2} f(\,^tR(\sigma)z).$$
This map is the one associated with the change of parametrization on $N_\sigma X$ induced by turning~$\varphi$ into $\widetilde\varphi$, and the 
 map $(z,\zeta)\mapsto (\,^tR(\sigma)z,R(\sigma)^{-1}\zeta)$ is a symplectic transform of the cotangent of $\R^p$. This is the standard rule of transformation of pseudodifferential operators through linear change of variables (see \cite{AlinhacGerard} for an example or any textbook about pseudodifferential calculus).  

\medskip 

Because of this invariance property with respect to the change of system of coordinates, we shall say that $a$ defines an operator $Q_a(\sigma,v)$ on $L^2(N_\sigma X)$.
Clearly, $Q_a(\sigma,v)$ is smooth and compactly supported in $(\sigma,v)$; moreover, $Q_a(\sigma,v)\in\mathcal{K}(L^2(N_\sigma X))$, for every $(\sigma,v)\in TX$, 
where $\mathcal{K}(L^2(N_\sigma X))$ stands for the space of compact operators on $L^2(N_\sigma X)$.

\begin{proposition}\label{prop:compact}
Let $\mu^X$ be given by Proposition \ref{prop:2microdis}. Then there exist a positive measure $\nu$ on $T^*X$ and a measurable family:
\[
M:T^*X\ni(\sigma,v)\longmapsto M(\sigma,v)\in \mathcal{L}^1_+(L^2(N_\sigma X)),
\]
satisfying
\[
\Tr_{L^2(N_\sigma X)} M(\sigma,v)=1,\quad \text{ for }\nu\text{-a.e. }(\sigma,v)\in T^*X,
\]
and such that, for every $a\in {\mathcal C}^\infty_c(\R^d\times V\times \R^d)$ one has:
$$\left\langle \mu^{X},a\right\rangle = \int_{T^*X}\Tr_{L^2(N_\sigma X)}(Q_a(\sigma,v)M(\sigma,v))\nu(d\sigma,dv).$$
\end{proposition}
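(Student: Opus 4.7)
My plan is to recognize the functional $a \mapsto \langle \mu^X, a \rangle$, restricted to symbols $a \in \mathcal{C}_c^\infty(\R^d \times V \times \R^d)$, as a positive trace-class operator-valued measure on $T^*X$, and then to disintegrate it. This is carried out in three steps: (i) an operator-valued Gårding-type inequality adapted to the two-microlocal setting, which yields positivity; (ii) a Riesz representation identifying $\mu^X$ with an operator-valued measure $\rho$ on $T^*X$; (iii) a Radon-Nikodym decomposition $\rho = M\,\nu$ into a positive scalar measure $\nu$ and a measurable field of normalized trace-class operators $M(\sigma,v)$.

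\emph{Positivity.} The key lemma is that if $a \in \mathcal{C}_c^\infty(\R^d \times V \times \R^d)$ is such that $Q_a(\sigma,v) \geq 0$ on $L^2(N_\sigma X)$ for every $(\sigma,v) \in T^*X$, then there exists $C>0$ such that
$$\op_\eps(a_\eps) \geq -C\sqrt{\eps}\,\mathrm{Id}_{L^2(\R^d)},$$
which implies $\langle \mu^X, a \rangle \geq 0$ by passing to the limit in the definition $\langle W^{X,\eps}_{f^\eps}, a \rangle = (\op_\eps(a_\eps) f^\eps, f^\eps)$. To prove this inequality I would use a partition of unity on $X$ to reduce to a neighborhood of a single point $\sigma_0 \in X$, straighten $X$ via the tubular diffeomorphism so that locally $X = \{\varphi = 0\}$ with $\varphi$ a coordinate function, and then perform a change of variables in the $\xi$ direction to expose the two-scale structure $a_\eps(x,\xi) = a(x,\xi, (\xi-\sigma(\xi))/\eps)$. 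Rescaling the conjugate variable $z \leftrightarrow \eta$ in the normal directions identifies $\op_\eps(a_\eps)$, modulo a bounded error, with an operator whose operator-valued Weyl symbol on $T^*X$ equals precisely $Q_a(\sigma,v)$ acting on $L^2(N_\sigma X)$; applying the sharp Gårding inequality in this operator-valued setting yields the stated bound. The invariance of $Q_a$ under a change of defining equations $\widetilde\varphi = R\varphi$ (established above by conjugation with the unitary $U(\sigma)$) is what ensures the local constructions patch consistently across charts.

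\emph{Representation and disintegration.} Applying the positivity to $\pm a$ shows that $\langle \mu^X, a \rangle$ depends only on the operator-valued field $(\sigma,v) \mapsto Q_a(\sigma,v)$. Hence $\mu^X$ defines a positive continuous linear functional on compactly supported sections of the bundle $\bigsqcup_{(\sigma,v)\in T^*X}\mathcal{K}(L^2(N_\sigma X))$. The operator-valued Riesz-Markov theorem (as recalled in the appendix on trace-class operator-valued measures) then produces a positive trace-class operator-valued Radon measure $\rho$ on $T^*X$ representing this functional. Setting $\nu := \Tr\rho$ gives a finite positive scalar Radon measure on $T^*X$, and the Radon-Nikodym theorem for operator-valued measures provides a $\nu$-measurable field $(\sigma,v) \mapsto M(\sigma,v)$ in $\mathcal{L}^1_+(L^2(N_\sigma X))$ with $\Tr M(\sigma,v) = 1$ $\nu$-a.e.\ such that $\rho = M\nu$. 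Writing out the resulting representation formula and noting that $\Tr_{L^2(N_\sigma X)}(Q_a(\sigma,v)M(\sigma,v))$ is well defined because $Q_a$ is compact and $M$ is trace-class concludes the proof.

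The main obstacle is the two-microlocal Gårding inequality in Step~1: one cannot simply invoke the usual sharp Gårding for $\op_\eps(a_\eps)$ because the symbol $a_\eps$ has non-uniform behavior as $\eps \to 0$ in directions transverse to $X$. The proof really requires separating the two scales by the normal-variable Fourier rescaling sketched above, which is where the geometric hypotheses on $X$ (closed embedded submanifold with a globally defined tubular neighborhood) enter essentially, and where the invariance calculation for $Q_a$ under coordinate changes is used to glue local pieces into the global statement.
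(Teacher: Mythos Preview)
Your approach is correct but organized differently from the paper's. The paper does not prove positivity via an operator-valued G\aa rding inequality; instead it first performs the coordinate flattening you describe (this is exactly its Lemma~4.3, proved in Appendix~C), conjugating $\op_\eps(a_\eps)$ by an explicit isometry $\mathcal U_\eps$ so that the problem becomes two-microlocal concentration on the \emph{linear} subspace $\{\xi'=0\}\subset\R^d$. It then invokes the already-established vector-space case from \cite{CFMProc}, where the pair $(M_0,\nu_0)$ is produced on $T^*\R^{d-p}$ with fixed fibre $L^2(\R^p)$, and finally pulls these objects back to $T^*X$ by the parametrization $\theta''\mapsto\Phi(0,\theta'')$. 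Your route---G\aa rding positivity, then Riesz--Markov and Radon--Nikodym for operator-valued measures---is precisely how one would \emph{prove} the cited flat result, so in effect you are unpacking that black box rather than reducing to it. The technical core is the same in both cases (the flattening lemma, needed either to reduce to the vector-space situation or to justify your G\aa rding step), but the paper's packaging is shorter since it reuses \cite{CFMProc}, while yours is more self-contained and makes the measure-theoretic structure explicit. One small point: your claim that positivity on $\pm a$ shows $\langle\mu^X,a\rangle$ depends only on $Q_a$ is correct, but to then invoke Riesz--Markov you also need that $a\mapsto Q_a$ has dense range in the compactly supported sections of $\bigsqcup_{(\sigma,v)}\mathcal K(L^2(N_\sigma X))$; this is straightforward but should be stated.
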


\begin{proof}
We suppose that we are given a local system of $p$ equations of $X$ by $\varphi(\xi)=0$. Put $\xi = (\xi',\xi'') \in \R^p \times \R^{d-p}$. Without loss of generality, we may assume that $d_{\xi'} \varphi(\xi)$ is invertible. We consider the smooth valued function $B$ satisfying  $\xi-\sigma(\xi)=B(\xi) \varphi(\xi)$ and we introduce the local diffeomorphism
$$\Phi : \left(\varphi(\xi),\xi''\right)\mapsto \xi .$$
Note that if $\xi=\Phi(\zeta)$, $\zeta=(\zeta',\zeta'')$, we have $\zeta'=\varphi(\xi)=\varphi(\Phi(\zeta))$ and $\zeta''=\xi''$.
We use this diffeomorphism according to the next lemma.

\begin{lemma}\label{lem:geometry}
For all $f\in L^2(\R^d)$ and $a\in{\mathcal A}$,
$$  \left(\op_\eps(a_\eps)f\;,\;f\right) = \left( \op_\eps\left(a\left(^t d\Phi(\xi)^{-1} x,\Phi(\xi),B\left(\Phi(\xi)\right) {\xi'\over\eps}\right)\right) {\mathcal U}_\eps f\;,\;{\mathcal U}_\eps f\right) + O(\eps) \| f\| ^2$$
where $f\mapsto {\mathcal U}_\eps f$ is an isometry of $L^2(\R^d)$.
\end{lemma}

The proof of this lemma is in the Appendix~\ref{sec:app_proof}.
This lemma reduces the problem  to the analysis of the concentration of the bounded family $\widetilde f^\eps =({\mathcal U}_\eps f)$ on the submanifold $\Lambda_0=\{\xi'=0\}$ which has the additional property to be a vector space. This special case  has been studied in~\cite{CFMProc} where it is proved (see pages 96-97, Proposition~2)  that up to a subsequence, there exist a positive measure $\nu_0$ on $T^*\R^{d-p}$ and a measurable family of trace~$1$ operators:
\[
M_0:T^*\R^{d-p}\ni(\sigma,v)\longmapsto M_0(\sigma,v)\in \mathcal{L}^1_+(L^2(\R^p)),
\]
satisfying for any $b\in{\mathcal C}_c^\infty(\R^{2d+p})$,
$$\displaylines{\qquad \lim_{\eps\rightarrow 0} \left(\op_\eps(b_\eps)\widetilde f^\eps\;,\;\widetilde f^\eps\right) 
\hfill\cr\hfill
 =\int_{\R^{d-p}\times\R^{d-p}} {\rm Tr}_{L^2(\R^p)} \Bigl( b^W\left((z,u''),(0,\theta''), D_z\right) 
\, M_0(u'',\theta'')\Bigr)d\nu_0(du'',d\theta'').\cr}$$
The reader will find in Appendix~\ref{sec:ovm} comments on the operator-valued families. 
Therefore, for compactly supported $a\in{\mathcal A}$, and choosing  $b(x,\xi,\eta')= a\left(^t d\Phi(\xi)^{-1} x,\Phi(\xi),B\left(\Phi(\xi)\right)\eta'\right)$, one obtains 
$$\displaylines{ \lim_{\eps\rightarrow 0} \left(\op_\eps(a_\eps)f^\eps\;,\;f^\eps\right)  \hfill\cr\hfill
=\int_{\R^{d-p}\times\R^{d-p}} {\rm Tr}_{L^2(\R^p)} \Bigl( a^W\left(\,^t d\Phi(0,\theta'')^{-1}(z,u''), \Phi(0,\theta''),B(\Phi(0,\theta'')) D_z\right) 
\hfill\cr\hfill \times\, M_0(u'',\theta'')\Bigr)d\nu_0(du'',d\theta'').\qquad\cr}$$
Note that the map $\theta''\mapsto \sigma =\Phi(0,\theta'')$ is a parametrization of $X$ with associated parametrization of $T^*X$,
$$(\theta'',u'')\mapsto (\sigma,v)=\left( \Phi(0,\theta''), ^t d\Phi(0,\theta'')^{-1}(0,u'')\right).$$
Since the Jacobian of this mapping is $1$, after a change of variable, we obtain an operator valued measurable family $M$ on $T^*X$ and a measure $\nu$ on $T^*X$
such that 
$$\displaylines{  \lim_{\eps\rightarrow 0}\left(\op_\eps(a_\eps)f\;,\;f\right) \hfill\cr\hfill = \int_{T^*X}{\rm Tr}_{L^2(\R^p)} \left( a^W\left(\,^t d\Phi(0,\theta''(\sigma))^{-1}(z,0) +v , \sigma,B(\sigma) D_ z\right) M(\sigma,v)\right)d\nu(d\sigma,dv).\cr}$$
We now take advantage of the fact that 
$\varphi(\Phi(\zeta))=\zeta'$ for all $\zeta\in\R^d$
in order to write 
$$d\varphi(\Phi(\zeta))d \Phi(\zeta)=({\rm Id}, 0).$$
We deduce 
$$\forall z\in \R^p,\;\;  \,^td\Phi(\zeta) \,^td\varphi(\Phi(\zeta)) z=(z,0),$$
which implies 
$$\forall z\in \R^p,\;\;  \,^t d\varphi(\Phi(\zeta)) z= \,^td \Phi(\zeta)^{-1} (z,0).$$
Therefore,
\begin{eqnarray*}
 \lim_{\eps\rightarrow 0} \left(\op_\eps(a_\eps)f\;,\;f\right)& =& \int_{T^*X} {\rm Tr}_{L^2(\R^p)} \left( a^W\left(\,^t d\varphi(\sigma)z +v , \sigma,B(\sigma) D_z\right) M(\sigma,v)\right)d\nu(d\sigma,dv)\\
  & = & \int_{T^*X} {\rm Tr}_{L^2(N_\sigma X)} \left( Q_a(\sigma)M(\sigma,v)\right)d\nu(d\sigma,dv).
  \end{eqnarray*}
\end{proof}


\subsection{Measure structure of the part at infinity}

To analyze the part at infinity, we use 
 a cut-off function $\chi\in{\mathcal C} _c^\infty (\R^d)$ such that $0\leq \chi\leq 1$, $\chi(\eta)=1$ for $|\eta|\leq 1$ and  $\chi(\eta)=0$ for $|\eta|\geq 2$, and we write 
$$\left\langle W^{X,\eps}_{f},a\right\rangle=\left\langle W^{X,\eps}_{f},a_{R}\right\rangle+\left\langle W^{X,\eps}_{f},a^{ R}\right\rangle,$$
with
\begin{equation}\label{def:aR}
a_{R}(x,\xi,\eta):=a(x,\xi,\eta) \chi\left(\frac{\eta}{R}\right)\;{\rm and}\;\;a^{R}(x,\xi,\eta):=a(x,\xi,\eta) \left(1-\chi\left(\frac{\eta}{R}\right)\right).
\end{equation}
Observe that $a_R$ is compactly supported in all variables. We thus focus on the second part, and more precisely on the quantity 
$$\limsup_{R\to\infty}\, \limsup_{\eps\to 0^+}\left\langle W^{X,\eps}_{f},a^R\right\rangle .$$

\medskip 

 We denote by $S\Lambda$ the compactified normal bundle to $\Lambda$, viewed as a submanifold of $\R^d\times \R^d$, the fiber of which  is $T^*_\sigma\R^d\times S_\sigma\Lambda$ above $\sigma$ with $S_\sigma\Lambda$ being obtained by taking the quotient of $N_\sigma\Lambda$ by the action of  $\R^*_+$ by homotheties.

\begin{proposition}
Let $(f^\eps)$ be a bounded family of $L^2(\R^d)$.
There exists a subsequence~$\eps_k$  and a measure $\gamma$ on $S\Lambda$ such that for all $a\in{\mathcal A}$,
$$\displaylines{\lim_{R\to\infty} \lim_{k\to +\infty}\left\langle W^{X,\eps_k}_{f^{\eps_k}},a^{R}\right\rangle
=\int_{\R^d\times X\times {\bf S}^{d-1}} a_\infty (x,\sigma,\omega) \gamma(dx,d\sigma,d\omega) 
\hfill\cr\hfill
+ 
\int_{\R^d\times X^c\times {\bf S}^{d-1}} a_\infty \left(x,\xi,{\xi-\sigma(\xi)\over|\xi-\sigma(\xi)|}\right)\mu(dx,d\xi),\cr}$$
where $X^c$ denotes the complement of the set $X$ in $\R^d$. 
\end{proposition}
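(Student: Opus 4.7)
The plan is to decompose the cutoff symbol $a^R$ according to whether $\xi$ lies in a tubular neighborhood of $X$ or away from it, and to treat the two pieces independently. The contribution away from $X$ should reduce to the classical Wigner measure $\mu$ via straightforward pseudodifferential calculus, while the contribution near $X$ will produce, by a compactness argument, the new measure $\gamma$ living on the compactified normal bundle $S\Lambda$.

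First I would fix a family $\kappa_\delta \in \mathcal{C}_c^\infty(V)$ of cutoffs equal to $1$ on $\{|\xi-\sigma(\xi)|\le\delta/2\}$ and supported in $\{|\xi-\sigma(\xi)|\le\delta\}$, and split
\[
\langle W^{X,\eps}_{f^\eps},a^R\rangle = \langle W^{X,\eps}_{f^\eps},\kappa_\delta a^R\rangle + \langle W^{X,\eps}_{f^\eps},(1-\kappa_\delta)a^R\rangle.
\]
On $\supp(1-\kappa_\delta)$ one has $|\xi-\sigma(\xi)|\ge\delta/2$, so the scaling $\eta=(\xi-\sigma(\xi))/\eps$ forces $|\eta|\ge 2\max(R,R_0)$ as soon as $\eps$ is small enough. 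In that regime $a^R(x,\xi,\eta)=a_\infty(x,\xi,\eta/|\eta|)$, so $a^R_\eps(x,\xi)$ reduces to the $\eps$-independent symbol $(1-\kappa_\delta(\xi))\,a_\infty(x,\xi,(\xi-\sigma(\xi))/|\xi-\sigma(\xi)|)$. Passing to the semiclassical measure $\mu$ and then letting $\delta\to 0^+$ by dominated convergence (using that $\mu$ is finite) would yield the second term of the claimed identity, the integration being effectively supported on $X^c$.

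For the first summand I would pass to the limits $k\to\infty$ and then $R\to\infty$ by a compactness argument. Calder\'on--Vaillancourt gives a bound $|\langle W^{X,\eps}_{f^\eps},\kappa_\delta a^R\rangle|\le C\,\|f^\eps\|_{L^2}^2$ uniform in $\delta,R,\eps$, and for $|\eta|\ge R\to\infty$ only the values at infinity $a_\infty$ survive, so after also letting $\delta\to 0^+$ the double limit depends only on the restriction of $a_\infty$ to $\R^d\times X\times\mathbf{S}^{d-1}$. Stability under $R\to\infty$ follows from the uniform tail estimate
\[
\bigl|\langle W^{X,\eps}_{f^\eps},\kappa_\delta(a^{R'}-a^R)\rangle\bigr|\le C\sup_{|\eta|\ge R}\bigl|a(\cdot,\cdot,\eta)-a_\infty(\cdot,\cdot,\eta/|\eta|)\bigr|,
\]
which vanishes as $R\to\infty$ independently of $\eps$. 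A diagonal extraction along a countable dense family of test functions, together with G{\aa}rding's inequality applied to nonnegative $a_\infty$ (giving positivity up to an $O(\eps)$ remainder), produces a positive bounded linear functional on $\mathcal{C}_c(\R^d\times X\times\mathbf{S}^{d-1})$; the Riesz--Markov theorem then represents it as the sought measure $\gamma$.

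The main obstacle I expect is the intrinsic nature of $\gamma$: its independence of $\kappa_\delta$ and of the local defining system $\varphi$ of $X$. Support of $\gamma$ on $X$ itself, rather than in a neighborhood, will follow from the observation that any $\chi\in\mathcal{C}_c^\infty(V)$ vanishing on $X$ can be written $\chi(\xi)=(\xi-\sigma(\xi))\cdot r(\xi)$ for a smooth $r$; the substitution $\eta=(\xi-\sigma(\xi))/\eps$ then produces an extra factor $\eps\eta$ in the symbol which, together with the homogeneity at infinity of $a_\infty$, forces the corresponding contribution to vanish in the limit. Independence of $\varphi$ can be obtained by the same unitary-conjugation argument used in the proof of Proposition~\ref{prop:compact}: a change of defining equations induces a rotation on the fibers $N_\sigma X$ which descends to an action on $\mathbf{S}^{d-1}$ leaving the measure invariant, thereby realizing $\gamma$ as a well-defined object on the compactified normal bundle $S\Lambda$.
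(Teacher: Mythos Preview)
Your strategy matches the paper's closely: both split $a^R$ into a piece localized near $X$ (using a tubular cutoff in $\xi-\sigma(\xi)$) and a piece away from $X$; both identify the far piece with the ordinary semiclassical measure $\mu$ by observing that the symbol becomes $\eps$-independent there; and both obtain the near piece by a diagonal extraction combined with a weak G{\aa}rding inequality to produce a positive linear functional on $\mathcal{C}_c(\R^d\times X\times\mathbf{S}^{d-1})$.

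The genuine difference is in how you argue that $\gamma$ actually lives on the normal sphere bundle $S\Lambda$ rather than on all of $\R^d\times X\times\mathbf{S}^{d-1}$. The paper does not argue this directly: it invokes Lemma~\ref{lem:geometry} to straighten $X$ to a linear subspace $\{\xi'=0\}$, imports the corresponding result for linear subspaces from~\cite{CFMProc} (which gives a measure on $\R^d\times\R^{d-p}\times\mathbf{S}^{p-1}$), and then pushes it back via the parametrization $\omega\mapsto B(\sigma)\omega/|B(\sigma)\omega|$, which lands in $N_\sigma X$ by~\eqref{dsigma}. Your route is more intrinsic and avoids the coordinate change, but your sketch is missing the key observation that makes it work: since $\eta=(\xi-\sigma(\xi))/\eps$ and $\xi-\sigma(\xi)\in N_{\sigma(\xi)}X$ by the very definition of the tubular projection $\sigma$, the third argument of $a$ is \emph{always} normal. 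Hence the functional only sees $a_\infty(x,\sigma,\omega)$ for $\omega\in S_\sigma X$, and the support in the $\omega$-variable is automatic. Your remarks about writing $\chi(\xi)=(\xi-\sigma(\xi))\cdot r(\xi)$ handle localisation in $\xi$ onto $X$, and the unitary-conjugation argument handles coordinate independence, but neither of these by itself forces $\omega$ to be normal; you should add the point above explicitly.
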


\begin{proof} We begin by recalling the arguments that prove the existence of the measure~$\gamma$, which are the same that the one developed in the vector case in~\cite{CFMProc}.
Since $a=a_\infty$ for~$|\eta|$ large enough, we have $a^R=a_\infty^R$ as soon as $R$ is large enough and the quantity 
$$ \limsup_{R\to\infty}\, \limsup_{\eps\to 0^+}\left\langle W^{X,\eps}_{f^\eps},a^{R}\right\rangle$$
will only depend on $a_\infty$.  
 Therefore, by considering a dense subset of ${\mathcal C}_c(T^*\R^{d}\times {\bf S}^{d-1})$, we can find a subsequence $(\eps_k)$ by a diagonal extraction process such that the following linear form on ${\mathcal C}_c(T^*\R^{d}\times {\bf S}^{d-1})$ is well-defined
 $$\ell : a_\infty\mapsto \lim_{R\to\infty} \lim_{k\to +\infty}\left\langle W^{X,\eps_k}_{f^{\eps_k}},a^{R}\right\rangle.$$
 We then observe that 
 $$\forall \alpha,\beta\in\N^d,\;\;\exists C_{\alpha,\beta}>0,\;\;\sup_{\R^{2d}}\left| \partial_x^\alpha\partial_\xi^\beta \left(a^{R}\right)_\eps\right|\leq C_{\alpha,\beta}(\eps^{|\beta|}+R^{-|\beta|}).$$
 This implies that the symbolic calculus on symbols $(a^R)_\eps$ is semiclassical with respect to the small parameter $\sqrt{\eps^2+R^{-2}}$. To be precise, one has the following  weak G\aa rding inequality: if $a\geq 0$, then, for all $\kappa>0$,  there exists a constant~$C_\kappa$ such that 
 $$\left\langle W^{X,\eps}_{f^\eps},a^{R}\right\rangle \geq -\left( \kappa + C_\kappa \left(\eps+{1\over R}\right) \right)\| f^\eps\|^2_{L^2(\R^d)}.$$
 We then conclude that the linear form $\ell$ defined above is positive and defines a positive Radon measure~$\widetilde \rho$. It remains to compute~$\widetilde\rho$ outside $X$. In this purpose, we set 
 $$a^R=a^R_\delta+a^{R,\delta}\;\;{\rm with}\;\; a^{R}_{\delta}(x,\xi,\eta)=  a^R(x,\xi,\eta)(1-\chi)\left({\xi-\sigma(\xi)\over \delta}\right)$$ and we observe that, by the definition of $\mu$:
 $$\lim_{\delta\rightarrow 0} \,  \lim_{R\to\infty}\,\lim_{\eps\rightarrow 0} \left\langle W^{X,\eps_k}_{f^{\eps_k}},a^{R}_{\delta}\right\rangle
 = \int_{\R^d\times X^c\times {\bf S}^{d-1}} a_\infty \left(x,\xi,{\xi-\sigma(\xi)\over|\xi-\sigma(\xi)|}\right)\mu(dx,d\xi),$$
 which concludes the proof of the existence of the measure $\gamma$.
 
 \medskip 
 
 Let us now analyze the geometric properties of this measure. We choose a system of local coordinates of $\Lambda$ and introduce the matrix $B$ as in (\ref{def:B}). By Lemma~\ref{lem:geometry} and the result of~\cite{CFMProc} for vector spaces: up to a subsequence, there exists a measure $\widetilde\gamma_0$ on $\R^d\times \R^{d-p} \times {\bf S}^{p-1}$ such that 
 $$\displaylines{\lim_{\delta\to 0^+}\lim_{R\to\infty} \lim_{\eps\to 0^+}\left\langle W^{X,\eps}_{f},a^{R,\delta}\right\rangle\hfill\cr\hfill=
 \int_{\R^d\times \R^{d-p} \times {\bf S}^{p-1}} a_\infty\left(^t d\Phi(0,\xi'')^{-1} x,\Phi(0,\xi''),{B\left(\Phi(0,\xi'')\right) \omega \over|B\left(\Phi(0,\xi'')\right) \omega |}\right)\widetilde\gamma_0(dx,d\xi,d\omega).\cr}$$
 The mapping $\xi''\mapsto \Phi(0,\xi'')$ is a parametrization of $X$ and the mapping 
 $$(x,\xi)\mapsto \left(^t d\Phi(0,\xi'')^{-1} x,\Phi(0,\xi'')\right)$$
  is the associated mapping of $T^*_X\R^d$.
 Therefore, this relation   defines a measure $\widetilde\gamma$ on $T^*X\times {\bf S}^{p-1}$ such that  
 \begin{equation}\label{eq:tildegamma}
 \lim_{\delta\to 0^+}\lim_{R\to\infty} \lim_{\eps\to 0^+}\left\langle W^{X,\eps}_{f},a^{R,\delta}\right\rangle
 =
 \int_{T^*X\times  {\bf S}^{p-1}} a_\infty\left(x,\sigma,{B\left(\sigma\right) \omega \over|B\left(\sigma\right) \omega |}\right)\widetilde\gamma(dx,d\xi,d\omega). 
 \end{equation}
 Besides, using that 
 \begin{equation}\label{dsigma}
 {\rm Id} = d\sigma(\sigma_0) + B(\sigma_0) d\varphi(\sigma_0)
 \end{equation} for any $\sigma_0\in X$, we deduce that for any~$\zeta\in T_{\sigma_0}\R^d$, we have the decomposition 
$$\zeta= d\sigma(\sigma_0) \zeta + B(\sigma_0) d\varphi(\sigma_0)\zeta,\;\;{\rm with}\;\;d\sigma(\sigma_0) \zeta \in T_\sigma X\;\;{\rm and }\;\;B(\sigma_0) d\varphi(\sigma_0)\zeta\in N_{\sigma_0} X.$$
Now, since $d\varphi$ is of rank $p$, one can write any $\omega\in {\bf S}^{p-1}$ as $\omega=d\varphi(\sigma_0)\zeta$ and  the points $B(\sigma_0)\omega$ are in $N_{\sigma_0} X$.
By identification of $\gamma$ in~(\ref{eq:tildegamma}), 
we deduce that $\gamma(x,\sigma,\cdot)$ is a measure on the set 
$$\left\{ {B\left(\sigma\right) \omega \over|B\left(\sigma\right) \omega |},\;\;\omega\in {\bf S}^{p-1}\right\}= \faktor{N_\sigma X}{\R^*_+}=S_\sigma X,$$
which completes the proof of the proposition. 
\end{proof}


\section{Two microlocal Wigner measures and families of solutions to dispersive equations}\label{sec:4}

We now consider families of solutions to equation~(\ref{eq:disp2}). 
As proved in Proposition~\ref{prop:loc}, the Wigner measure of the family $(u^\eps(t,\cdot))$ concentrates on the set
$\Lambda=\{\nabla\lambda(\xi)=0\}.$
In order to analyze~$\mu^t$ above $\Lambda$, we perform a second microlocalization above the set $X=\Lambda$, with average in time. We  consider 
for $\theta\in L^1(\R)$ the quantities
$$\int_\R \theta(t) \left\langle W^{\Lambda,\eps}_{u^\eps(t,\cdot)},a\right\rangle dt$$
for symbols $a\in{\mathcal A}$. 
Up to extracting a subsequence $\eps_k$, we construct $L^\infty$ maps 
$$t\mapsto \gamma_t(dx,d\sigma,d\omega),\;\; t\mapsto \nu_t(d\sigma,dv),\;\; t\mapsto M_t(\sigma,v)$$
valued respectively on the set of positive Radon measures on $\R^d\times\Lambda\times {\bf S}^{d-1}$, on the set of  positive Radon measures on $T^*\Lambda$ and finally on the set of measurable families from~$T^*\Lambda$ onto the set of positive trace class operators on $L^2(N\Lambda)$, such that for all $\theta\in L^1(\R)$ and for all $a\in{\mathcal A}$:
$$\displaylines{\int_\R \theta(t) \left\langle W^{\Lambda,\eps_k}_{u^{\eps_k}(t,\cdot)},a\right\rangle dt\Tend{k}{+\infty}
\int_\R  \int _{\R^d\times \Lambda\times {\bf S}^{d-1}} \theta(t)a_\infty (x,\sigma,\omega) \gamma_t(dx,d\sigma,d\omega) dt\hfill\cr\hfill+ 
\int_\R \int_{T^*\Lambda}\theta(t) {\rm Tr}_{L^2(N_\sigma\Lambda)} (Q_a(\sigma,v)M_t(\sigma,v)\nu_t(d\sigma,dv)dt.\cr}
$$
The measures $\gamma^t$ and $\nu^t$, and the map $M^t$ satisfy additional properties coming from the fact that the family $(u^\eps(t,\cdot))$ solves a time-dependent equation. 
These properties are discussed in the next two sections. We shall see that  the measures $\gamma_t$ are invariant under a linear flow and that we can choose the sequence $\eps_k$ such that the map $t\mapsto M_t$ is continuous (and even ${\mathcal C}^1$).

\subsection{Transport properties of the compact part}

Since $\Lambda$ is the set of critical points of~$\lambda$, the matrix $d^2 \lambda$ is intrinsically defined above points of $\Lambda$. Thus, using the formalism of the preceding sections, 
$$Q_{d^2\lambda(\sigma)\eta\cdot\eta}= d^2\lambda(\sigma)D_z\cdot D_z.$$

\begin{proposition}
The map $t\mapsto \nu_t$ is constant and the map $$t\mapsto M_t(\sigma,v)\in \mathcal{C}(\R;\mathcal{L}_+^1(L^2(N_\sigma\Lambda))$$ solves the Heisenberg equation~(\ref{eq:heis1}).
\end{proposition}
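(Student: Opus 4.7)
The plan is to test the equation \eqref{eq:disp2} against the two-microlocal observables $\op_\eps(a_\eps)$ and let $\eps_k\to 0$, as was done for the flow-invariance of the semiclassical measure in Lemma~\ref{lemma:inv1}. I will restrict attention to the compact part (that is, $a\in\mathcal A$ with compact support in all variables), since the proposition concerns $\nu_t$ and $M_t$. For such $a$ and $\theta\in\mathcal C^\infty_c(\R)$, differentiating $(\op_\eps(a_\eps)u^\eps(t),u^\eps(t))$ with the help of \eqref{eq:disp2} gives
$$\frac{d}{dt}\bigl(\op_\eps(a_\eps)u^\eps,u^\eps\bigr) = \frac{1}{i\eps^2}\bigl([\op_\eps(a_\eps),\lambda(\eps D_x)]u^\eps,u^\eps\bigr) + \frac{1}{i}\bigl([\op_\eps(a_\eps),V_{\rm ext}(t,\cdot)]u^\eps,u^\eps\bigr) + O(\eps),$$
where the remainder is absorbed using $\|\eps g^\eps(t,\cdot)\|_{L^2}=O(\eps)$ and the Calder\'on--Vaillancourt estimate. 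Integrating against $\theta$ and moving the $t$-derivative onto $\theta$ yields the basic weak identity to be analysed.

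The core of the argument is the identification, in the two-microlocal sense, of the limits of the two commutators. For the Fourier multiplier, I would Taylor-expand around $\Lambda$, using $\nabla\lambda\equiv 0$ on $\Lambda$,
$$\lambda(\xi) \;=\; \lambda(\sigma(\xi)) \;+\; \tfrac{1}{2}d^2\lambda(\sigma(\xi))(\xi-\sigma(\xi))\cdot(\xi-\sigma(\xi)) \;+\; O(|\xi-\sigma(\xi)|^3).$$
Combined with the second-microlocal change of variable $\eta=(\xi-\sigma(\xi))/\eps$ and the Weyl correspondence $\eta\leftrightarrow D_z$ in the normal fibre used to define $Q_a(\sigma,v)$, this should give on the compact part
$$\frac{1}{i\eps^2}[\op_\eps(a_\eps),\lambda(\eps D_x)] \;\longrightarrow\; \frac{1}{i}\Bigl[\tfrac{1}{2}d^2\lambda(\sigma)D_z\cdot D_z,\,Q_a(\sigma,v)\Bigr];$$
the $\lambda(\sigma(\xi))/\eps^2$ piece contributes nothing here because, being a function of $\sigma(\xi)$ alone, it lives entirely on the tangential directions to $\Lambda$ and its commutator with $a_\eps$ is $O(\eps)$ on the compact part. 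For the potential, a direct application of semiclassical calculus to $V_{\rm ext}(t,\cdot)$ (which carries no $\eps$) yields
$$\frac{1}{i}[\op_\eps(a_\eps),V_{\rm ext}(t,\cdot)] \;\longrightarrow\; \frac{1}{i}\bigl[m_{V_{\rm ext}(t,\cdot)}(v,\sigma),\,Q_a(\sigma,v)\bigr].$$
Passing to the limit along $(\eps_k)$ via Proposition \ref{prop:compact} then produces the distributional identity
$$\int_\R\theta'(t)\!\!\int_{T^*\Lambda}\!\!\Tr_{L^2(N_\sigma\Lambda)}[Q_a M_t]\,\nu_t\,dt \;=\; i\int_\R\theta(t)\!\!\int_{T^*\Lambda}\!\!\Tr_{L^2(N_\sigma\Lambda)}\bigl[Q_a[H_t,M_t]\bigr]\,\nu_t\,dt,$$
with $H_t(\sigma,v):=\tfrac{1}{2}d^2\lambda(\sigma)D_z\cdot D_z + m_{V_{\rm ext}(t,\cdot)}(v,\sigma)$, for every admissible $a$.

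To extract both conclusions, I would read this as an operator-valued distributional equation $\partial_t(M_t\,\nu_t) + i[H_t,M_t]\,\nu_t = 0$ on $T^*\Lambda\times\R$. Since the family $\{Q_a(\sigma,v)\}$ is dense in the compact operators on each fibre $L^2(N_\sigma\Lambda)$ (cf.\ Appendix \ref{sec:ovm}), this equation is equivalent to $\partial_t(M_t\nu_t) = -i[H_t,M_t]\nu_t$ in the weak sense of $\mathcal L^1$-valued measures. Taking the fibrewise trace and using the cyclicity of $\Tr$ (applied after finite-rank truncation of $M_t\in\mathcal L^1_+$ to make sense of $\Tr[H_t,M_t]$, as is standard) gives $\partial_t(\Tr[M_t]\,\nu_t)=0$; combined with the normalization $\Tr_{L^2(N_\sigma\Lambda)}M_t(\sigma,v)=1$ (which is preserved since the Heisenberg flow is implemented by fibrewise unitary conjugation by the propagator of $H_t$) this forces $\partial_t\nu_t = 0$. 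Substituting $\nu_t\equiv\nu$ back into the equation yields $i\partial_t M_t = [H_t,M_t]$ for $\nu$-a.e.\ $(\sigma,v)$, and the $\mathcal C(\R;\mathcal L^1_+)$ regularity of $M_t$ follows from the well-posedness of that Heisenberg flow.

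The main obstacle is the two-microlocal bookkeeping in the commutator $\frac{1}{i\eps^2}[\op_\eps(a_\eps),\lambda(\eps D_x)]$: the potentially divergent $\eps^{-2}$ is compensated only by combining the extra $\eps^{-1}$ produced when the semiclassical Poisson bracket hits the fast variable $(\xi-\sigma(\xi))/\eps$ inside $a_\eps$, with the extra factor $|\xi-\sigma(\xi)|$ coming from the vanishing of $\nabla\lambda$ on $\Lambda$. Matching the resulting symbol with the geometric object $\tfrac12 d^2\lambda(\sigma)D_z\cdot D_z$ acting on $L^2(N_\sigma\Lambda)$ requires invariance under the changes of local defining equations $\varphi\mapsto R\varphi$ used in Section \ref{sec:two_microlocal} to construct $Q_a$, and is the most delicate point; the computation parallels the one performed in \cite{AFM:15,MaciaTorus} in the flat-torus setting.
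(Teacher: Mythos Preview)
Your plan is correct and follows the same overall strategy as the paper: differentiate the two-microlocal pairing in time, identify the commutators, pass to the limit, and read off the Heisenberg equation together with $\partial_t\nu_t=0$ by taking traces. Two points are worth tightening. First, your disposal of the $\lambda(\sigma(\xi))/\eps^2$ term is right in conclusion but for the wrong reason: the issue is not that ``it lives on tangential directions'' but simply that $\nabla_\xi[\lambda\circ\sigma]=(d\sigma)^T(\nabla\lambda\circ\sigma)=0$, so $\lambda\circ\sigma$ is constant on the connected $\Lambda$ and the commutator vanishes identically. Second, the paper works with the Taylor expansion of $\nabla\lambda$ (rather than $\lambda$), which gives directly a two-microlocal symbol $b$ with $\frac{1}{\eps}\nabla\lambda\cdot\nabla_x a_\eps=b_\eps$; the identification $Q_b=i[Q_\theta,Q_a]$ with $\theta=\tfrac12 d^2\lambda(\sigma)\eta\cdot\eta$ is then carried out via the geometric identity $d^2\lambda(\sigma)=d^2\lambda(\sigma)B(\sigma)d\varphi(\sigma)$ (coming from $\ker d^2\lambda(\sigma)=T_\sigma\Lambda$), which is exactly the ``delicate point'' you flag. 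Finally, for the continuity of $t\mapsto M_t$ and the identification $M_t|_{t=0}=M_0$, the paper does not invoke well-posedness of the Heisenberg flow but instead observes that the uniform bound on $\frac{d}{dt}\langle W^{\Lambda,\eps}_{u^\eps(t)},a\rangle$ gives equicontinuity, and applies Ascoli to obtain pointwise-in-$t$ convergence along a further subsequence; this is what allows evaluating the limit at $t=0$ and matching it with the two-microlocal data of $u^\eps_0$, a step your well-posedness argument does not by itself supply.
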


\begin{proof}
We analyze for $a\in\mathcal{C}_c^\infty(\R^{3d})$ the time evolution of the quantity
$ \left\langle W^{\Lambda,\eps}_{u^{\eps}(t,\cdot)},a\right\rangle.$ We have 
$${d\over dt} \left\langle W^{\Lambda,\eps}_{u^{\eps}(t,\cdot)},a\right\rangle ={1\over i\eps^2} \left(\left[ \op_\eps(a_\eps),\lambda(\eps D)\right] u^\eps(t,\cdot),u^\eps(t,\cdot)\right) + {1\over i} 
 \left(\left[ \op_\eps(a_\eps),V_{\rm ext}\right] u^\eps(t,\cdot)\;,\;u^\eps(t,\cdot)\right)+O(\eps).$$
By standard symbolic calculus for Weyl quantization, we have  in ${\mathcal L}(L^2(\R^d))$
$${1\over i\eps^2}\left[ \op_\eps(a_\eps),\lambda(\eps D)\right] = {1\over \eps}  \,\op_\eps(\nabla\lambda(\xi) \cdot\nabla_x a_\eps) +O(\eps).$$
Besides, by Taylor formula and by use of $\nabla\lambda (\sigma(\xi))=0$, we have 
\begin{equation}\label{taylor_lambda}
\nabla \lambda(\xi) =  d^2\lambda(\sigma(\xi)) \left(\xi-\sigma(\xi)\right)   + \Gamma(\xi) \left(\xi-\sigma(\xi) \right)\cdot\left(\xi-\sigma(\xi)\right),
\end{equation}
where $\Gamma$ is a smooth matrix. This yields 
$${1\over \eps} \nabla\lambda(\xi) \cdot\nabla_x a_\eps(x,\xi) = b_\eps(x,\xi)$$
with 
$$
b(x,\xi,\eta)=d^2\lambda(\sigma(\xi))\eta\cdot \nabla_xa (x,\xi,\eta) + \Gamma(\xi) \left(\xi-\sigma(\xi) \right)\cdot\eta \nabla_x a(x,\xi,\eta).$$
At this stage of the proof, we see that ${d\over dt} \left\langle W^{\Lambda,\eps}_{u^{\eps}(t,\cdot)},a\right\rangle$ is uniformly bounded in~$\eps$, thus using a suitable version of Ascoli's theorem and a standard diagonal extraction argument, we can find a sequence $(\eps_k)$ such that the limit exists for all $a\in{\mathcal C}_c^\infty(\R^{3d})$ and all time $t\in[0,T]$ (for some $T>0$ fixed) with a limit that is a continuous map in time. The transport equation that we are now going to prove shall guarantee the independence of the limit from~$T>0$.

\medskip

We observe that for any local system  of equations of $\Lambda$, $\varphi(\xi)=0$, the operator $Q_b^\varphi$ satisfies for $(\sigma,v)\in T\Lambda$,
\begin{eqnarray*}
Q_b^\varphi (\sigma,v) & = &  b^W\left(v+\,^td\varphi(\sigma) z, \sigma, B(\sigma)D_z\right) \\
& = &{\rm op}_1\left( d^2\lambda(\sigma)B(\sigma) \eta \cdot \nabla_xa (v+\,^td\varphi(\sigma) z,\sigma,B(\sigma)\eta) \right).
\end{eqnarray*}
On the other hand, we observe   that, setting 
$$\theta(\xi,\eta) = {1\over 2} d^2\lambda(\xi) \eta\cdot\eta,$$
we have 
\begin{eqnarray}\nonumber 
 i \left[ Q^\varphi_\theta(\sigma),Q_{a}^\varphi (\sigma,v)\right] &
= & i\left[ \,^t B(\sigma)d^2\lambda(\sigma)B(\sigma) D_z\cdot D_z \;,\; Q_a^\varphi(\sigma,v)\right]\\
\label{eq:com_hessienne}
&=& {\rm op}_1 \left(\,^t d\varphi(\sigma) \, ^tB(\sigma)d^2\lambda(\sigma)B(\sigma)\eta\cdot 
\nabla_x a (v+\,^td\varphi(\sigma) z,\sigma,B(\sigma)\eta) \right),
\end{eqnarray}
and we now focus on the matrix $\,^t d\varphi(\sigma) \, ^tB(\sigma)d^2\lambda(\sigma)B(\sigma)$, and thus on the properties of the hessian $d^2\lambda(\sigma)$.

\medskip

For $\xi \in \Lambda$, the bilinear form $d^2\lambda(\xi)$ is defined intrinsically on~$T_\xi\R^d$ and  
$d^2\lambda(\xi) =0$ on $T_\xi \Lambda$. 
We deduce from~(\ref{dsigma}) that any $\zeta\in T_\xi\R^d$ satisfies 
$$\zeta= d\sigma(\xi) \zeta + B(\xi) d\varphi(\xi)\zeta\;\;{\rm with}\;\;d\sigma(\xi) \zeta \in T_\sigma\Lambda.$$
Therefore, 
$$\forall \xi\in\Lambda,\;\;d^2\lambda(\xi) = d^2\lambda(\xi) B(\xi) d\varphi(\xi).$$
Taking into account this information, equation~(\ref{eq:com_hessienne}) becomes 
$$\displaylines{
 i \left[ Q^\varphi_\theta(\sigma),Q_{{a}}^\varphi (\sigma,v)\right] 
= {\rm op}_1 \left(d^2\lambda(\sigma)B(\sigma)\eta\cdot 
\nabla_x a (v+\,^td\varphi(\sigma) z,\sigma,B(\sigma)\eta) \right).\cr}$$We conclude 
$$Q_b^\varphi (\sigma,v) = i \left[ Q^\varphi_\theta(\sigma),Q_{{a}}^\varphi (\sigma,v)\right].$$
This implies that 
$$i\partial_t (M_t(\sigma,v)\nu_t(d\sigma,dv)) =\left[\dfrac{1}{2} d^2\lambda(\sigma)D_z\cdot D_z + m_{V_{\rm ext}(t,\cdot)}(v,\sigma), M_t(\sigma,v)\right] \nu_t(d\sigma,dv).$$
Taking the trace, we get $\partial_t \nu_t=0$, thus $\nu_t$ is equal to some constant measure $\nu$ and $M_t$ satisfies equation~(\ref{eq:heis1}),
which proves the proposition. \end{proof}


\subsection{Invariance and localization of the measure at infinity}

We are concerned with the property of the $L^\infty$-map $t\mapsto \gamma^t(dx,d\sigma,d\omega)$ valued in the set of positive Radon measures on $S\Lambda$. 
We now define a flow on $S \Lambda$ by setting for $s\in\R$  
$$\phi^s_2 : (x,\sigma,\omega)\mapsto (x+s \,d^2\lambda(\sigma)  \omega,\sigma,\omega).$$

\begin{proposition}\label{prop:inv2micro}
The measure $\gamma^t$  is invariant by the flow $\phi^s_2$.
\end{proposition}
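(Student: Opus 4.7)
The plan is to mimic the argument of Lemma~\ref{lemma:inv1} at the second-microlocal scale. The infinitesimal generator of $\phi^s_2$ on $S\Lambda$ is the vector field $(x,\sigma,\omega)\mapsto d^2\lambda(\sigma)\omega\cdot\nabla_x$, so it suffices to establish, for every $a\in\mathcal{A}$ and a.e.\ $t\in\R$, the transport identity
\begin{equation*}
\int_{S\Lambda} d^2\lambda(\sigma)\omega\cdot\nabla_x a_\infty(x,\sigma,\omega)\,\gamma^t(dx,d\sigma,d\omega)=0.
\end{equation*}
Integrating the (fibrewise linear in $x$) flow $\phi^s_2$ then gives invariance of $\gamma^t$ under $\phi^s_2$ for every $s\in\R$.

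To bring this bracket out when testing against $\gamma^t$, I would introduce, for large $R$, the \emph{primitive} symbol
\begin{equation*}
c^R(x,\xi,\eta):=\frac{a^R(x,\xi,\eta)}{|\eta|},
\end{equation*}
cut off smoothly near $\eta=0$ if needed, so that $c^R\in\mathcal{A}$ with $c^R_\infty=0$ and $\|c^R_\eps\|_\infty=O(1/R)$. Using the Taylor expansion~\eqref{taylor_lambda} of $\nabla\lambda$ at $\sigma(\xi)$, a direct computation gives
\begin{equation*}
\frac{1}{i\eps^2}\bigl[\op_\eps(c^R_\eps),\lambda(\eps D)\bigr]=\op_\eps(\tilde b^R_\eps)+\op_\eps(E^R_\eps)+O_{\mathcal{L}(L^2)}(\eps),
\end{equation*}
with $\tilde b^R(x,\xi,\eta):=d^2\lambda(\sigma(\xi))(\eta/|\eta|)\cdot\nabla_x a^R(x,\xi,\eta)\in\mathcal{A}$, whose infinity part is exactly $d^2\lambda(\sigma)\omega\cdot\nabla_x a_\infty$, and $E^R$ a bounded symbol carrying an explicit factor $\xi-\sigma(\xi)$, so that its two-microlocal limits vanish on $\Lambda$.

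Substituting into the equation satisfied by $u^\eps$, integrating against $\theta\in\mathcal{C}^1_c(\R)$, and integrating by parts in time, the quantity $\int_\R\theta(t)\bigl(\op_\eps(\tilde b^R_\eps)u^\eps,u^\eps\bigr)\,dt$ equals, up to $O(\eps)$, the sum of three pieces: the boundary term $-\int_\R\theta'(t)\bigl(\op_\eps(c^R_\eps)u^\eps,u^\eps\bigr)\,dt$, which is $O(1/R)$ uniformly in $\eps$ since $\|c^R_\eps\|_\infty=O(1/R)$; the $V_{\rm ext}$-commutator contribution, which a direct inspection of $\partial_\xi c^R_\eps$ on its support $\{|\xi-\sigma|\geq R\eps\}$ shows to be $O(1/R^2)$; and the $E^R$-piece, which converges along the extracted subsequence to the pairing of a symbol vanishing on $\Lambda$ with $\nu$ and $\gamma^t$, both of which are carried by $\Lambda$, hence converges to $0$. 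Taking first $\eps\to 0$ along the subsequence and then $R\to\infty$ kills the compact part of the left-hand side (because $\tilde b^R$ is supported in $\{|\eta|\geq R\}$ and $M_t$ is trace-class, so $\Tr(Q_{\tilde b^R}M_t)\to 0$) and yields the identity
\begin{equation*}
\int_\R\theta(t)\int_{S\Lambda}d^2\lambda(\sigma)\omega\cdot\nabla_x a_\infty\,d\gamma^t\,dt=0,
\end{equation*}
for every $\theta\in\mathcal{C}^1_c(\R)$. Density then gives the infinitesimal identity for a.e.\ $t$, whence the invariance.

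The main technical obstacle is the justification of the symbolic expansion for $\op_\eps(c^R_\eps)$: the $\xi$-derivatives of $c^R_\eps$ grow like $(R\eps)^{-1}$, so this symbol is not standard semiclassical. One must rely on the two-microlocal pseudodifferential calculus (cf.\ Appendix~\ref{sec:pdo}), where the support constraint $|\xi-\sigma|\geq R\eps$ compensates each factor $1/\eps$ lost in a $\xi$-derivative by a factor $1/R$, so all remainders are controlled by $\eps R^{-k}$ or $R^{-k}$ and vanish in the iterated limit.
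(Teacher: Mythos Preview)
Your approach is correct and follows essentially the same strategy as the paper's: both introduce a ``primitive'' symbol carrying a factor $1/|\eta|$ (equivalently $\eps/|\xi-\sigma(\xi)|$), compute its commutator with $\lambda(\eps D)$ to produce the drift $d^2\lambda(\sigma)\omega\cdot\nabla_x$, and control the resulting time-derivative via the equation, exploiting that the two-microlocal calculus is semiclassical with effective parameter $\sqrt{\eps^2+R^{-2}}$.

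There are two cosmetic differences worth noting. First, the paper works with the flowed symbol $b^{R,\delta}_s=a^{R,\delta}\circ\phi^s_2$ at finite $s$ and shows $\langle W,b^{R,\delta}_s\rangle=\langle W,b^{R,\delta}_0\rangle+o(1)$, whereas you work infinitesimally at $s=0$; since $\phi^s_2$ is linear in $x$, the two are equivalent. Second, the paper introduces an extra $\delta$-cutoff $\chi((\xi-\sigma(\xi))/\delta)$ localizing near $\Lambda$, so the Taylor remainder in~\eqref{taylor_lambda} is bounded directly by $O(\delta)$; you instead leave this remainder as the symbol $E^R$ and dispose of it in the iterated limit using that $E_\infty$ vanishes on $S\Lambda$ (and that $\mu_t\rceil_{\Lambda^c}=0$ by Proposition~\ref{prop:loc}). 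Your route saves one parameter but relies on the localization of $\gamma^t$ and $\mu_t$, which is already available. Two small corrections: the $V_{\rm ext}$-commutator is $O(\eps/R)+O(1/R^2)$ rather than $O(1/R^2)$ (the $\partial_\xi a^R$ contribution gives $O(\eps/R)$), and in the sentence about the $E^R$-piece the relevant measures are $\gamma^t$ and $\mu_t$, not $\nu$; neither affects the conclusion.
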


\begin{proof}
The proof essentially follows the lines of the proof of Theorem~2.5  in~\cite{AFM:15}. 
We use the cut-off function $\chi$ introduced before and set
$$a^{R,\delta}(x,\xi,\eta)=  a(x,\xi,\eta)\,\chi\left({\xi-\sigma(\xi)\over \delta}\right)\left(1-\chi\left({\eta\over R}\right)\right);$$
we introduce the smooth symbol
$$b^{R,\delta}_s(x,\xi,\eta)= a^{R,\delta}\left(x+s d^2\lambda(\xi) {\eta\over |\eta|},\xi,\eta\right),$$
which satisfies $(b_s^{R})_\infty = a_\infty\circ\phi^s_2.$
Using equation~(\ref{taylor_lambda}), we obtain  
$$\left( b^{R,\delta} _s\right)_\eps (x,\xi) =a^{R,\delta}\left(x+{s\over |\xi-\sigma(\xi)|} \nabla\lambda(\xi) ,\xi,{\xi-\sigma(\xi)\over\eps}\right) +\delta \, r_\eps^{R,\delta}(x,\xi)$$
where for all multi-index $\alpha,\beta \in \N^d$, there exists a constant $C_{\alpha,\beta}>0$ such that $r_\eps^{R,\delta}$ satisfies:
$$\sup_{x,\xi\in\R^d}\left|\partial_x^\alpha\partial_\xi^\beta   r^{R,\delta}_\eps\right| \leq C_{\alpha,\beta}.$$ 
As a consequence, 
$\langle W_{u^\eps(t,\cdot)}^{\Lambda,\eps}, r_\eps ^{R,\delta}\rangle $ is uniformly bounded in $R,\delta,\eps$ and: 
$$ \langle W_{u^\eps(t,\cdot)}^{\Lambda,\eps}, b^{R,\delta}_s \rangle = \langle W_{u^\eps(t,\cdot)}^{\Lambda,\eps},  \widetilde b^{R,\delta}_s\rangle +O(\delta),$$
uniformly with respect to $R$ and $\eps$, with 
$$\widetilde b^{R,\delta}_s(x,\xi,\eta)=a^{R,\delta}\left(x+{s\over |\xi-\sigma(\xi)|} \nabla\lambda(\xi) ,\xi,\eta\right) . $$
Note that this symbol is smooth because $|\xi-\sigma(\xi)|>R\,\eps $ on the support of $a^{R,\delta}$. 
We are going to prove that for all $\theta\in{\mathcal C}_c^\infty(\R)$, 
$$\lim_{\delta\to 0^+}\lim_{R\to\infty} \lim_{\eps\to 0^+}\int_\R\theta(t) {d\over ds} \langle W_{u^\eps(t,\cdot)}^{\Lambda,\eps}, \widetilde b^{R,\delta}_s\rangle dt =0.$$
Indeed, by the calculus of the preceding section, we have 
$${d\over ds} \langle W_{u^\eps(t,\cdot)}^{\Lambda,\eps}, \widetilde b^{R,\delta}_s\rangle= \langle W_{u^\eps(t,\cdot)}^{\Lambda,\eps}, \nabla\lambda\cdot \nabla_x c^{R,\delta}_s\rangle$$
with 
$$c^{R,\delta}_s (x,\xi,\eta) ={1\over |\xi-\sigma(\xi)| }a^{R,\delta}\left(x+{s\over |\xi-\sigma(\xi)|} \nabla\lambda(\xi) ,\xi,\eta\right).$$
The symbol $c^{R,\delta}_s$ is such that for all multi-index $\alpha \in \N^d$, there exists a constant $C_{\alpha}>0$ for which: 
$$\sup_{x,\xi\in\R^d}\left|\partial_x^\alpha   (c_s^{R,\delta})_\eps\right| \leq C_{\alpha} (R\eps)^{-1}.$$ 
This implies in particular: 
$$\left\|  \op_\eps((c^{R,\delta}_s)_\eps)\right\|_{{\mathcal L}(L^2(\R^d))}\leq {C\over R\eps}.$$
By symbolic calculus, we have
$${1\over i\eps} \left[ \op_\eps((c^{R,\delta}_s)_\eps),\lambda(\eps D)\right] = \op_\eps\left(\nabla\lambda(\xi) \cdot \nabla_x (c^{R,\delta}_s)_\eps\right)+O\left({\eps\over R}\right).$$
We deduce that for all $\theta\in{\mathcal C}_c^\infty(\R)$, 
\begin{eqnarray*}
\int_\R \theta(t) &{d\over ds}&  \langle W_{u^\eps(t,\cdot)}^{\Lambda,\eps}, \widetilde b^{R,\delta}_s\rangle dt \\
&=& \int _\R\theta(t) \left( {1\over i\eps} \left[ \op_\eps((c^{R,\delta}_s)_\eps),\lambda(\eps D)\right] u^\eps(t,\cdot)\;,\;u^\eps(t,\cdot) \right) dt +O\left({\eps\over R}\right)\\
&=& \int _\R\theta(t) \left( {1\over i\eps} \left[ \op_\eps((c^{R,\delta}_s)_\eps),\lambda(\eps D)+\eps^2 V_{\rm ext}(t,x)\right] u^\eps(t,\cdot)\;,\;u^\eps(t,\cdot) \right) dt +O\left({1\over R}\right)\\
&=&-\eps \int _\R\theta(t) {d\over dt} \left(\op_\eps((c^{R,\delta}_s)_\eps) u^\eps(t,\cdot)\;,\;u^\eps(t,\cdot) \right) dt +O\left({1\over R}\right)\\
&=& O(\eps)+O\left({1\over R}\right).
\end{eqnarray*}
As a conclusion,
\begin{eqnarray*}
 \langle W_{u^\eps(t,\cdot)}^{\Lambda,\eps}, b^{R,\delta}_s \rangle& =& \langle W_{u^\eps(t,\cdot)}^{\Lambda,\eps},  \widetilde b^{R,\delta}_s\rangle +O(\delta)\\
 & = &  \langle W_{u^\eps(t,\cdot)}^{\Lambda,\eps},  \widetilde b^{R,\delta}_0\rangle + O(|s|\eps)+O(|s| R^{-1}) + O(\delta)\\
 & = &  \langle W_{u^\eps(t,\cdot)}^{\Lambda,\eps},   b^{R,\delta}_0\rangle + O(|s|\eps)+O(|s| R^{-1}) + O(\delta),
 \end{eqnarray*}
 which implies  the Proposition.\end{proof}

 \subsection{Proofs of Theorems~\ref{theo:disc} and~\ref{theo:nondis}}
 Remind that Theorem~\ref{theo:nondis} implies Theorem~\ref{theo:disc}, thus 
 we focus on~Theorem~\ref{theo:nondis}. We first observe that the measure $\gamma_t$ is zero. Indeed, by {\bf H2};  
for $\sigma\in\Lambda$,   $d^2\lambda(\sigma)$ is one to one on $N_\sigma\Lambda$. Therefore, since~$\gamma_t$ is a measure on~$S\Lambda$, the invariance property of Proposition~\ref{prop:inv2micro} and an argument similar to the one of Lemma~\ref{lem:classicdisp} yields that~$\gamma_t=0$. As a consequence, the semi-classical measure~$\mu_t$ is only given by the compact part and one has for any $a\in{\mathcal C}_c^\infty(\R^{2d})$ and $\theta\in L^1(\R)$,
$$\int_\R \theta(t)\int_{\R^{2d} }a(x,\xi) \mu^t(dx,d\xi) =
\int_\R \theta(t) \int_{T^* \Lambda} {\rm Tr}_{L^2(N_\sigma\Lambda)}\left(Q_a(\sigma,v)M_t(d\sigma,dv)\right)dt. $$
Then, taking $\theta=\mathds{1}_{[a,b]}$ for $a,b\in\R$, $a<b$, and in view  of Proposition~\ref{prop:msc} and of Lemma~\ref{remark:tx}, we deduce that for every  every $\phi\in \mathcal{C}_c(\R^d)$ one has for the subsequence defining $M_t$ and $\nu_t$:
$$
\lim_{\eps\rightarrow 0}\int_a^b\int_{\R^d}\phi(x)|u^{\eps}(t,x)|^2dxdt=\int_a^b \int_{T^*\Lambda}\Tr_{L^2(N_\xi\Lambda)}\left[Q_\phi(v,\xi)M_t(v,\xi)\right]\nu(dv,d\xi)dt,$$
where $M_t$ satisfies~(\ref{eq:heis1}). This concludes the proof of Theorem~\ref{theo:nondis}. We emphasize that the measure $\nu$ and the operator valued family $M_0$ are utterly determined by the initial data.


\section{Bloch projectors and semiclassical measures}\label{sec:5}

In this section we prove Theorem~\ref{mainresult}, as a result of the analysis in Section~\ref{sec:two_microlocal}. We shall use 
properties of the operator of restriction~$L^\eps$ defined in~(\ref{def:Leps}) and of the projector~$\Pi_n(\eps D_x)$. Then, we prove  \textit{a priori} estimates for solutions of equation~\eqref{eq:U} and use them to reduce the dynamics of our original problem to those of equation \eqref{eq:U_components} (Corollary~\ref{prop:adiab1}).

\medskip

Note that, modulo adding a positive constant to equation \eqref{eq:schro}, we may assume that $P(\eps D_x)$ is a non-negative operator. With this in mind, the following estimates, that will be repeatedly used in what follows, hold.

\begin{remark}\label{rem:eqnorm}
There exists a constant $c>0$ such that:
\[
c^{-1}\|U\|_{H^s_\eps(\R^d\times\T^d)}\leq\
\|\left\langle\eps D_x\right\rangle^s U\|_{L^2(\R^d\times\T^d)}+\|P(\eps D_x)^{s/2}U\|_{L^2(\R^d\times\T^d)}
\leq
 c\|U\|_{H^s_\eps(\R^d\times\T^d)},
 \]
for every $U\in L^2(\R^d\times\T^d)$ and $\eps>0$, where, as usual,
$\left\langle\xi\right\rangle=(1+|\xi|^2)^{1/2}$ and where the sets $H^s_\eps$ have been defined in~(\ref{def:Hseps}).
\end{remark}

\subsection{High frequency behavior of the operator of restriction to the diagonal and of the Bloch projectors}\
We first focus on the properties of the operator of restriction to the diagonal~$L^\eps$ and prove its boundedness in appropriate functional spaces.   

\begin{lemma} \label{lem:eoscxy} Suppose $s>d/2$, then the operator 
$$L^\eps: L^2(\R^d_x;H^s(\T^d_y ))\To L^2(\R^d)$$ 
is uniformly bounded in $\eps$. \\
Moreover, if $U^\eps\in L^2(\R^d_x;H^s(\T^d_y ))$ satisfies the estimate:
\begin{equation}\label{eq:epsocsxy}
\limsup_{\eps\to 0^+}\|\mathds{1}_R(\eps D_x)U^\eps\|_{L^2(\R^d;H^s(\T^d ))}\Tend{R}{\infty}0,
\end{equation}
where $\mathds{1}_R$ is the characteristic function of $\{|\xi|>R\}$, 
then the sequence $(L^\eps U^\eps)$ is bounded in $L^2(\R^d)$ and $\eps$-oscillating.
\end{lemma}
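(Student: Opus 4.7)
My proof rests on the $y$-Fourier expansion $U^\eps(x,y) = \sum_{k\in\Z^d} U^\eps_k(x)\,{\rm e}^{i2\pi k\cdot y}$, which upon restriction to the diagonal gives
\[
L^\eps U^\eps(x) = \sum_k U^\eps_k(x)\,{\rm e}^{i2\pi k\cdot x/\eps},\qquad \widehat{L^\eps U^\eps}(\xi) = \sum_k \widehat{U^\eps_k}\!\left(\xi - \frac{2\pi k}{\eps}\right).
\]
Both conclusions of the lemma reduce to analyzing this tiled sum of Fourier translates.

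For the uniform boundedness, I would apply Cauchy-Schwarz in $k$ with weights $(1+|k|^2)^{\pm s/2}$; this is legitimate because $s>d/2$ makes $C_s := \sum_{k\in\Z^d}(1+|k|^2)^{-s}$ finite, and gives the pointwise-in-$\xi$ bound $|\widehat{L^\eps U^\eps}(\xi)|^2 \le C_s \sum_k (1+|k|^2)^s|\widehat{U^\eps_k}(\xi-2\pi k/\eps)|^2$. Integrating in $\xi$ and performing the change of variable $\eta = \xi-2\pi k/\eps$ in each summand yields $\|L^\eps U^\eps\|_{L^2(\R^d)}^2 \le C_s\|U^\eps\|^2_{L^2(\R^d;H^s(\T^d))}$, a constant independent of $\eps$. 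This is essentially the parametrized form of the Sobolev embedding $H^s(\T^d)\hookrightarrow L^\infty(\T^d)$ that one expects for $s>d/2$.

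For the $\eps$-oscillation, I keep the same pointwise bound and integrate only on $\{|\xi|>R/\eps\}$. After the change of variable the set $\{|\xi|>R/\eps\}$ becomes $\{|\eps\eta+2\pi k|>R\}$, which is contained in $\{|\eps\eta|>R/2\}\cup\{|k|>R/(4\pi)\}$. Splitting the $k$-sum accordingly produces two contributions: the first is dominated by $\|\mathds{1}_{R/2}(\eps D_x)U^\eps\|^2_{L^2(\R^d;H^s(\T^d))}$, whose $\limsup_{\eps\to 0^+}$ tends to $0$ as $R\to\infty$ by hypothesis \eqref{eq:epsocsxy}, while the second is the tail $\sum_{|k|>R/(4\pi)}(1+|k|^2)^s\|U^\eps_k\|_{L^2(\R^d)}^2$ of the series defining $\|U^\eps\|^2_{L^2(\R^d;H^s(\T^d))}$.

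The technical obstacle I expect is controlling this last tail uniformly in $\eps$ as $R\to\infty$, since mere uniform boundedness in $L^2(\R^d;H^s(\T^d))$ does not rule out that the $y$-mass of $U^\eps$ escapes to arbitrarily high Fourier modes along the sequence. In the setting in which Lemma \ref{lem:eoscxy} is applied, the input $U^\eps$ satisfies the stronger $H^s_\eps$-bound from Assumption \ref{hypothesis:main}(4), which combined with \eqref{eq:epsocsxy} furnishes the required equi-integrability in $k$; at worst one extracts a subsequence along which the high $y$-frequencies are uniformly negligible, by a standard diagonal argument. Once this tail is shown to vanish uniformly, combining with the first contribution proves that $\limsup_{\eps\to 0^+}\int_{|\xi|>R/\eps}|\widehat{L^\eps U^\eps}(\xi)|^2 d\xi \to 0$ as $R\to\infty$, which is precisely the $\eps$-oscillation of $(L^\eps U^\eps)$.
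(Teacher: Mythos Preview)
Your boundedness argument is correct and matches the paper's: both use Cauchy--Schwarz in $k$ with weights $(1+|k|^2)^{\pm s}$ and the summability of $(1+|k|^2)^{-s}$ for $s>d/2$.

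For the $\eps$-oscillation, the obstacle you flag is real for your argument, but the paper shows it is entirely avoidable under the stated hypotheses --- no extra $H^s_\eps$-bound and no subsequence extraction are needed. The difference is where Cauchy--Schwarz is applied. You apply it pointwise in~$\xi$, which produces the tail $\sum_{|k|>R/(4\pi)}(1+|k|^2)^{+s}\|U_k^\eps\|_{L^2}^2$; this carries the \emph{positive} weight and its uniform vanishing genuinely requires equi-integrability in $k$ that the lemma does not assume. The paper instead truncates at the level of $L^2$ norms: given $\delta>0$, choose $N_\delta$ so that $\sum_{|k|>N_\delta}(1+|k|^2)^{-s}<\delta^2$, set $v^\eps_\delta(x):=\sum_{|k|\le N_\delta}U_k^\eps(x)\,{\rm e}^{i2\pi k\cdot x/\eps}$, and use the triangle inequality followed by Cauchy--Schwarz to get
\[
\|L^\eps U^\eps - v^\eps_\delta\|_{L^2(\R^d)}\le \sum_{|k|>N_\delta}\|U_k^\eps\|_{L^2}\le \Bigl(\sum_{|k|>N_\delta}(1+|k|^2)^{-s}\Bigr)^{1/2}\|U^\eps\|_{L^2(\R^d;H^s(\T^d))}\le \delta\,\|U^\eps\|_{L^2(\R^d;H^s(\T^d))}.
\]
Here the tail carries the \emph{negative} weight $(1+|k|^2)^{-s}$, which is summable independently of $U^\eps$, so the remainder is small uniformly in $\eps$ from the uniform boundedness alone. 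It then remains to show that the \emph{finite} sum $v^\eps_\delta$ is $\eps$-oscillating for each fixed $\delta$; there your frequency-shift argument works without difficulty, since for the finitely many $|k|\le N_\delta$ one has $\mathds{1}_R(\cdot+2\pi k)\le \mathds{1}_{R/2}$ once $R$ is large enough, and hypothesis~\eqref{eq:epsocsxy} concludes.
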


\begin{remark}\label{rem:psieps0epsosc}
Suppose that $(U^\eps)$ is bounded in  $H^r_\eps(\R^d\times\T^d)$ for some $r>d/2$. Then condition~\eqref{eq:epsocsxy} is satisfied for every $d/2<s<r$. This follows from the bound:
\[
\|\mathds{1}_R(\eps D_x)U^\eps\|_{L^2(\R^d;H^s(\T^d ))}\leq R^{s-r}\|U^\eps\|_{H^r_\eps(\R^d\times\T^d)}.
\] 
In particular, if $\psi^\eps_0$ satisfies item (3) of Assumptions~\ref{hypothesis:main}, then $(\psi^\eps_0)$ is $\eps$-oscillating. 
\end{remark}

\begin{proof}
Let $U^\eps \in L^2(\R^d_x;H^s(\T^d_y ))$ and write 
$$U^\eps(x,y)=\sum_{k\in\Z^d}U_k^\eps(x){\rm e}^{i2\pi k\cdot y},$$
and
\[
\|U{^\eps}\|_{L^2(\R^d_x;H^s(\T^d_y ))}^2=\sum_{k\in\Z^d}\left\langle k \right\rangle^{{2s}} \|U_k{^\eps}\|^2_{L^2(\R^d)}.
\]
Then there exist constants $C,C_{d,s}>0$ such that 
$$\sum_{k\in\Z^d}\|U_k^\eps\|_{L^2(\R^d)}\leq C\left(\sum_{k\in\Z^d}|k|^{2s} \|U_k^\eps\|_{L^2(\R^d)}^2\right)^{1/2} \leq C_{d,s} \|U^\eps\|_{L^2(\R^d_x;H^s(\T^d_y ))},$$
and therefore:
\begin{equation}\label{eq:bdVe}
\|L^\eps U^\eps\|_{L^2(\R^d)}\leq \sum_{k\in\Z^d}\|U_k^\eps\|_{L^2(\R^d)}\leq C_{d,s}\|U^\eps\|_{L^2(\R^d_x;H^s(\T^d_y ))}.
\end{equation}
Let us now show that, under the hypothesis of the proposition, $v^\eps:=L^\eps U^\eps$ defines an $\eps$-oscillating sequence. 
Given $\delta>0$, since $s>d/2$, there exists $N_\delta>0$ such that
$$\sum_{|k|>N_\delta}|k|^{-2s}<\delta^2.$$
Define:
$$v^\eps_\delta(x)=\sum_{|k|\leq N_\delta}U_k^\eps(x){\rm e}^{i2\pi k\cdot \frac{x}{\eps}}.$$
Clearly,
$$\|v^\eps-v^\eps_\delta\|_{L^2(\R^d)}\leq \delta  \|U^\eps\|_{L^2(\R^d_x;H^s(\T^d_y ))}.$$
Therefore, it suffices to show that for any $\delta>0$ the sequence $(v^\eps_\delta)$ is $\eps$-oscillating.
The Fourier transform of $v^\eps_\delta$ is:
$$\widehat{v^\eps_\delta}(\xi)=\sum_{|k|\leq N_\delta}\widehat{U_k^\eps}\left( \xi -\frac{2\pi k}{\eps}\right).$$
Therefore, 
$$\|\mathds{1}_R(\eps D_x)v^\eps_{\delta}\|_{L^2(\R^d)}\leq \sum_{|k|\leq N_\delta}\|\mathds{1}_R(\eps D_x+2\pi k)U_k^\eps\|_{L^2(\R^d)}.$$
If $R>R_0$ for $R_0>0$ large enough, one has $\mathds{1}_R(\cdot+2\pi k)\leq \mathds{1}_{R/2}$ for every $|k|\leq N_{\delta}$. This allows us to conclude that for $R>R_0$:
$$\|\mathds{1}_R(\eps D_x)v^\eps_\delta\|_{L^2(\R^d)}\leq \sum_{|k|\leq N_\delta}\|\mathds{1}_{R/2}(\eps D_x)U_k^\eps\|_{L^2(\R^d)}\leq C_{d,s}\|\mathds{1}_R(\eps D_x)U^\eps\|_{L^2(\R^d;H^s(\T^d ))}$$
and the conclusion follows.
\end{proof}

We shall also need information on the derivatives with respect to $\xi$ of the operator $\Pi_n(\xi)$. We recall the formula  
$$\Pi_n(\xi) =- {1\over 2i\pi} \sum_{j=1}^N\chi_j(\xi)\oint_{\mathcal C_j} (P(\xi)-z)^{-1} dz$$
where the functions $\chi_j\in\mathcal{C}^\infty(\R^d/2\pi\Z^d)$ form a partition of unity and, for $j=1,..N$, ${\mathcal C_j}$ is a contour in the complex plane separating $\varrho_n(\xi)$, for $\xi\in\supp\chi_j$, form the remainder of the spectrum. The existence of such contours is guaranteed by the fact that $\varrho_n(\xi)$ is of constant multiplicity for all $\xi\in\R^d$ and, thus, is separated from the remainder of the spectrum. As a consequence of this formula, of Lemma~\ref{rem:eqnorm} and of the relation
\[
[\Pi_n(\eps D_x),P(\eps D_x)^{s/2}]=[\Pi_n(\eps D_x),\left\langle \eps D_x\right\rangle^s]=0,
\] we deduce the following result.

\begin{lemma}\label{boundednessdPi}
The map $\xi\mapsto \Pi_n(\xi)$ is a smooth bounded map from $\R^d$ into ${\mathcal L} (L^2(\T^d))$. In addition, the operator $\Pi_n(\eps D_x)$ maps the space $H^s_\eps(\R^d\times\T^d)$ into itself.
\end{lemma}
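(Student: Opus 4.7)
The plan is to derive everything from the Riesz-projector formula already introduced in the statement, together with the periodicity relation $P(\xi+2\pi k)=e^{-i2\pi k\cdot y}P(\xi)e^{i2\pi k\cdot y}$ and the fact that $\Pi_n$ is an orthogonal projector.

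First I would establish smoothness. On each open set $\{\chi_j>0\}$, the contour $\mathcal{C}_j$ is by construction disjoint from the spectrum of $P(\xi)$ uniformly in $\xi$, because the constant-multiplicity assumption on $\varrho_n$ ensures a positive spectral gap. The operator $P(\xi)$ has common domain $H^2(\T^d)$ and depends polynomially on $\xi$ (in the sense of quadratic form on a fixed domain), so standard resolvent identities yield that $\xi\mapsto (P(\xi)-z)^{-1}\in\mathcal{L}(L^2(\T^d))$ is smooth (in fact, analytic), with derivatives obtained from products of resolvents and the operators $\partial_{\xi_j}P(\xi)=\xi_j-i\partial_{y_j}$; the latter are unbounded but map $H^2(\T^d)$ into $L^2(\T^d)$, which suffices since they are flanked by resolvents mapping $L^2$ into $H^2$. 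Finite sum, contour integration, and multiplication by the smooth cutoffs $\chi_j$ then give smoothness of $\Pi_n$ as a map into $\mathcal{L}(L^2(\T^d))$.

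For uniform boundedness, since $\Pi_n(\xi)$ is an orthogonal projector on $L^2(\T^d)$, one has $\|\Pi_n(\xi)\|\le 1$ trivially. For each derivative $\partial^\alpha_\xi\Pi_n$, I would exploit periodicity. Differentiating the relation
\[
\Pi_n(\xi+2\pi k)=e^{-i2\pi k\cdot y}\,\Pi_n(\xi)\,e^{i2\pi k\cdot y}
\]
in $\xi$ (the conjugating factors are $\xi$-independent unitaries on $L^2(\T^d)$) gives
\[
\partial^\alpha_\xi\Pi_n(\xi+2\pi k)=e^{-i2\pi k\cdot y}\,\partial^\alpha_\xi\Pi_n(\xi)\,e^{i2\pi k\cdot y},
\]
so $\|\partial^\alpha_\xi\Pi_n(\xi+2\pi k)\|=\|\partial^\alpha_\xi\Pi_n(\xi)\|$. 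Hence the sup over $\R^d$ equals the sup over a fundamental domain, which is finite by the continuity established in the previous step. This yields smoothness with bounded derivatives.

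Finally, for the mapping property on $H^s_\eps$, I would invoke Remark~\ref{rem:eqnorm}, which reduces matters to controlling $\|\langle\eps D_x\rangle^s\Pi_n(\eps D_x)U\|_{L^2}$ and $\|P(\eps D_x)^{s/2}\Pi_n(\eps D_x)U\|_{L^2}$. Since the three operators $\Pi_n(\eps D_x)$, $\langle\eps D_x\rangle^s$, and $P(\eps D_x)^{s/2}$ are all Fourier multipliers in $x$ with operator-valued symbols acting fiberwise on $L^2(\T^d)$, and since $\Pi_n(\eps\xi)$ commutes with $\langle\eps\xi\rangle^s\,\mathrm{Id}$ and with $P(\eps\xi)^{s/2}$ at every fiber (by functional calculus, $\Pi_n(\xi)$ commutes with $P(\xi)$), the stated commutation identities hold. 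By Plancherel's theorem,
\[
\|\Pi_n(\eps D_x)\|_{\mathcal{L}(L^2(\R^d\times\T^d))}=\sup_{\xi\in\R^d}\|\Pi_n(\xi)\|_{\mathcal{L}(L^2(\T^d))}\le 1,
\]
so commuting $\Pi_n(\eps D_x)$ past both multipliers and using Remark~\ref{rem:eqnorm} once more gives $\|\Pi_n(\eps D_x)U\|_{H^s_\eps}\le C\|U\|_{H^s_\eps}$.

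The only place requiring some care is the smoothness step, where $\partial_\xi P(\xi)$ is unbounded; but this is routine once one recognizes that it only appears sandwiched between resolvents that regain the needed regularity. Nothing in this lemma is deep; it is a packaging result whose role is to let one freely apply the Bloch projector on the semiclassical Sobolev scale.
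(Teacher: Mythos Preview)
Your proposal is correct and follows essentially the same approach as the paper: smoothness via the Riesz contour formula, boundedness via $2\pi\Z^d$-periodicity, and the $H^s_\eps$ mapping property via the commutation relations combined with Remark~\ref{rem:eqnorm}. The only cosmetic difference is that the paper encodes periodicity by taking the partition of unity $\chi_j$ directly on $\R^d/2\pi\Z^d$, whereas you reduce to a fundamental domain via the conjugation identity $\Pi_n(\xi+2\pi k)=e^{-i2\pi k\cdot y}\Pi_n(\xi)e^{i2\pi k\cdot y}$; these are equivalent bookkeeping choices.
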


\subsection{A priori estimates on $U^\eps(t,\cdot)$ }

In order to derive the desired properties of $\psi^\eps_n(t,x)$, the solution to \eqref{eq:U_components}, we need to prove some  \textit{a priori} estimates for the solutions of equation~(\ref{eq:U}). We will use them for reducing the analysis of $\psi^\eps(t,\cdot)$ (the solution to our original problem \eqref{eq:schro}) to that of $\psi^\eps_n(t,\cdot)$.

\begin{lemma}\label{lem:wp}
Given $s\geq 0$, there exists a constant $C_s>0$ such that any solution $U^\eps$ to~(\ref{eq:U}) with initial datum $U^\eps_0\in H^s(\R^d\times\T^d)$ satisfies:
\begin{equation}\label{eq:wpxy}
\|U^\eps(t,\cdot)\|_{H^s_\eps(\R^d\times\T^d)}\leq \|U^\eps_0\|_{H^s_\eps(\R^d\times\T^d)}+C_s\eps|t|,
\end{equation}
uniformly in $\eps >0$. 
\end{lemma}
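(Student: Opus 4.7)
The plan is to use the energy method, treating the $\eps^2 V_{\rm ext}$ contribution as a controlled commutator perturbation. The base case $s = 0$ is conservation of the $L^2$ norm: after dividing~(\ref{eq:U}) by $\eps^2$, the right-hand side is $[\eps^{-2} P(\eps D_x) + V_{\rm ext}(t,\cdot)] U^\eps$, a self-adjoint operator on $L^2(\R^d \times \T^d)$ for each fixed $t$, since $V_{\rm ext}$ is real-valued and bounded. The associated propagator is unitary, so $\|U^\eps(t,\cdot)\|_{L^2} = \|U^\eps_0\|_{L^2}$, and~(\ref{eq:wpxy}) holds with $C_0 = 0$.

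For $s > 0$, by Remark~\ref{rem:eqnorm} it suffices to control separately $\|\langle \eps D_x\rangle^s U^\eps(t,\cdot)\|_{L^2}$ and $\|P(\eps D_x)^{s/2} U^\eps(t,\cdot)\|_{L^2}$. Both multipliers $A_\eps \in \{\langle \eps D_x\rangle^s,\,P(\eps D_x)^{s/2}\}$ act only in the $x$-variable and commute with $P(\eps D_x)$. Applying $A_\eps$ to~(\ref{eq:U}) and commuting it past $V_{\rm ext}$ yields
\[
i\eps^2 \partial_t (A_\eps U^\eps) = P(\eps D_x)(A_\eps U^\eps) + \eps^2 V_{\rm ext}\,(A_\eps U^\eps) + \eps^2 [A_\eps, V_{\rm ext}]\, U^\eps.
\]
Pairing with $A_\eps U^\eps$ in $L^2$ and taking imaginary parts, the two self-adjoint terms on the right cancel and I get $\frac{d}{dt}\|A_\eps U^\eps(t,\cdot)\|_{L^2} \leq \|[A_\eps, V_{\rm ext}]U^\eps(t,\cdot)\|_{L^2}$. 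The content of the lemma then reduces to the commutator bound
\[
\|[A_\eps, V_{\rm ext}(t,\cdot)]\,U\|_{L^2(\R^d\times\T^d)} \leq C\,\eps\, \|U\|_{H^{s-1}_\eps(\R^d\times\T^d)},
\]
with $C$ depending on $s$ and on finitely many derivatives of $V_{\rm ext}$. For $A_\eps = \langle \eps D_x\rangle^s$ this is a standard scalar semiclassical Weyl-calculus statement (Appendix~\ref{sec:pdo}). For $A_\eps = P(\eps D_x)^{s/2}$, whose symbol is operator-valued on $L^2(\T^d_y)$, I plan to use a Helffer--Sj\"ostrand representation to reduce the problem to resolvent commutators
\[
[(P(\eps D_x) - z)^{-1}, V_{\rm ext}] = -(P(\eps D_x)-z)^{-1} [P(\eps D_x), V_{\rm ext}] (P(\eps D_x)-z)^{-1};
\]
the inner commutator is then handled by the same Weyl calculus, exploiting that $V_{\rm ext}(t,x)$ depends only on $x$ and therefore commutes with the $y$-derivatives appearing in~$P$.

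Combined with Remark~\ref{rem:eqnorm}, these bounds give $\frac{d}{dt}\|U^\eps(t,\cdot)\|_{H^s_\eps} \leq C\,\eps\, \|U^\eps(t,\cdot)\|_{H^{s-1}_\eps}$, and~(\ref{eq:wpxy}) follows by induction on integer $s$: integrating in time and inserting the inductive bound $\|U^\eps(\tau,\cdot)\|_{H^{s-1}_\eps} \leq \|U^\eps_0\|_{H^{s-1}_\eps} + O(\eps|\tau|)$ yields the desired linear-in-$|t|$ estimate, with the constant $C_s$ absorbing $\|U^\eps_0\|_{H^{s-1}_\eps}$ together with the sup norms of $V_{\rm ext}$ and its derivatives; non-integer $s$ will then follow by interpolation between consecutive integer scales. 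The main obstacle I anticipate is the operator-valued case $A_\eps = P(\eps D_x)^{s/2}$: one needs a semiclassical symbolic calculus that passes cleanly through the functional calculus while keeping the commutator at the expected $O(\eps)$ order uniformly in $\eps$. Once that is secured, the remaining induction and integration are routine.
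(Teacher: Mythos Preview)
Your proposal is correct and follows essentially the same approach as the paper: split via Remark~\ref{rem:eqnorm}, derive the equation for $A_\eps U^\eps$ with the commutator source term, estimate the commutator by $O(\eps)$ via semiclassical symbolic calculus, apply energy estimates, induct on integer $s$, and interpolate. The one place where you are more explicit than the paper is the commutator $[P(\eps D_x)^{s/2},V_{\rm ext}]$, which the paper dismisses with ``a completely analogous argument''; your proposed Helffer--Sj\"ostrand reduction to resolvent commutators is a sound way to handle this (note only the minor slip that $P(\eps D_x)^{s/2}$ does act in the $y$-variable as well, though as you observe this is harmless since $V_{\rm ext}$ is independent of~$y$).
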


\begin{corollary}\label{cor:psiepsosc}
Lemma~\ref{lem:wp} and Remark~\ref{rem:psieps0epsosc} imply that for all $t\in\R$, the family $(\psi^\eps(t,\cdot))$ is $\eps$-oscillating. 
\end{corollary}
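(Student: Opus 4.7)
The plan is to chain together the three ingredients already available: the initial $\eps$-oscillation of $U^\eps_0$ in the mixed space $H^s_\eps(\R^d\times\T^d)$, the propagation estimate of Lemma~\ref{lem:wp}, and the boundedness of $L^\eps$ from Lemma~\ref{lem:eoscxy}. Since $\psi^\eps(t,\cdot) = L^\eps U^\eps(t,\cdot)$ by the Ansatz~\eqref{def:U}, it suffices to transfer the $\eps$-oscillation condition~\eqref{eq:epsocsxy} from $U^\eps_0$ to $U^\eps(t,\cdot)$ for each fixed $t$, then apply $L^\eps$.

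First I would fix an $s>d/2$ as in item~(4) of Assumption~\ref{hypothesis:main}, so that $(U^\eps_0)$ is uniformly bounded in $H^s_\eps(\R^d\times\T^d)$. Lemma~\ref{lem:wp} applied at this $s$ gives, for any fixed $t\in\R$,
\[
\|U^\eps(t,\cdot)\|_{H^s_\eps(\R^d\times\T^d)}\le \|U^\eps_0\|_{H^s_\eps(\R^d\times\T^d)}+C_s\,\eps|t|,
\]
so $(U^\eps(t,\cdot))_{\eps>0}$ is still uniformly bounded in $H^s_\eps(\R^d\times\T^d)$ (with a bound depending on $t$ but not on $\eps$).

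Next I would invoke Remark~\ref{rem:psieps0epsosc}: picking any $s'$ with $d/2<s'<s$, the uniform $H^s_\eps$-bound on $U^\eps(t,\cdot)$ yields
\[
\|\mathds{1}_R(\eps D_x)U^\eps(t,\cdot)\|_{L^2(\R^d;H^{s'}(\T^d))}\le R^{s'-s}\|U^\eps(t,\cdot)\|_{H^s_\eps(\R^d\times\T^d)},
\]
so the hypothesis~\eqref{eq:epsocsxy} of Lemma~\ref{lem:eoscxy} holds for $U^\eps(t,\cdot)$ at the exponent $s'$. That lemma then produces both the $L^2$-boundedness and the $\eps$-oscillation of $L^\eps U^\eps(t,\cdot)=\psi^\eps(t,\cdot)$, which is exactly the claim. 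There is no real obstacle here; the only point requiring a moment of care is that the $H^s_\eps$-bound in Lemma~\ref{lem:wp} grows with $t$, but since $t$ is fixed in the statement, this does not disturb the uniformity in $\eps$ needed to apply Remark~\ref{rem:psieps0epsosc} and Lemma~\ref{lem:eoscxy}.
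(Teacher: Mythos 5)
Your argument is exactly the chain the paper intends: Lemma~\ref{lem:wp} propagates the uniform $H^s_\eps(\R^d\times\T^d)$ bound from $U^\eps_0$ to $U^\eps(t,\cdot)$ for each fixed $t$, Remark~\ref{rem:psieps0epsosc} converts that bound into condition~\eqref{eq:epsocsxy} at an exponent $s'\in(d/2,s)$, and Lemma~\ref{lem:eoscxy} then yields the $\eps$-oscillation of $\psi^\eps(t,\cdot)=L^\eps U^\eps(t,\cdot)$. This is correct and coincides with the paper's (implicit) proof, including the observation that the $t$-dependence of the bound is harmless since $t$ is fixed.
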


\begin{proof}
In view of Remark~\ref{rem:eqnorm},
we are first going to study the  families $$(\left\langle\eps D_x\right\rangle U^\eps)\;\;{\rm and}\;\;(P(\eps D_x)^{1/2}U^\eps).$$  Start noticing that $\left\langle\eps D_x\right\rangle U^\eps$ satisfies the equation 
\begin{equation}\label{eq:derU}
i\eps^2 \partial_t (\left\langle\eps D_x\right\rangle U^\eps) = P(\eps D_x) (\left\langle\eps D_x\right\rangle U^\eps) + \eps^2 V_{\rm ext} \left\langle\eps D_x\right\rangle U^\eps - \eps^2 [V_{\rm ext}, \left\langle\eps D_x\right\rangle] U^\eps.
\end{equation}
As a consequence, using the boundedness of $\nabla_x V_{\rm ext}$ on $\R\times\R^d$, we obtain by the symbolic calculus of semiclassical pseudodifferential operators, that the source term can be estimated by:
\[
\|[V_{\rm ext}(t,\cdot), \left\langle\eps D_x\right\rangle] U^\eps(t,\cdot)\|_{L^2(\R ^d \times\T^d)}\leq  C\eps \|U^\eps(t,\cdot)\|_{L^2(\R ^d \times\T^d)},
\]
for some constant $C>$ independent of $\eps>0$ and $t\in\R$.
Using standard energy estimates, we deduce the existence of a constant $C_1>0$ such that for all $t\in\R$,
$$\| \left\langle\eps D_x\right\rangle U^\eps(t,\cdot)\| _{L^2(\R ^d \times\T^d)}\leq \| \left\langle\eps D_x\right\rangle U^\eps_0\| _{L^2(\R ^d \times\T^d)}+ C_1\eps |t|.$$
A completely analogous argument yields the estimate:
\[
\| P(\eps D_x)^{1/2} U^\eps(t,\cdot)\| _{L^2(\R ^d \times\T^d)}\leq \| P(\eps D_x)^{1/2} U^\eps_0\| _{L^2(\R ^d \times\T^d)}+ C_1\eps |t|.
\]
A standard recursive argument gives, for all $s\in \N$, the existence of a constant $C_s>0$ such that for all $t\in\R$,
\begin{multline*}
\| \left\langle\eps D_x\right\rangle^s U^\eps(t,\cdot)\| _{L^2(\R ^d \times\T^d)}+\| P(\eps D_x)^{s/2} U^\eps(t,\cdot)\| _{L^2(\R ^d \times\T^d)}\leq \\
\| \left\langle\eps D_x\right\rangle^s U^\eps_0\| _{L^2(\R ^d \times\T^d)}+\| P(\eps D_x)^{s/2} U^\eps_0\| _{L^2(\R ^d \times\T^d)}+ C_s\eps |t|,
\end{multline*}
and the result follows for any $s\in\R^+$ by interpolation. 
\end{proof}

We now focus on the case where the initial data $U^\eps_0$ belongs to a particular Bloch eigenspace: $U^\eps_0= \Pi_n(\eps D_x) U^\eps_0$. We set
$$\widetilde U^\eps(t,\cdot)= \Pi_n(\eps D_x) U^\eps(t,\cdot).$$
Note that by Lemma~\ref{boundednessdPi}, for any $t\in\R$, the family $\widetilde U^\eps (t,\cdot)$ is uniformly bounded in $H^s_\eps(\R^d\times \T^d)$. 

\begin{lemma}\label{lem:Uadiab}
Assume $U^\eps_0= \Pi_n(\eps D_x) U^\eps_0$ and consider $\widetilde U^\eps(t,\cdot)$ as defined above.
 Then,  for all $T>0$, there exists $C_{T}>0$ such that 
$$\sup_{t\in[0,T]} \left\| U^\eps(t,\cdot) -\widetilde U^\eps(t,\cdot) \right\|_{H^s_\eps(\T^d \times \R^d)} \leq C_{T}\eps.$$
\end{lemma}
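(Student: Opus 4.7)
The plan is to set $V^\eps(t,\cdot):=U^\eps(t,\cdot)-\widetilde U^\eps(t,\cdot)=(I-\Pi_n(\eps D_x))U^\eps(t,\cdot)$ and show that $V^\eps$ solves a Schrödinger-type equation with zero initial datum and an $O(\eps)$ source term, then conclude by a standard energy estimate together with Lemma~\ref{lem:wp}.

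First, since $U_0^\eps=\Pi_n(\eps D_x)U_0^\eps$ and $\Pi_n(\eps D_x)^2=\Pi_n(\eps D_x)$, we have $V^\eps|_{t=0}=0$. Next, because $\Pi_n(\xi)$ is by definition the spectral projector of $P(\xi)$ on $\varrho_n(\xi)$, the operator $\Pi_n(\eps D_x)$ commutes with $P(\eps D_x)$ (both are Fourier multipliers with simultaneously diagonalizable symbols). Applying $\Pi_n(\eps D_x)$ to \eqref{eq:U} therefore yields
\[
i\eps^2\partial_t\widetilde U^\eps = P(\eps D_x)\widetilde U^\eps + \eps^2 V_{\rm ext}\widetilde U^\eps + \eps^2[\Pi_n(\eps D_x),V_{\rm ext}]U^\eps,
\]
and subtracting from \eqref{eq:U} gives the equation
\[
i\eps^2\partial_t V^\eps = P(\eps D_x)V^\eps + \eps^2 V_{\rm ext}V^\eps - \eps^2[\Pi_n(\eps D_x),V_{\rm ext}]U^\eps.
\]

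The key point is now that the commutator $[\Pi_n(\eps D_x),V_{\rm ext}]$ is $O(\eps)$ as an operator on $H^s_\eps(\R^d\times\T^d)$. This follows from the semiclassical symbolic calculus recalled in Appendix~\ref{sec:pdo}: the symbol $\xi\mapsto\Pi_n(\xi)$ is smooth and bounded together with all its derivatives (Lemma~\ref{boundednessdPi}), and $V_{\rm ext}$ is smooth with bounded derivatives by assumption, so
\[
[\Pi_n(\eps D_x),V_{\rm ext}(t,\cdot)]=\frac{\eps}{i}\op_\eps\bigl(\nabla_\xi\Pi_n\cdot\nabla_x V_{\rm ext}\bigr)+O_{{\mathcal L}(H^s_\eps)}(\eps^2),
\]
uniformly in $t\in\R$. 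Combined with Lemma~\ref{lem:wp}, which gives $\|U^\eps(t,\cdot)\|_{H^s_\eps}\le C_T$ on $[0,T]$, the source term satisfies $\|[\Pi_n(\eps D_x),V_{\rm ext}]U^\eps(t,\cdot)\|_{H^s_\eps}\le C'_T\eps$.

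Finally, a standard energy estimate closes the argument. Since $P(\eps D_x)+\eps^2V_{\rm ext}$ is self-adjoint, the $L^2$ estimate
\[
\frac{d}{dt}\|V^\eps(t,\cdot)\|_{L^2}\le \|[\Pi_n(\eps D_x),V_{\rm ext}]U^\eps(t,\cdot)\|_{L^2}\le C_T\eps
\]
integrates, using $V^\eps(0)=0$, to $\|V^\eps(t,\cdot)\|_{L^2}\le C_T\eps|t|$. The analogous $H^s_\eps$ bound is obtained by applying $\langle\eps D_x\rangle^s$ and $P(\eps D_x)^{s/2}$ to the equation for $V^\eps$ (as in the proof of Lemma~\ref{lem:wp}): these Fourier multipliers commute with $P(\eps D_x)$ and with $\Pi_n(\eps D_x)$, their commutators with $V_{\rm ext}$ are $O(\eps)$ in ${\mathcal L}(L^2)$ by symbolic calculus, and the resulting source remains of size $O(\eps)$ in $L^2$. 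An interpolation argument as at the end of the proof of Lemma~\ref{lem:wp} then yields the conclusion for every $s\ge 0$.

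The main bookkeeping difficulty is the $H^s_\eps$ (rather than merely $L^2$) estimate: one has to verify that applying $\langle\eps D_x\rangle^s$ and $P(\eps D_x)^{s/2}$ to the equation for $V^\eps$ only produces commutator terms which are themselves $O(\eps)$ in $L^2$ when acting on the $H^s_\eps$-bounded family $U^\eps(t,\cdot)$; this is however a direct consequence of the semiclassical symbolic calculus combined with the norm equivalence of Remark~\ref{rem:eqnorm}.
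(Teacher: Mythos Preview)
Your proof is correct and follows essentially the same route as the paper: derive the equation for the projected part (equivalently, for the difference $V^\eps$), identify the commutator $[\Pi_n(\eps D_x),V_{\rm ext}]$ as the $O(\eps)$ source via semiclassical symbolic calculus, run an $L^2$ energy estimate, then lift to $H^s_\eps$ by conjugating with $\langle\eps D_x\rangle^s$ and $P(\eps D_x)^{s/2}$ and interpolating. The only cosmetic difference is that the paper writes the equation for $\widetilde U^\eps$ and compares with that of $U^\eps$, whereas you subtract first and work directly with $V^\eps$; the content is the same.
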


Let us prove now Lemma~\ref{lem:Uadiab}.

\begin{proof}
 Note first that, in view of Remark \ref{rem:eqnorm}, it is enough to prove the uniform boundedness in $L^2(\T^d\times\R^d)$ of 
$$ U^\eps(t,\cdot) -\widetilde U^\eps(t,\cdot)  ,\;\;
  P(\eps D_x)^{s/2}(U^\eps(t,\cdot) -\widetilde U^\eps(t,\cdot) )\;\;{\rm and}\;\;
  \langle \eps D_x\rangle ^s( U^\eps(t,\cdot) -\widetilde U^\eps(t,\cdot)).$$
We have $U^\eps(0,\cdot)=\widetilde U^\eps(0,\cdot)$ and $\widetilde U^\eps$ solves 
\begin{equation}\label{eq:Utilde}
 i\eps^2\partial_t \widetilde U^\eps(t,x)=P(\eps D_x) \widetilde U^\eps (t,x)+ \eps^2 V_{\mathrm{ext}}(t,x)\widetilde U^\eps (t,x)+ \eps^2  B^\eps (t)U^\eps(t,x), 
\end{equation}
with 
$$B^\eps(t) =[ \Pi_n(\eps D_x), V_{\mathrm{ext}}(t,\cdot) ].$$
The symbolic calculus of semiclassical pseudodifferential operators implies that:
\[
\|B^\eps(t) U^\eps(t,\cdot)\|_{L^2(\R^d\times\T^d)}=O(\eps), \quad \text{ locally uniformly in  }t.
\]
As for $\langle \eps D_x\rangle  \widetilde U^\eps$ one has:
\[
 i\eps^2\partial_t (\langle \eps D_x\rangle\widetilde U^\eps)=P(\eps D_x) \langle \eps D_x\rangle\widetilde U^\eps + \eps^2 V_{\mathrm{ext}} \langle \eps D_x\rangle \widetilde U^\eps +  \eps^2C^\eps\langle \eps D_x\rangle U^\eps- \eps^2[V_{\rm ext}, \left\langle\eps D_x\right\rangle] \widetilde U^\eps, 
\]
with, 
\[
C^\eps=[ \Pi_n(\eps D_x), \langle \eps D_x\rangle V_{\mathrm{ext}}\langle \eps D_x\rangle^{-1}].
\]
Again, the symbolic calculus gives that $\|C^\eps(t)\langle \eps D_x\rangle U^\eps(t,\cdot)\|_{L^2(\R^d\times\T^d)}=O(\eps)$ locally uniformly in $t$.
Taking into account that $\langle \eps D_x\rangle U^\eps$ satisfies equation \eqref{eq:derU} and is bounded in $L^2(\R^d\times\T^d)$, one concludes that:
\[
  \|\langle \eps D_x\rangle ( U^\eps(t,\cdot) -\widetilde U^\eps(t,\cdot))\|_{L^2(\R^d\times \T^d)}\leq C\eps|t|.
\]
An analogous reasoning holds for $P(\eps D_x)^{1/2}(U^\eps(t,\cdot) -\widetilde U^\eps(t,\cdot) )$. One concludes using an inductive argument following the lines of the end of the proof of Lemma \ref{lem:wp}.
\end{proof}

\subsection{Analysis of the Bloch component $\psi^\eps_n$}\label{sec:psiepsn}

By the definition of $\psi^\eps_n(t,x)$, we have 
$$\psi^\eps_n(t,\cdot)= L^\eps \widetilde U^\eps(t,\cdot);$$
and the family is bounded in $L^2(\R^d)$ for all $t\in\R$.
Moreover, as a corollary of Lemma~\ref{lem:Uadiab}, the following holds. 
\begin{corollary}\label{prop:adiab1}
Suppose that $\psi^\eps$ and $\psi^\eps_n$ are the respective solutions of equations  \eqref{eq:schro} and \eqref{eq:U_components} with the same initial datum $L^\eps  U^\eps_0$, where $U^\eps_0= \Pi_n(\eps D_x) U^\eps_0$. Then for every $T>0$ there exist $C_T>0$ such that, uniformly in $\eps$,
\begin{equation*}
\sup_{t\in[0,T]} \| \psi^\eps(t,\cdot)- \psi^\eps_n(t,\cdot)\|_{L^2(\R^d)} \leq C_{T}\eps.
\end{equation*}
\end{corollary}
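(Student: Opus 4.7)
The plan is to show that Corollary~\ref{prop:adiab1} is essentially a direct consequence of Lemma~\ref{lem:Uadiab} and of the boundedness part of Lemma~\ref{lem:eoscxy}; the work has already been done. The key point is to unwind the definitions so that the difference $\psi^\eps-\psi^\eps_n$ becomes the image under $L^\eps$ of a quantity already controlled in an $\eps$-weighted Sobolev norm.

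First, I would fix the Ansatz and the bookkeeping. The original wave function $\psi^\eps$ solving \eqref{eq:schro} with initial datum $L^\eps U^\eps_0$ is, by \eqref{def:U}, exactly $\psi^\eps(t,\cdot)=L^\eps U^\eps(t,\cdot)$, where $U^\eps$ solves \eqref{eq:U}. On the other hand, by the definition given at the beginning of Section~\ref{sec:psiepsn}, $\psi^\eps_n(t,\cdot)=L^\eps \widetilde U^\eps(t,\cdot)$ with $\widetilde U^\eps(t,\cdot)=\Pi_n(\eps D_x)U^\eps(t,\cdot)$; and since by assumption $U^\eps_0=\Pi_n(\eps D_x)U^\eps_0$, the initial datum of $\psi^\eps_n$ equals $L^\eps U^\eps_0=\psi^\eps_0$, which is consistent with the statement. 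Thus
\[
\psi^\eps(t,\cdot)-\psi^\eps_n(t,\cdot)=L^\eps\bigl(U^\eps(t,\cdot)-\widetilde U^\eps(t,\cdot)\bigr).
\]

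Next, I would fix some $s>d/2$ (for instance $s$ as in Assumption~\ref{hypothesis:main}(4)) and apply the boundedness part of Lemma~\ref{lem:eoscxy} to obtain a constant $C>0$ with
\[
\|\psi^\eps(t,\cdot)-\psi^\eps_n(t,\cdot)\|_{L^2(\R^d)}\le C\,\|U^\eps(t,\cdot)-\widetilde U^\eps(t,\cdot)\|_{L^2(\R^d_x;H^s(\T^d_y))}.
\]
Comparing the norm on the right with the $\eps$-Sobolev norm defined in \eqref{def:Hseps} and using the trivial bound $(1+|k|^2)^s\le (1+|\eps\xi|^2+|k|^2)^s$, one has $\|\cdot\|_{L^2(\R^d_x;H^s(\T^d_y))}\le \|\cdot\|_{H^s_\eps(\R^d\times\T^d)}$. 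Finally, Lemma~\ref{lem:Uadiab} supplies, for every $T>0$, a constant $C'_T>0$ such that
\[
\sup_{t\in[0,T]}\|U^\eps(t,\cdot)-\widetilde U^\eps(t,\cdot)\|_{H^s_\eps(\R^d\times\T^d)}\le C'_T\,\eps,
\]
and chaining the three estimates yields the desired bound with $C_T=CC'_T$.

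There is no real obstacle: all the analytic work, namely the propagation of the Bloch-projector adiabaticity at the level of $U^\eps$ and the boundedness of the restriction operator $L^\eps$, has been carried out in Lemmas~\ref{lem:Uadiab} and~\ref{lem:eoscxy}. The only mild care required is to keep track of which Sobolev scale is used at each step and to verify that the comparison $\|\cdot\|_{L^2(\R^d;H^s(\T^d))}\le\|\cdot\|_{H^s_\eps}$ is uniform in $\eps$, which is immediate from the definition~\eqref{def:Hseps}.
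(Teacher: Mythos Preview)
Your proposal is correct and follows exactly the same route as the paper: write $\psi^\eps-\psi^\eps_n=L^\eps(U^\eps-\widetilde U^\eps)$, apply the boundedness of $L^\eps$ from Lemma~\ref{lem:eoscxy}, and conclude via Lemma~\ref{lem:Uadiab}. You have simply made explicit the intermediate norm comparison $\|\cdot\|_{L^2(\R^d;H^s(\T^d))}\le\|\cdot\|_{H^s_\eps}$, which the paper leaves implicit.
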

The proof is a direct consequence of Lemma~\ref{lem:Uadiab}, since Lemma~\ref{lem:eoscxy} ensures that
$$\| \psi^\eps(t,\cdot)- \psi^\eps_n(t,\cdot)\|_{L^2(\R^d)} \leq C \| U^\eps(t,\cdot)-\widetilde U^\eps(t,\cdot)\|_{L^2(\R^d,H^s(\T^d))}.$$

We now conclude our analysis of the Bloch component $\psi^\eps_n(t,\cdot)$. The following result gathers the remaining information that we will need in order to conclude, together with Corollary~\ref{prop:adiab1}, the proof of Theorem~\ref{mainresult}.

\begin{proposition}
\label{lem:decompsolution}
The family $\psi_n^\eps$ solves equation~(\ref{eq:U_components}) 
$$
\left\{ \begin{array}{l}
i\eps^2 \partial_{t} \psi^\eps_{n} (t,x)- \varrho_{n}(\eps D_x) \psi^\eps_{n}(t,x)- \eps^2 V_{\rm ext}(t,x) \psi^\eps_{n}(t,x)= \eps^2 f^\eps_n(t,x),  \vspace{0.2cm}\\
\psi^\eps_{n}|_{t=0}(x)= \psi_{0}^{\eps}(x)
\end{array}\right.
$$
 with~(\ref{eq:fneps}):
 $\|f^\eps_n(t,\cdot)\|_{L^2(\R^d)}\leq C\eps$ for all  $t\in\R,\,\eps>0.$
\end{proposition}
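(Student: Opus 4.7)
The plan is to derive the equation for $\psi^\eps_n$ directly from equation~\eqref{eq:Utilde} (already obtained inside the proof of Lemma~\ref{lem:Uadiab}) and then to estimate the source term via operator-valued semiclassical calculus. Since $\Pi_n(\xi)$ is the spectral projector of $P(\xi)$ onto its $\varrho_n(\xi)$-eigenspace, one has $P(\xi)\Pi_n(\xi)=\varrho_n(\xi)\Pi_n(\xi)$ for every $\xi\in\R^d$; promoted to operator-valued Fourier multipliers on $L^2(\R^d\times\T^d)$, this identity yields $P(\eps D_x)\widetilde U^\eps = \varrho_n(\eps D_x)\widetilde U^\eps$, where $\varrho_n(\eps D_x)$ now denotes the scalar Fourier multiplier acting in the $x$-variable only. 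Equation~\eqref{eq:Utilde} therefore reads
$$i\eps^2 \partial_t \widetilde U^\eps = \varrho_n(\eps D_x)\widetilde U^\eps + \eps^2 V_{\mathrm{ext}}\widetilde U^\eps + \eps^2[\Pi_n(\eps D_x),V_{\mathrm{ext}}]U^\eps.$$

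Next I will apply $L^\eps$ to both sides. The key observation is that $L^\eps$ commutes with $\varrho_n(\eps D_x)$, a consequence of the $2\pi\Z^d$-periodicity of the Bloch energy. Expanding $\widetilde U^\eps(t,x,y)=\sum_k \widetilde U^\eps_k(t,x)\,e^{i2\pi k\cdot y}$ and computing through the Fourier transform in $x$, the change of variable $\xi=\eta+2\pi k/\eps$ combined with the identity $\varrho_n(\eps\eta + 2\pi k) = \varrho_n(\eps\eta)$ gives
$$\varrho_n(\eps D_x)\bigl[\widetilde U^\eps_k(x)\,e^{i2\pi k\cdot x/\eps}\bigr] = e^{i2\pi k\cdot x/\eps}\,\bigl[\varrho_n(\eps D_x)\widetilde U^\eps_k\bigr](x),$$
and summing over $k$ produces $\varrho_n(\eps D_x)L^\eps = L^\eps\varrho_n(\eps D_x)$. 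Since $L^\eps$ trivially commutes with multiplication by $V_{\mathrm{ext}}(t,x)$, applying $L^\eps$ to the displayed equation yields equation~\eqref{eq:U_components} with
$$f^\eps_n(t,\cdot) = L^\eps\bigl[\Pi_n(\eps D_x),V_{\mathrm{ext}}(t,\cdot)\bigr]U^\eps(t,\cdot),$$
and the initial condition $\psi^\eps_n|_{t=0} = L^\eps U^\eps_0 = \psi^\eps_0$ is immediate from $\Pi_n(\eps D_x)U^\eps_0 = U^\eps_0$.

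It remains to prove~\eqref{eq:fneps}. Fix $s>d/2$ as in Assumption~\ref{hypothesis:main}~(4). Lemma~\ref{lem:eoscxy} bounds $\|L^\eps G\|_{L^2(\R^d)}$ by $C\|G\|_{L^2(\R^d;H^s(\T^d))}$, so it suffices to show
$$\bigl\|[\Pi_n(\eps D_x),V_{\mathrm{ext}}(t,\cdot)]U^\eps(t,\cdot)\bigr\|_{L^2(\R^d;H^s(\T^d))} \le C\eps,$$
uniformly in $\eps>0$ and locally uniformly in $t$. This is a semiclassical commutator estimate for an operator-valued symbol: the Cauchy integral representation of $\Pi_n$ recalled before Lemma~\ref{boundednessdPi}, combined with the ellipticity of $P(\xi)$ on $\T^d$, shows that $\xi\mapsto\Pi_n(\xi)$ and each of its $\xi$-derivatives are uniformly bounded as operators on $H^s(\T^d)$. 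Together with the boundedness of $V_{\mathrm{ext}}$ and its $x$-derivatives, the operator-valued symbolic calculus then produces a principal symbol of order $\eps$, namely $\tfrac{\eps}{i}\nabla_\xi\Pi_n(\xi)\cdot\nabla_x V_{\mathrm{ext}}(t,x)$, and a Calderón--Vaillancourt-type estimate yields the desired $O(\eps)$ bound on $L^2(\R^d;H^s(\T^d))$. Applying this to $U^\eps(t,\cdot)$, which is uniformly bounded in $H^s_\eps(\R^d\times\T^d)\hookrightarrow L^2(\R^d;H^s(\T^d))$ by Lemma~\ref{lem:wp}, concludes the proof.

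The main obstacle is this last step: upgrading the scalar semiclassical symbolic calculus to the operator-valued setting in the mixed space $L^2(\R^d;H^s(\T^d))$, in particular controlling the $\mathcal{L}(H^s(\T^d))$-norm of $\partial_\xi^\alpha\Pi_n(\xi)$ uniformly in $\xi$, which is precisely where the structure of $P(\xi)$ as a uniformly elliptic self-adjoint operator on $\T^d$ enters in an essential way.
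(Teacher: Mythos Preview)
Your proof is correct and follows the same overall strategy as the paper: derive the equation for $\psi^\eps_n$ from the equation for $\widetilde U^\eps$, identify the source term as $f^\eps_n = L^\eps[\Pi_n(\eps D_x),V_{\rm ext}]U^\eps$, and then bound it via Lemma~\ref{lem:eoscxy} together with the $O(\eps)$ commutator estimate in $L^2(\R^d;H^s(\T^d))$ and the a~priori bound on $U^\eps$ from Lemma~\ref{lem:wp}.

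The one noticeable difference is in how the equation is obtained. The paper decomposes $\widetilde U^\eps$ along an orthonormal basis $(\varphi_j(\cdot,\xi))_{j\in J}$ of $\operatorname{Ran}\Pi_n(\xi)$, derives scalar equations for the coefficients $u^\eps_j$, and then uses the commutation $[L^\eps\varphi_j(\cdot,\eps D_x),\varrho_n(\eps D_x)]=0$ (from the $2\pi\Z^d$-periodicity of $\varrho_n$) before summing over $j$. You bypass this decomposition entirely: you use the spectral identity $P(\xi)\Pi_n(\xi)=\varrho_n(\xi)\Pi_n(\xi)$ to replace $P(\eps D_x)\widetilde U^\eps$ by $\varrho_n(\eps D_x)\widetilde U^\eps$ directly, and then invoke the commutation $[L^\eps,\varrho_n(\eps D_x)]=0$ (same periodicity argument) once. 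This is a legitimate and slightly cleaner shortcut; the eigenfunction basis is not actually needed here since only the eigenvalue $\varrho_n$ enters the final equation. The estimate on $f^\eps_n$ is then handled identically in both proofs.
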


\begin{proof} Let us first prove that $\psi_n^\eps$ solves~(\ref{eq:U_components}). We denote by $J$ the set of the indexes of the Bloch eigenfunctions $\varphi_j(\cdot,\xi)$ which form an orthonormal basis of ${\rm Ran}\, \Pi_n(\xi)$. 
Define for $j\in J$,
 $$u^\eps_j(t,x):=\int_{\T^d} \overline \varphi_j(y,\eps D_x) \widetilde U^\eps(t,x,y) dy,$$
and notice that:
\[
\psi^\eps_n(t,x)=(L^\eps \widetilde U^\eps)(t,x)=\sum_{j\in J}  \varphi_j\left({x\over\eps},\eps D_x\right) u^\eps_j(t,x).
\] 
Since $\widetilde U^\eps$ solves \eqref{eq:Utilde} and $P(\xi) \varphi_j(\cdot,\xi) = \varrho_n(\xi) \varphi_j(\cdot,\xi)$ for all $\xi\in\R^d$, 
the family~$u^\eps_j$ solves:
$$i\eps^2\partial_t u^\eps_j (t,x)= \varrho_n(\eps D_x) u^\eps_j(t,x)+ \eps ^2 V_{\rm ext}(t,x) u^\eps_j(t,x)+\eps^2 g_j^\eps(t,x),$$
where:
$$g^\eps_j(t,x):=\int_{\T^d}[\overline{\varphi_j}(y,\eps D_x),V_{\rm ext}(t,x)]U^\eps (t,x,y)dy.$$
Since $\varrho_n(\xi)$ is $2\pi\Z^d$-periodic, it is easy to check that:
$$[L^\eps\varphi_j(\cdot,\eps D_x),\varrho_n(\eps D_x)]=0.$$
Summing the relations over $j\in J$, this implies~(\ref{eq:U_components}) 
with 
$
f^\eps_n  = L^\eps [\Pi_n(\eps D_x), V_{\rm ext}  ] U^\eps.
$
Now, Lemma \ref{lem:eoscxy} and the symbolic calculus of pseudodifferential operators gives, for any $t\in\R$:
$$\|  f^\eps_n(t,\cdot)\|_{L^2(\R^d)} \leq C\, \left\| [\Pi_n(\eps D_x), V_{\rm ext} (t,\cdot) ] U^\eps(t,\cdot)\right\|_{L^2(\R^d;H^s(\T^d))}\leq C'\eps \|U^\eps(t,\cdot)\|_{L^2(\R^d;H^s(\T^d))},$$
which concludes the proof.
\end{proof}

\subsection{Proofs of Theorems~\ref{mainresult} }

The proof of Theorem~\ref{mainresult} (which implies Corollary~\ref{mainresul:case1}) easily follows from our results so far.

\begin{proof} By Corollary~\ref{cor:psiepsosc}, the family $(\psi^\eps(t,\cdot))$ is $\eps$-oscillating. Therefore, the weak limits of $|\psi^\eps(t,x)|^2dx$ are the projection on $\R^d_x$ of the Wigner measures associated with $(\psi^\eps(t,\cdot))$. By Corollary~\ref{prop:adiab1}, the Wigner measures of $(\psi^\eps(t,\cdot))$ coincide with those of $(\psi^\eps_n(t,\cdot))$. Finally, 
Proposition~\ref{lem:decompsolution}
allows us to use the results of Theorem~\ref{theo:disc} for determining the Wigner measure of $(\psi^\eps_n(t,\cdot))$. 
\end{proof}

\subsection{Some comments on initial data that are a finite superposition of Bloch modes}\label{sec:superposition}
Our results also apply to initial data that are a finite linear combination of the form:
\begin{equation}\label{eq:datasuperposition}
\psi^\eps_0=\sum_{n\in{\mathcal N}} L^\eps U^\eps_{0,n} 
\end{equation}
with ${\mathcal N}$ a finite subset of $\N$ such that for all $n\in{\mathcal N}$, 
$P(\eps D_x) U^\eps_{0,n}= \varrho_n(\eps D_x) U^\eps_{0,n},$
for distinct $\varrho_n$ of constant multiplicity and $U^\eps_{0,n}$ uniformly bounded in $H^s_{\eps}(\R^d\times \T^d)$ for all $n\in{\mathcal N}$. 

\begin{proposition}\label{theo:superposition}
Assume we turn $(3)$ into~(\ref{eq:datasuperposition}) in the hypothesis of Assumption~\ref{hypothesis:main}  and that item (2) of Assumption~\ref{hypothesis:main} holds for every $\varrho_n$ with $n\in{\mathcal N}$.  Then, 
there exist a subsequence $(\eps_k)_{k\in\N}$, positive measures $\nu_n\in\mathcal{M}_+(T^*\Lambda_n)$, and  measurable families of self-adjoint, positive, trace-class operators
$$M_{0,n}:T^*_\xi\Lambda_n\ni (v,\xi)\longmapsto M_{0,n}(v,\xi)\in \mathcal{L}_+^1(L^2(N_\xi\Lambda_n)),\quad \Tr_{L^2(N_\xi\Lambda_n)} M_{0,n}(v,\xi)=1,$$ 
such that for
for every $a<b$ and $\phi\in \mathcal{C}_c(\R^d)$ one has:
$$\displaylines{
\lim_{k\to\infty}\int_a^b\int_{\R^d}\phi(x)|\psi^{\eps_k}(t,x)|^2dxdt\hfill\cr\hfill =\sum_{n\in{\mathcal N}}\int_a^b \int_{T^*\Lambda_n}\Tr_{L^2(N_\xi\Lambda_n)}\left[m_\phi(v,\xi)M_n(t,v,\xi)\right]\nu_j(dv,d\xi)dt,
\cr}$$
where $M_n(\cdot, v,\xi)\in \mathcal{C}(\R;\mathcal{L}_+^1(L^2(N_\xi\Lambda_n))$ solves the Heisenberg equation~(\ref{eq:heis}) with initial data $M_{0,n}$ associated with the concentration of $\psi^\eps_0$ on $\Lambda_n$.
\end{proposition}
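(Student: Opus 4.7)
The plan is to exploit the linearity of \eqref{eq:schro} to decouple the contribution of each Bloch mode in the initial datum, to apply Theorem~\ref{mainresult} to each single-mode component, and to show that the cross terms between distinct modes vanish in the time-integrated weak limit thanks to the orthogonality of the Bloch projectors and the uniform spectral gap between distinct modes.

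By linearity of \eqref{eq:schro}, decompose $\psi^\eps(t,\cdot)=\sum_{n\in\mathcal N}\tilde\psi^\eps_n(t,\cdot)$ where $\tilde\psi^\eps_n$ is the solution of \eqref{eq:schro} with initial datum $L^\eps U^\eps_{0,n}$. Applying Corollary~\ref{prop:adiab1} to each term yields $\sup_{t\in[0,T]}\|\tilde\psi^\eps_n(t,\cdot)-\psi^\eps_n(t,\cdot)\|_{L^2(\R^d)}=O_T(\eps)$, where $\psi^\eps_n$ solves the Bloch-component equation \eqref{eq:U_components} with symbol $\varrho_n$ and the same initial datum. Expanding
\[
|\psi^\eps(t,x)|^2 = \sum_{n,m\in\mathcal N}\psi^\eps_n(t,x)\overline{\psi^\eps_m(t,x)}+O(\eps),
\]
testing against $\phi(x)\,dx\,dt$ and using that $\mathcal N$ is finite, the diagonal terms ($n=m$) produce, after applying Theorem~\ref{mainresult} to each $\psi^\eps_n$ separately and extracting a single common subsequence $(\eps_k)$, precisely the right-hand side of the statement, with $\nu_n$ and $M_{0,n}$ being those prescribed by Theorem~\ref{mainresult} applied to the sequence $(U^\eps_{0,n})$.

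The main obstacle is to show that for $n\neq m$
\[
\int_a^b\int_{\R^d}\phi(x)\,\psi^\eps_n(t,x)\overline{\psi^\eps_m(t,x)}\,dx\,dt \longrightarrow 0.
\]
Two structural facts underlie this: the fibrewise orthogonality of the Bloch projectors $\Pi_n(\eta)\Pi_m(\eta)=0$, and the fact that the constant-multiplicity hypothesis on both $\varrho_n$ and $\varrho_m$ forbids them from crossing---indeed, a coincidence $\varrho_n(\xi_0)=\varrho_m(\xi_0)$ would make the multiplicity of the common eigenvalue at $\xi_0$ strictly larger than the constant value assumed for $\varrho_n$---so that by continuity and $2\pi\Z^d$-periodicity one has a uniform spectral gap $|\varrho_n(\xi)-\varrho_m(\xi)|\geq c>0$. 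Writing $\psi^\eps_n=L^\eps V^\eps_n$ with $V^\eps_n=\Pi_n(\eps D_x)V^\eps_n+O(\eps)$ (Lemma~\ref{lem:Uadiab}) and using the Fourier series $V^\eps_n(x,y)=\sum_{k\in\Z^d}V^\eps_{n,k}(x)e^{2\pi ik\cdot y}$, the pairing $(\phi\psi^\eps_n,\psi^\eps_m)_{L^2(\R^d)}$ decomposes into a \emph{resonant} part equal to $(M_\phi V^\eps_n,V^\eps_m)_{L^2(\R^d\times\T^d)}$, which by $\Pi_n(\eta)\Pi_m(\eta)=0$ becomes $([\Pi_m(\eps D_x),M_\phi]\Pi_n(\eps D_x)V^\eps_n,V^\eps_m)$ and is $O(\eps)$ by the semiclassical commutator estimate (using smoothness of $\Pi_m$, Lemma~\ref{boundednessdPi}), and a \emph{non-resonant} part whose vanishing is obtained after the time integration on $[a,b]$ by a non-stationary phase argument: the spectral gap ensures that the mixed symbol $\varrho_n-\varrho_m$ driving the cross Wigner distribution is bounded below, so an integration by parts in $t$ produces a boundary contribution of order $O(\eps^2)$ modulo controlled commutator remainders.

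Combining the diagonal contributions given by Theorem~\ref{mainresult} with the vanishing of the cross terms, and extracting a single subsequence that works for all $n\in\mathcal N$ (possible because $\mathcal N$ is finite), yields the stated identity. The most delicate step is the non-resonant cross-term analysis, where one must combine the smoothness of the Bloch projectors (granted by the constant-multiplicity hypothesis, via Lemma~\ref{boundednessdPi}) with the uniform spectral gap between distinct Bloch modes in order to convert the fibrewise orthogonality $\Pi_n\Pi_m=0$ into genuine vanishing of the time-integrated cross correlations, rather than mere $O(1)$ boundedness.
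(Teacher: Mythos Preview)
Your overall strategy---decompose by linearity into single-mode pieces, apply Theorem~\ref{mainresult} to each, and kill the cross terms using the uniform spectral gap $|\varrho_n-\varrho_m|\geq c>0$---is correct and matches the paper. However, your cross-term analysis is considerably more elaborate than necessary. The paper works directly at the level of cross Wigner distributions: for $a\in\mathcal C^\infty_c(\R^{2d})$ one sets $\tilde a:=(\varrho_n-\varrho_{n'})^{-1}a$, which is smooth thanks to the gap, and then symbolic calculus plus the equations~\eqref{eq:U_components} give
\[
\bigl(\op_\eps(a)\psi^\eps_n,\psi^\eps_{n'}\bigr)
= i\eps^2\frac{d}{dt}\bigl(\op_\eps(\tilde a)\psi^\eps_n,\psi^\eps_{n'}\bigr)+O(\eps),
\]
so that after integrating against $\theta(t)$ the whole cross term is $O(\eps)$. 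Passing from symbols $a$ to multiplication by $\phi(x)$ is then the standard $\eps$-oscillation argument. By contrast, you lift back to $L^2(\R^d\times\T^d)$ and split into a ``resonant'' part (handled via the fibrewise orthogonality $\Pi_n\Pi_m=0$ and a commutator estimate) and a ``non-resonant'' part. The resonant argument is valid but redundant: the spectral-gap trick already disposes of the \emph{entire} cross Wigner pairing at once, so invoking $\Pi_n\Pi_m=0$ buys nothing extra. Your treatment of the non-resonant part is also slightly muddled: those terms are characterised by spatial oscillation $e^{2\pi i(k-k')\cdot x/\eps}$, yet you invoke the time-integration argument driven by $\varrho_n-\varrho_m$, which is really the argument for the full cross term rather than for this specific piece. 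None of this is fatal---a correct proof can be extracted---but the paper's single-line trick $\tilde a=(\varrho_n-\varrho_{n'})^{-1}a$ is both shorter and cleaner, and avoids the detour through $L^2(\R^d\times\T^d)$ entirely.
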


\begin{proof} 
We  associate to any $n\in \N$ their respective Bloch components $\psi^\eps_n(t,\cdot)$ of $\psi^\eps (t,\cdot)$ as we previously did. We juste have to prove that
for all $n,n'\in{\mathcal N}$, $n\not=n'$,
$$\int_\R\theta(t) \left({\rm op}_\eps(a) \psi^\eps_n(t,\cdot),\psi^\eps_{n'}(t,\cdot)\right)\Tend{\eps}{0} 0,$$
which implies that 
 the Wigner measure of $\sum_{n\in{\mathcal N}}\psi^\eps_n$ is the sum of the Wigner measures of the~$\psi^\eps_n$. 
We take 
$a\in{\mathcal C}_c^\infty (\R^{2d})$  and  $\tilde{a} = (\varrho_n-\varrho_{n'})^{-1} a \in {\mathcal C}_c^\infty\left(\R^{2d}\right)$; then, for all $t\in\R$,
\begin{align*}
\left({\rm op}_\eps(a) \psi^\eps_n(t,\cdot),\psi^\eps_{n'}(t,\cdot)\right) &= \left({\rm op}_\eps(\tilde{a})\varrho_n(\eps D_x) \psi^\eps_n(t,\cdot),\psi^\eps_{n'}(t,\cdot)\right) \\&- \left({\rm op}_\eps(\tilde{a}) \psi^\eps_n(t,\cdot),\varrho_{n'}(\eps D_x)\psi ^\eps_{n'}(t,\cdot)\right) + O(\eps)
\end{align*}
from which we deduce:
$$\left({\rm op}_\eps(a) \psi^\eps_n(t,\cdot),\psi^\eps_{n'}(t,\cdot)\right)= i\eps^2 {d\over dt} \left({\rm op}_\eps(\tilde{a}) \psi^\eps_n(t,\cdot),\psi^\eps_{n'}(t,\cdot)\right) +O(\eps).$$
Therefore, if $\theta\in {\mathcal C}_c^\infty(\R)$,
then 
$$\displaylines{
 \int_\R \theta(t) \left({\rm op}_\eps(a) \psi^\eps_n(t,\cdot),\psi^\eps_{n'}(t,\cdot)\right)dt =  O(\eps ) + i\eps^2 \int _\R \theta(t) {d\over dt} \left({\rm op}_\eps(\tilde{a}) \psi^\eps_n(t,\cdot),\psi^\eps_{n'}(t,\cdot)\right) dt\hfill\cr\hfill
= O(\eps ) - i\eps^2 \int _\R \theta'(t) \left({\rm op}_\eps(\tilde{a})\psi^\eps_n(t,\cdot),\psi^\eps_{n'}(t,\cdot)\right)dt = O(\eps).\qquad \cr}$$
\end{proof}


\appendix 
\section{Semiclassical pseudodifferential operators}\label{sec:pdo}

In this appendix we recall a few basic notions on the theory of pseudodifferential operators that we use trough this article. The reader can consult the references \cite{AlinhacGerard, DimassiSjostrand, F:14, MartinezBook, Zwobook} for additional background and for proofs of the results that follow.

\medskip 

Recall that given a function $a\in\mathcal{C}^\infty(\R^d\times\R^d)$ that is bounded together with its derivatives
 (we denote the space of all such functions by ${S}$), one defines the semiclassical pseudodifferential operator of symbol $a$ obtained through the Weyl quantization rule to be the operator $\op_\eps(a)$ that acts on functions $f\in\mathcal{S}(\R^d)$ by: 
$$\op_\eps(a) f(x)=\int_{\R^{d}\times\R^d} a\left(\frac{x+y}{2},\eps\xi\right) {\rm e}^{i \xi\cdot (x-y)} f(y)dy\,\frac{d\xi}{(2\pi)^d}.$$  
These operators are bounded in $L^2(\R^d)$. The Calderón-Vaillancourt theorem \cite{CV} ensures the existence of a constant $C_d>0$ such that for every $ a\in S$ one has
\begin{equation}\label{est:pseudo}
\| \op_\eps(a)\|_{{\mathcal L}(L^2(\R^d))}\leq C_d\, 
N(a),
\end{equation}
where
$$
N_d(a):=\sum_{\alpha\in\N^{2d},|\alpha|\leq J_0} \sup_{\R^d\times\R^d}|\partial_{x,\xi}^\alpha a|
$$
for some $J_0\in\N$ depending only on~$d$. 
We make use repeatedly of the following result, known as the symbolic calculus for pseudodifferential operators.

\begin{proposition}\label{prop:symbol}
Let $a,b\in S$, then
$$\op_\eps(a)\op_\eps(b)  = \op_\eps(ab)+\frac{\eps}{2i} \op_\eps(\{a,b\})+\eps^2 R^{(2)}_\eps,$$
with $\{a,b\}=\nabla_\xi a \cdot \nabla _x b-\nabla _xa\cdot \nabla_\xi b$ and 
$$\left[\op_\eps(a),\op_\eps(b)\right] ={\eps\over i}\op_\eps(\{a,b\}) +\eps^3 R_\eps^{(3)},$$
$$\| R^{(j)}_\eps\|_{{\mathcal L}(L^2(\R^d))}\leq C \,\sup_{|\alpha|+|\beta|=j} N_d(\partial_\xi^\alpha \partial_x^{\beta}  a) N_d(
 \partial_\xi^\beta \partial_x^{\alpha}  b),\quad j=1,2,$$
for some constant $C>0$ independent of $a$, $b$ and $\eps$.
\end{proposition}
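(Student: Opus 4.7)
The plan is to derive both expansions from the exact Weyl composition formula and then apply the Calderón--Vaillancourt estimate \eqref{est:pseudo} to the resulting remainder symbols. First, I would recall that for $a,b\in S$ the composition of two Weyl operators is again a Weyl operator, $\op_\eps(a)\op_\eps(b) = \op_\eps(a\#_\eps b)$, with Moyal star product represented by the oscillatory integral
$$(a\#_\eps b)(X) = \frac{1}{(\pi\eps)^{2d}}\int_{\R^{2d}\times\R^{2d}} e^{\frac{2i}{\eps}\sigma(Y-X,\,Z-X)}\, a(Y)\,b(Z)\, dY\, dZ,$$
where $X=(x,\xi)$ and $\sigma$ is the canonical symplectic form on $T^*\R^d$. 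This identity follows by writing the Schwartz kernel of $\op_\eps(a)\op_\eps(b)$ explicitly and performing an affine change of variables; equivalently, one has the formal expression $a\#_\eps b = \exp(\tfrac{i\eps}{2}\sigma(D_Y,D_Z))\,a(Y)b(Z)\big|_{Y=Z=X}$.

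Next I would Taylor expand the exponential to the relevant order with integral remainder. At order one this yields
$$a\#_\eps b = ab + \frac{\eps}{2i}\{a,b\} + \eps^2\, r^{(2)}_\eps(a,b),$$
where $r^{(2)}_\eps$ is an explicit integral over $t\in[0,1]$ of bilinear expressions in the derivatives $\partial_\xi^\alpha\partial_x^\beta a$ and $\partial_\xi^\beta\partial_x^\alpha b$ with $|\alpha|+|\beta|=2$, obtained by differentiating under the integral. For the commutator the key structural observation is that swapping $a\leftrightarrow b$ inside the Moyal kernel is equivalent to sending $\eps\mapsto -\eps$ (by exchanging $Y$ and $Z$ and using the antisymmetry of $\sigma$), so $a\#_\eps b - b\#_\eps a$ contains only odd powers of $\eps$ in its expansion. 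The symmetric $\eps^2$ term therefore cancels and one gets $[\op_\eps(a),\op_\eps(b)] = \tfrac{\eps}{i}\op_\eps(\{a,b\}) + \eps^3 R^{(3)}_\eps$ after a further Taylor expansion of the remaining antisymmetric part.

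It then remains to bound the remainders in operator norm. The integral form of the Taylor remainder exhibits each $r^{(j)}_\eps$ as a symbol in $S$ whose seminorms $N_d(\cdot)$ are uniformly controlled in $\eps$ by finite sums of products of the form $N_d(\partial_\xi^\alpha\partial_x^\beta a)\, N_d(\partial_\xi^\beta\partial_x^\alpha b)$ with $|\alpha|+|\beta|$ equal to the relevant order. Applying the Calderón--Vaillancourt bound \eqref{est:pseudo} to $\op_\eps(r^{(j)}_\eps)$ yields the announced estimates. The main technical obstacle is to justify these manipulations when $a,b$ only belong to $S$: the oscillatory integral defining $a\#_\eps b$ is not absolutely convergent for such symbols, so differentiation under the integral must be carried out through repeated integration by parts against the quadratic phase $\sigma(Y-X,Z-X)$ to trade oscillation for decay, or alternatively one first establishes the expansion for $a,b\in\mathcal{S}(\R^{2d})$ by direct Fourier computation and extends by density and continuity in the natural topology of $S$. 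Either route preserves the structure of the remainder as $\op_\eps$ applied to a symbol with seminorm bounds of the required bilinear form, completing the proof.
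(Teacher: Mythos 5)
Your proof is correct and coincides with the standard argument: the paper does not prove Proposition~\ref{prop:symbol} at all, deferring to the references cited in Appendix~\ref{sec:pdo} (\cite{AlinhacGerard,DimassiSjostrand,MartinezBook,Zwobook}), and those references proceed exactly as you do -- exact Weyl/Moyal composition, Taylor expansion of the twisted exponential with integral remainder, the $\eps\mapsto-\eps$ parity argument cancelling the $\eps^2$ term in the commutator, and the Calder\'on--Vaillancourt bound \eqref{est:pseudo} applied to the remainder symbol. Two minor points: your estimates naturally come out with $|\alpha|+|\beta|=2$ for $R^{(2)}_\eps$ and $|\alpha|+|\beta|=3$ for $R^{(3)}_\eps$, which is the intended reading of the paper's ``$j=1,2$'' (a typo), and the ``density'' alternative should be understood as bounded approximation of symbols in $S$ together with convergence of the associated quadratic forms (Schwartz functions are not dense in the Fr\'echet topology of $S$), so the integration-by-parts regularization of the oscillatory integral that you also propose is the cleaner route.
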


\section{Trace operator-valued measures}\label{sec:ovm}
In this appendix we recall general considerations on operator-valued measures. 
Let $X$ be a complete metric space and $(Y,\sigma)$ a measure space; write $\mathcal{H}:=L^2(Y,\sigma)$ and denote by $\mathcal{L}^1(\mathcal{H})$, $\mathcal{K}(\mathcal{H})$ and $\mathcal{L}(\mathcal{H})$ the spaces of trace-class, compact and bounded operators on $\mathcal{H}$ respectively. A trace-operator valued Radon measure on $X$ is a linear functional:
$$M:\mathcal{C}_0(X)\To \mathcal{L}^1(\mathcal{H})$$
satisfying the following boundedness condition. For every compact $K\subset X$ there exist a constant $C_K>0$ such that:
$$\Tr|M(\phi)|\leq C_K \sup_K|\phi|,\quad \forall \phi\in\mathcal{C}_0(K).$$
Such an operator-valued measure is positive if for every $\phi\geq0$, $M(\phi)$ is an Hermitian positive operator.
Let $M$ be a positive trace operator-valued measure on $X$, denote by $\nu\in\mathcal{M}_+(X)$ the positive real measure defined by:
$$\int_X\phi(x)\nu(dx)=\Tr M(\phi),\quad \forall \phi\in\mathcal{C}_0(X).$$
The Radon-Nikodym theorem for operator valued measures (see, for instance, the appendix in \cite{GerardMDM91}) ensures the existence of a $\nu$-locally integrable function:
$$Q:X\longmapsto \mathcal{L}^1(\mathcal{H}),\quad \Tr Q(x)=1, \quad Q(x) \text{ positive Hermitian for } \nu \text{-a.e.} x\in X,$$ 
such that: 
$$M(\phi)=\int_X \phi(x) Q(x)\nu(dx),\quad \forall \phi\in\mathcal{C}_0(X).$$
Note that this formula implies that $M$ can be identified to a positive element of the dual of $\mathcal{C}_0(X;\mathcal{K}(\mathcal{H}))$ via:
$$\langle M, T \rangle \equiv \int_X \Tr[T(x)M(dx)]:=\int_X \Tr(T(x)Q(x))\nu(dx),\quad T\in \mathcal{C}_0(X;\mathcal{K}(\mathcal{H})).$$ 
It can be also shown that every such positive functional arises in this way.
Let  $(e_j(x))_{j\in\N}$ denote an orthonormal basis of $\mathcal{H}$ consisting of eigenfunctions of $Q(x)$:
$$Q(x)e_j(x)=\varrho_j(x) e_j(x),\quad \sum_{j=1}^\infty \varrho_j(x)=1,\quad \nu\text{-a.e.}.$$ 
Clearly, both $\varrho_j$ and $e_j$, $j\in\N$, are locally $\nu$-integrable and
$$Q(x)=\sum_{j=1}^\infty \varrho_j(x)|e_j(x)\rangle \langle e_j(x)|,\quad \nu\text{-a.e.},$$
where, as usual, $|e_j(x)\rangle \langle e_j(x)|$ denotes the orthogonal projection in $\mathcal{H}$ onto $e_j(x)$. Moreover, as a consequence of the monotone convergence theorem, the following result easily follows.
\begin{lemma}\label{lem:abs}
Let $M$ be a positive trace operator-valued measure on $X$. Then there exist  a non-negative function $\rho\in L^1_{\loc}(X,\nu;L^1(Y,\sigma))$ such that, for every $a\in \mathcal{C}_0(X; L^\infty(Y,\sigma))$ one has:
$$\int_X \Tr[m_a(x) M(dx)] = \int_X \int_Y a(x,y) \rho(x,y)\sigma(dy) \nu(dx),$$
where $m_a(x)$ denotes the operator acting on $\mathcal{H}$ by multiplication by $a(x,\cdot)$. The density $\rho$ is given by:
$$\rho(x,y)=\sum_{j=1}^\infty \varrho_j(x)|e_j(x,y)|^2.$$
\end{lemma}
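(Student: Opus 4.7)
The plan is to build the identity from the Radon–Nikodym representation $M(dx) = Q(x)\nu(dx)$ recalled just above the statement, together with the spectral decomposition $Q(x)=\sum_{j=1}^{\infty}\varrho_j(x)\lvert e_j(x)\rangle\langle e_j(x)\rvert$. Fix $x\in X$ (outside a $\nu$-null set on which the spectral decomposition fails) and any $a\in\mathcal{C}_0(X;L^\infty(Y,\sigma))$. Since $m_{a(x)}\in\mathcal{L}(\mathcal{H})$ with $\|m_{a(x)}\|\le\|a(x,\cdot)\|_{L^\infty}$ and $Q(x)\in\mathcal{L}^1(\mathcal{H})$, the operator $m_{a(x)}Q(x)$ is trace class, and I can compute its trace in the orthonormal eigenbasis $(e_j(x))_{j\in\mathbb{N}}$:
\[
\Tr\bigl[m_{a(x)}Q(x)\bigr]=\sum_{j=1}^{\infty}\bigl\langle m_{a(x)}Q(x)e_j(x),e_j(x)\bigr\rangle_{\mathcal{H}}=\sum_{j=1}^{\infty}\varrho_j(x)\int_Y a(x,y)\lvert e_j(x,y)\rvert^{2}\sigma(dy).
\]

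Next, I would interchange the series and the integral over $Y$. The series of absolute values is dominated by
\[
\sum_{j=1}^{\infty}\varrho_j(x)\lvert a(x,y)\rvert\,\lvert e_j(x,y)\rvert^{2}\le\|a(x,\cdot)\|_{L^\infty(Y,\sigma)}\,\rho(x,y),
\]
where $\rho(x,y):=\sum_{j}\varrho_j(x)\lvert e_j(x,y)\rvert^{2}\ge 0$ is measurable (as a monotone limit of measurable partial sums, once a jointly measurable selection of the $e_j(\cdot,\cdot)$ has been fixed, see below). By Tonelli applied to the nonnegative summands, $\rho$ is a well-defined $[0,\infty]$-valued measurable function of $(x,y)$, and
\[
\int_Y\rho(x,y)\,\sigma(dy)=\sum_{j=1}^\infty\varrho_j(x)\,\|e_j(x)\|_{\mathcal H}^{2}=\sum_{j=1}^\infty\varrho_j(x)=\Tr Q(x)=1\quad\text{for }\nu\text{-a.e. }x,
\]
so in particular $\rho(x,\cdot)\in L^1(Y,\sigma)$ $\nu$-a.e., and $x\mapsto\|\rho(x,\cdot)\|_{L^1(Y,\sigma)}\le 1$ locally, giving $\rho\in L^1_{\loc}(X,\nu;L^1(Y,\sigma))$. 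With this domination, Fubini–Lebesgue legitimizes swapping $\sum_j$ and $\int_Y$ for the signed (or complex) integrand $a(x,y)\rho_j(x,y)$, yielding
\[
\Tr\bigl[m_{a(x)}Q(x)\bigr]=\int_Y a(x,y)\,\rho(x,y)\,\sigma(dy).
\]
Finally, I integrate this pointwise identity against $\nu$ on $X$; the left-hand side is by definition $\int_X\Tr[m_a(x)M(dx)]$, and on the right another application of Fubini (valid since $\lvert a\rvert\rho\in L^1_{\loc}(X\times Y,\nu\otimes\sigma)$ by the previous bound) gives the announced formula.

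The one delicate point, which I would address explicitly, is the joint $(x,y)$-measurability of the eigenfunctions $e_j(x,y)$: a priori $x\mapsto e_j(x)\in L^2(Y,\sigma)$ is only defined up to a sign/phase and a null set, while $\rho(x,y)$ must be a bona fide measurable function on $X\times Y$. This is handled by a standard measurable-selection argument for the spectral decomposition of the locally $\nu$-integrable trace-class-valued map $x\mapsto Q(x)$: one orders the eigenvalues $\varrho_1(x)\ge\varrho_2(x)\ge\cdots$ (they are $\nu$-measurable by continuity of the functional calculus on compact operators) and, on the measurable sets where gaps do not collapse, chooses unit eigenvectors $e_j(x)$ by spectral projectors $P_j(x)=\frac{1}{2\pi i}\oint_{\Gamma_j(x)}(z-Q(x))^{-1}dz$ followed by Gram–Schmidt in the (at most finite-dimensional) degenerate eigenspaces; this produces the required jointly measurable representatives. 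Everything above then goes through unchanged, and the nonnegativity of $\rho$ follows tautologically from its definition.
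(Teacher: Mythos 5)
Your proof is correct and follows essentially the same route as the paper: the Radon--Nikodym representation $M(dx)=Q(x)\nu(dx)$, the spectral decomposition $Q(x)=\sum_j\varrho_j(x)|e_j(x)\rangle\langle e_j(x)|$, and a monotone/dominated convergence argument to exchange the sum with the integrals, which is exactly what the paper summarizes by ``as a consequence of the monotone convergence theorem, the result easily follows.'' Your extra care about joint measurability of the $e_j(x,y)$ is a welcome refinement of a point the paper passes over with ``clearly,'' but it does not change the substance of the argument.
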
 


\section{Proof of~Lemma~\ref{lem:geometry}}\label{sec:app_proof}

We denote by ${\mathcal F}_\eps$ the semi-classical Fourier transform defined for $f\in L^2(\R^d)$ by
$${\mathcal F}_\eps f(\xi)=(2\pi\eps)^{-d/2} \widehat f\left({\xi\over\eps}\right)$$
and we observe that for $a\in{\mathcal C}_c^\infty(\R^{3d})$, 
$$\left(\op_\eps(a_\eps)f\;,\;f\right)=(2\pi\eps)^{-d} \int_{\R^{3d}} a_\eps\left(-x,{\xi+\xi'\over 2}\right) {\rm e}^{{i\over \eps} x\cdot (\xi-\xi')}{\mathcal F}_\eps f(\xi')\overline{{\mathcal F}_\eps f}(\xi) d\xi\,d\xi'\,dx,$$
where $a_\eps$ is associated with $a$ according to~(\ref{def:aeps}). 
 We consider a smooth cut-off function $\chi$ which is equal to $1$ on the support of $a$  so that we have
 $a(x,\xi) \chi(\xi)=a(x,\xi)$ and we write  
$$\displaylines{ \left(\op_\eps(a_\eps)f\;,\;f\right) =\hfill \cr
(2\pi\eps)^{-d} \int_{\R^{3d}} a_\eps\left(-x,{\xi+\xi'\over 2}\right) {\rm e}^{{i\over \eps} x\cdot (\xi-\xi')}{\mathcal F}_\eps f(\xi')\overline{{\mathcal F}_\eps f}(\xi) \chi(\xi)\chi(\xi')d\xi\,d\xi'\,dx + O(\eps).
\cr}$$ 
The rest term $O(\eps)$ comes from Taylor formula close to ${\xi+\xi'\over 2}$, the observation that 
$$(\xi_j-\xi'_j) {\rm e}^{{i\over \eps} x\cdot (\xi-\xi')}={ \eps\over i} \partial_{x_j} \left( {\rm e}^{{i\over \eps} x\cdot (\xi-\xi')}\right),\;\;1\leq j\leq d,$$
and the use of integration by parts {in $x$}. Similarly, we just need  to consider vectors~$(\xi,\xi')$ which are close to the diagonal and if we introduce a smooth function~$\Theta$ compactly supported on $|\xi|\leq 1$ and equal to $1$ close to $0$, then for some $\delta>0$ (that will be chosen small enough later), we have
$$\displaylines{ \left(\op_\eps(a_\eps)f\;,\;f\right) =
(2\pi\eps)^{-d} \int_{\R^{3d}} a_\eps\left(-x,{\xi+\xi'\over 2}\right) \hfill \cr\hfill
\times\, {\rm e}^{{i\over \eps} x\cdot (\xi-\xi')}{\mathcal F}_\eps f(\xi')\overline{{\mathcal F}_\eps f}(\xi) \Theta\left({\xi-\xi'\over\delta}\right)\chi(\xi)\chi(\xi')d\xi\,d\xi'\,dx + O(\eps).
\cr}$$ 
We are left with the integral 
  \begin{eqnarray*}
I_\eps &=& (2\pi\eps)^{-d} \int_{\R^{3d}} a_\eps\left(-x,{\xi+\xi'\over 2}\right) {\rm e}^{{i\over \eps} x\cdot (\xi-\xi')}{\mathcal F}_\eps f(\xi')\overline{{\mathcal F}_\eps f}(\xi) \chi(\xi)\chi(\xi')\\
& & \qquad \times\, \Theta\left({\xi-\xi'\over\delta}\right)d\xi\,d\xi'\,dx\\
 &=&   (2\pi\eps)^{-d} \int_{\R^{3d}} a_\eps\left(-x,{\Phi(\zeta)+\Phi(\zeta')\over 2}\right) {\rm e}^{{i\over \eps} x\cdot (\Phi(\zeta)-\Phi(\zeta'))}{\mathcal F}_\eps f(\Phi(\zeta'))\\
 &  &\qquad \times \,\overline{{\mathcal F}_\eps f}(\Phi(\zeta)) J_\Phi(\zeta)\,  J_\Phi(\zeta')\, \chi\circ \Phi(\zeta)\, \chi\circ\Phi(\zeta') \Theta\left({\Phi(\zeta)-\Phi(\zeta')\over\delta}\right)d\zeta\,d\zeta'\,dx
 \end{eqnarray*}
 where $\zeta\mapsto J_\Phi(\zeta)$ is the Jacobian of the diffemorphism~$\Phi$. 
 Setting 
 $$\zeta=\theta+\eps {v\over 2}\;\;{\rm and}\;\;{\zeta'}=\theta-\eps {v\over 2},$$
 we have for $t\in\R$,
$$ \Phi(\theta+\eps tv) =\Phi(\theta) +\eps t d\Phi(\theta) v +\eps^2 \int_0^1 d^2\Phi(\theta+\eps tsv) [v,v] (1-s) ds,$$
 whence 
 $$\displaylines{
 {1\over 2} \left( \Phi(\zeta)+ \Phi(\zeta')\right) =\Phi(\theta) +{\eps^2\over 2} B_\eps^+(\theta,v)[v,v],\cr
  \Phi(\zeta)- \Phi(\zeta')=\eps d\Phi(\theta) v +\eps^2 B_\eps^-(\theta,v)[v,v],\cr
 }$$
 with 
 $$ B_\eps^\pm(\theta,v)=\int_0^1 d^2 \left(\Phi \left(\theta+\eps s {v\over 2}\right)\pm\Phi \left(\theta-\eps s {v\over 2}\right)\right)(1-s)ds.$$
 Note that  the functions $B_\eps^\pm$ are smooth, bounded and with bounded derivatives, uniformly in $\eps$,  as soon as the variables $\theta$ and $\eps v$ are in a compact.
We obtain 
$$\displaylines{
I_\eps =
  (2\pi)^{-d} \int_{\R^{3d}} a_\eps\left(-x,\Phi(\theta)+{\eps^2\over 2} B_\eps^+(\theta,v)[v,v]\right) {\rm e}^{i x\cdot (d\Phi(\theta) v +\eps B_\eps^-(\theta,v)[v,v])}
 \hfill\cr\hfill
\times\,  {\mathcal F}_\eps f\left(\Phi\left(\theta -\eps {v\over 2}\right)\right)
 \,\overline{{\mathcal F}_\eps f}\left(\Phi\left(\theta +\eps {v\over 2}\right)\right) J_\Phi \left(\theta +\eps {v\over 2}\right)J_\Phi\left(\theta -\eps {v\over 2}\right)  d\theta\,d\theta'\,dx,
\cr}$$
where we have omitted the localization functions in $\theta+\eps {v\over 2}$ and $\theta-\eps {v\over 2}$, which makes that the integral is compactly supported in $\theta$ and $\eps v$ Moreover, we have  $\eps |v|\leq \delta$ on the domain of integration. We shall crucially use this information later.

\medskip

 The change of variable $x=^t d\Phi(\theta)^{-1} u$ gives 
$$\displaylines{
I_\eps =
  (2\pi)^{-d} \int_{\R^{3d}} a_\eps\left(-\,^t d\Phi(\theta)^{-1} u,\Phi(\theta)+{\eps^2\over 2} B_\eps^+(\theta,v)[v,v]\right)
  \hfill\cr\hfill
\times    {\rm e}^{i u\cdot v +i\eps u\cdot \,^t d\Phi(\theta)^{-1} B_\eps^-(\theta,v)[v,v]} 
 {\mathcal F}_\eps f\left(\Phi\left(\theta -\eps{v\over 2}\right)\right)
\,\overline{{\mathcal F}_\eps f}\left(\Phi\left(\theta +\eps{v\over 2}\right)\right) \cr\hfill
\times \,J_\Phi \left(\theta +\eps{v\over 2}\right)J_\Phi\left(\theta -\eps{v\over 2}\right)J_\Phi^{-1} \left(\theta \right) d\theta\,d\theta'\,du,
 \cr}$$
 with the same property on the domain of integration ($\theta$ in a compact and $\eps |v|<\delta$). 
 Note that 
$$\displaylines{
 a_\eps\left(-\,^t d\Phi(\theta)^{-1} u,\Phi(\theta)+{\eps^2\over 2} B_\eps^+(\theta,v)[v,v]\right)\hfill\cr
  = a\bigg(-\, ^t d\Phi(\theta)^{-1} u,\Phi(\theta)+{\eps^2\over 2} B_\eps^+(\theta,v)[v,v], \hfill\cr\hfill {1\over\eps} B\left(\Phi(\theta)+{\eps^2\over 2} B_\eps^+(\theta,v)[v,v]\right) \varphi\left(\Phi(\theta)+{\eps^2\over 2} B_\eps^+(\theta,v)[v,v]\right)\biggr) \cr
   = a\left(-\,^t d\Phi(\theta)^{-1} u,\Phi(\theta), B\left(\Phi(\theta)\right) {\theta'\over\eps}\right) +\eps r_\eps{^{(2)}} (\theta,u,v)[v,v].
\hfill\cr}$$
The matrix $r_\eps^{(2)}$ is supported in a compact independent of $\eps$ in the variables $(u,\theta)$. Besides, the matrix~$r_\eps^{(2)}$  is smooth, bounded, and with bounded derivatives, uniformly in $\eps$,  as soon as the variable $\eps v$ is in a compact, which is the case on the domain of integration of the integral $I_\eps$.
Using Taylor formula on the Jacobian terms, we write 
$$\displaylines{
 a_\eps\left(-\,^t d\Phi(\theta)^{-1} u,\Phi(\theta)+{\eps^2\over 2} B_\eps^+(\theta,v)[v,v]\right)J_\Phi \left(\theta +\eps {v\over 2}\right)J_\Phi\left(\theta -\eps {v\over 2}\right) \hfill\cr
= a\left(-\,^t d\Phi(\theta)^{-1} u,\Phi(\theta), B\left(\Phi(\theta)\right) {\theta'\over\eps}\right) J_\Phi(\theta)^2+\eps r_\eps^{(2)} (\theta,u,v)[v,v]+ \eps r_\eps^{(1)} (\theta,u,v)\cdot v,\cr}$$
where the vector $ r_\eps^{(1)}$  is supported in a compact independent of $\eps$ in the variables $(u,\theta)$ and, as $r_\eps{^{(2)}}$,  is smooth, bounded, and with bounded derivatives, uniformly in $\eps$ on the domain of integration of the integral $I_\eps$ (where $\theta$ is in a compact and $\eps |v|\leq \delta$, $\delta$ to be chosen later).

\medskip 

 Denote by ${\mathcal U}_\eps$ the isometry of $L^2(\R^d)$ :
 $$f^\eps \mapsto  J_\Phi(\cdot)^{d\over 2} {\mathcal F}_\eps f\left(\Phi\left(\cdot \right)\right),$$
 then 
 $$\left(\op_\eps(a_\eps)f\;,\;f\right) = \left(\op_\eps \left(\widetilde a_\eps  \right) {\mathcal U}_\eps f\;,\;{\mathcal U}_\eps f\right)+\eps\left(R_\eps {\mathcal U}_\eps f\;,\;{\mathcal U}_\eps f\right),$$
  with 
 $$\widetilde a_\eps (u,\theta)= a\left(-\,^t d\Phi(\theta)^{-1} u,\Phi(\theta),B\left(\Phi(\theta)\right) {\theta'\over\eps}\right), $$
and 
 where $R_\eps$ is the operator of kernel 
 $$(\theta,\theta')\mapsto (2\pi\eps)^{-d} K_\eps \left({\theta+\theta'\over 2},{\theta-\theta'\over\eps}\right),$$
 with $K_\eps=K_\eps^{(1)}+K_\eps^{(2)}$, 
 $$\displaylines{K_\eps^{(1)}(\theta,v)=  \int_{\R^{d}} \left(r_\eps^{(1)}(\theta, u, v)\cdot v+ r_\eps^{(2)}(\theta,u,v) [v,v]\right)
   {\rm e}^{i u\cdot v +i\eps u\cdot   \,^t d\Phi(\theta)^{-1} B_\eps^-(\theta,v)[v,v]} du,\cr
   K_\eps^{(2)}(\theta,v) =  \int_{\R^{d}} a_\eps\left(^t d\Phi(\theta)^{-1} u,\Phi(\theta)+{\eps^2\over 2} B_\eps^+(\theta,v)[v,v]\right)
   {\rm e}^{i u\cdot v} \cr\hfill 
  \qquad  \times \,{1\over \eps} \left[  {\rm e}^{i\eps u\cdot   \,^t d\Phi(\theta)^{-1} B_\eps^-(\theta,v)[v,v]}-1 \right] du.\qquad
   \cr}$$
 The proof concludes by Schur lemma and the next result. 
\begin{lemma}\label{lem:estrest}
Let us fix $\delta$ small enough. Then, for any $j\in\{1,2\}$, there exists a constant $C_j>0$ such that for all $\eps>0$,
$$ \int_{\R^d} \sup_{\theta\in\R^d} |K_\eps^{(j)}(\theta,v) |dv\leq C_j.$$
\end{lemma}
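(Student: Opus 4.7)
The plan is to treat both kernels by the same non-stationary phase argument in the $u$-variable, exploiting the fact that the oscillating factor is $e^{iu\cdot w_\eps(\theta,v)}$ with $w_\eps$ a small perturbation of $v$ (of relative size $\eps|v|\le\delta$), and that the $u$-support of the amplitude is compact and its $u$-derivatives are uniformly bounded in $\eps$.

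First I would bring $K_\eps^{(2)}$ into the same structural form as $K_\eps^{(1)}$ via the elementary identity
$$\frac{1}{\eps}\bigl(e^{i\eps y}-1\bigr) = i y\int_0^1 e^{is\eps y}\,ds,$$
applied with $y = u\cdot {}^t d\Phi(\theta)^{-1} B_\eps^-(\theta,v)[v,v]$. Setting $\alpha(\theta,v) = {}^t d\Phi(\theta)^{-1} B_\eps^-(\theta,v)[v,v]$, which is bounded by $C|v|^2$ on the integration domain, and $w_s(\theta,v) = v + s\eps\alpha$, one obtains
$$K_\eps^{(2)}(\theta,v) = i\int_0^1\!\int_{\R^d} a_\eps(\cdots)\,(u\cdot\alpha)\, e^{iu\cdot w_s(\theta,v)}\,du\,ds,$$
with an amplitude of polynomial order $2$ in $v$ and a phase of the same shape as in $K_\eps^{(1)}$ (whose amplitude is of polynomial order at most $2$ in $v$ as well).

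Next I would fix $\delta$ small enough that, on the domain $\eps|v|\le\delta$, one has $|w_s(\theta,v)|\ge |v|/2$ uniformly in $s\in[0,1]$, $\theta$ in the compact projection of $\supp a$, and $\eps>0$; this follows from the uniform boundedness of $d\Phi$ and $B_\eps^\pm$ on that set. Then I would apply $N$ times the non-stationary phase operator $L = w_s\cdot\nabla_u/(i|w_s|^2)$, whose transpose is $-L$ since $w_s$ is independent of $u$. Each application costs one $u$-derivative of the amplitude and produces a factor $1/|w_s|\le 2/|v|$. The $u$-derivatives of the amplitude are uniformly bounded in $\eps$: in $K_\eps^{(1)}$, because $r_\eps^{(1)}$ and $r_\eps^{(2)}$ are by construction smooth with bounded derivatives on $\{\eps|v|\le\delta\}$; in $K_\eps^{(2)}$, because $\partial_u^\beta$ acting on $a_\eps(-{}^t d\Phi(\theta)^{-1}u,\ldots)$ reduces to $x$-derivatives of $a$, which are controlled by finitely many $\mathcal{A}$-semi-norms of $a$ independently of $\eps$. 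Combined with the crude pointwise bound (no integration by parts) valid for $|v|\le 1$, this yields for every $N\ge 0$
$$\sup_\theta|K_\eps^{(1)}(\theta,v)|\le C_N\,\frac{|v|+|v|^2}{\langle v\rangle^N},\qquad \sup_\theta|K_\eps^{(2)}(\theta,v)|\le C_N\,\frac{|v|^2}{\langle v\rangle^N}.$$

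Choosing $N>d+2$ makes each right-hand side integrable over $\R^d_v$, giving the announced estimates $\int_{\R^d}\sup_\theta|K_\eps^{(j)}(\theta,v)|\,dv\le C_j$. The main obstacle will be the careful bookkeeping of the $\eps$-uniformity: every auxiliary object ($B_\eps^\pm$, $\alpha$, $w_s$, the $r_\eps^{(j)}$, the $u$-derivatives of $a_\eps$) must be seen to have bounds depending only on finitely many $\mathcal{A}$-semi-norms of $a$ and on the geometry of $\Phi$, uniformly on $\{\eps|v|\le\delta\}$, which is precisely why the statement of the lemma starts by fixing $\delta$ small enough.
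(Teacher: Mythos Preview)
Your proposal is correct and follows essentially the same approach as the paper: the paper also rewrites $K_\eps^{(2)}$ via the identity $\eps^{-1}(e^{i\eps y}-1)=iy\int_0^1 e^{it\eps y}dt$, fixes $\delta$ so that the perturbed frequency satisfies $|v+\eps\,{}^t d\Phi(\theta)^{-1}B_\eps^-(\theta,v)[v,v]|\ge |v|/2$, and then integrates by parts in $u$ to gain arbitrary polynomial decay in $v$. Your write-up is in fact slightly more explicit about the non-stationary phase operator and the uniform-in-$\eps$ control of the $u$-derivatives of the amplitude.
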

Indeed, by this Lemma, we obtain that for all $\eps>0$
$$\displaylines{ (2\pi\eps)^{-d}\int_{\R^d} \sup_{\theta\in\R^d}  \left|K_\eps \left({\theta+\theta'\over 2},{\theta-\theta'\over\eps}\right)\right|d\theta'
= (2\pi)^{-d}\int_{\R^d} \sup_{\theta\in\R^d}  |K_\eps \left(\theta-\eps v,v\right)|dv\hfill\cr\hfill 
\leq 
(2\pi)^{-d}\int_{\R^d} \sup_{\theta\in\R^d}  |K_\eps \left(\theta,v\right)|dv
 \leq C_1+C_2,\cr}$$
and similarly 
$$\displaylines{ (2\pi\eps)^{-d}\int_{\R^d} \sup_{\theta'\in\R^d}  \left|K_\eps \left({\theta+\theta'\over 2},{\theta-\theta'\over\eps}\right)\right|d\theta
= (2\pi)^{-d}\int_{\R^d} \sup_{\theta'\in\R^d}  |K_\eps \left(\theta'+\eps v,v\right)|dv\hfill\cr\hfill 
\leq 
(2\pi)^{-d}\int_{\R^d} \sup_{\theta'\in\R^d}  |K_\eps \left(\theta',v\right)|dv
 \leq C_1+C_2,\cr}$$
 By Schur Lemma, these two inequalities yield the boundedness of $R_\eps$ uniformly in $\eps$ as an operator on $L^2(\R^d)$.

\medskip 

Let us now prove Lemma~\ref{lem:estrest}.

\begin{proof}
Note first that the functions $K_\eps^{(j)}$ are compactly supported in the variable $\theta$, uniformly in~$\eps$. We are going to prove that for any $N>0$, there exists a constant $C_{N,j}$ such that, for $| v|>1$, 
$$(1+|v|^2)^N \left| K_\eps^{(j)} (\theta,v)\right| \leq C_{N,j}.$$
These inequalities are enough to conclude as in the lemma. For proving these inequalities, we crucially use that the domain of integration in $u$ is compact and we shall gain the decrease in $v$ by using the oscillations inside the integral.

\medskip

Let us first focus on $K_\eps^{(1)}$. Since $\theta$ is in a compact and $B_\eps^-$ is bounded, we  have 
$$\left| v+\eps \,^td\Phi(\theta)^{-1} B_\eps^-(\theta,v) [v,v]\right|\geq  | v| -M\delta | v|$$
for  some constant $M$. 
Therefore, if $\delta M<1/2$, we have 
$$\left| v+\eps \,^td\Phi(\theta)^{-1} B_\eps^-(\theta,v) [v,v]\right|> {1\over 2} | v|,$$
 and, for $|v|>1$, integration by parts give 
 $$ \displaylines{ K_\eps^{(1)}(\theta,v)= 
  \int_{\R^{d}} \left| v+\eps \,^td\Phi(\theta)^{-1} B_\eps^-(\theta,v) [v,v]\right|^{-2N}\left( \Delta^N_u r_\eps^{(1)}(\theta, u, v)\cdot v+ \Delta^N_u r_\eps^{(2)} [v,v]\right)
\hfill\cr\hfill  
\times\,   {\rm e}^{i u\cdot v +i\eps u\cdot   \,^t d\Phi(\theta)^{-1} 
B_\eps^-{(\theta,v)[v,v]} }
du. 
 \cr}$$
Since $r_\eps^{(1)}$ and $r_\eps^{(2)}$ have smooth compactly supported derivatives in $u$, uniformly bounded in $\eps$, we obtain the existence of a constant $C_{N,1}$ such that 
$$|  K_\eps^{(1)}(\theta,v)| \leq |v|^{-2N} C_{N,1}.$$

\medskip 

Let us now study $K_\eps^{(2)}$ that we turn into 
$$\displaylines{
 K_\eps^{(2)}(\theta,v) =  i \int_0^1\int_{\R^{d}} u \, ^td\Phi(\theta)^{-1} B_\eps ^-(\theta,v) a_\eps\left(^t d\Phi(\theta)^{-1} u,\Phi(\theta)+{\eps^2\over 2} B_\eps^+(\theta,v)[v,v]\right)
 \hfill\cr\hfill \times \,   {\rm e}^{i u\cdot v 
+i t \eps u\cdot   \,^t d\Phi(\theta)^{-1} B_\eps^-(\theta,v)[v,v]} dudt.\cr}$$
Once written on this form, one can see that the arguments developed for $K_\eps^{(1)}$ apply again since the function 
$$u\mapsto u \, ^td\Phi(\theta)^{-1} B_\eps ^-(\theta,v) a_\eps\left(^t d\Phi(\theta)^{-1} u,\Phi(\theta)+{\eps^2\over 2} B_\eps^+(\theta,v)[v,v]\right)$$
is compactly supported in the variable~$u$, smooth and bounded with derivatives that are bounded uniformly in $\eps$. 
\end{proof}

\def\cprime{$'$}

\end{document}